\def\ps@pprintTitle{%
 \let\@oddhead\@empty
 \let\@evenhead\@empty
 \def\@oddfoot{\centerline{\thepage}}%
 \let\@evenfoot\@oddfoot}
\theoremstyle{plain}
\newtheorem{theorem}{Theorem}[section]
\newtheorem{lemma}{Lemma}[section]
\newtheorem{proposition}{Proposition}[section]
\theoremstyle{definition}
\newtheorem{definition}{Definition}[section]
\theoremstyle{remark}
\numberwithin{equation}{section}
\begin{document}

\begin{frontmatter}

\title{Vanishing Viscosity Limit of Short Wave-Long Wave Interactions in Planar Magnetohydrodynamics}


\author[mymainaddress]{Daniel R. Marroquin}
\ead{danielrm@impa.br}


\address[mymainaddress]{Instituto de Matematica Pura e Aplicada--IMPA, Estrada Dona Castorina, 110, Rio de Janeiro, RJ, 22460-320, Brazil}

\begin{abstract}
We study several mathematical aspects of a system of equations modelling the interaction between short waves, described by a nonlinear Schr\"{o}dinger equation, and long waves, described by the equations of magnetohydrodynamics for a compressible, heat conductive fluid. The system in question models an aurora-type phenomenon, where a short wave propagates along the streamlines of a magnetohydrodynamic medium. We focus on the one dimensional (planar) version of the model and address the problem of well posedness as well as convergence of the sequence of solutions as the bulk viscosity tends to zero together with some other interaction parameters, to a solution of the limit decoupled system involving the compressible Euler equations and a nonlinear Schr\"{o}dinger equation. The vanishing viscosity limit serves to justify the SW-LW interactions in the limit equations as, in this setting,  the SW-LW interactions cannot be defined in a straightforward way, due to the possible occurrence of vacuum.
\end{abstract}

\begin{keyword}
Compressible MHD planar equations\sep Nonlinear Schr\"{o}dinger equation\sep Vanishing viscosity
\MSC[2010] 76W05\sep 76N17\sep 35Q35\sep 35Q55
\end{keyword}

\end{frontmatter}


\section{Introduction}

Motivated by the work of Benney in \cite{Be}, Dias and Frid \cite{DFr} proposed a model describing Short Wave-Long Wave interactions, where the short waves are given by a nonlinear Schr\"{o}dinger equation and the long waves are governed by the Navier–Stokes equations for a compressible isentropic fluid. The model describes the evolution of the wave function, obeying a nonlinear Schr\"{o}dinger equation, along the streamlines of the fluid flow. As such it can be stated through the following nonlinear Schr\"{o}dinger equation
\begin{equation}
i \psi_t + \psi_{yy} = |\psi|^2 \psi + G,\label{Sch1}
\end{equation}
where $i$ is the imaginary unit, $\psi=\psi(t,y)$ is the wave function, $G$ is a potential due to possible external forces and $y$ is the Lagrangian coordinate associated to the velocity field $u$ of the fluid. 

The Lagrangian coordinate is characterized by being constant along particle paths and can be defined accordingly through the relation
\begin{equation}
y(t,\Phi(t,x))=y_0(x),\label{Lag1}
\end{equation}
where $x$ is the Eulearian coordinate (which provides the usual spatial description of the dynamics of the fluid), $y_0$ is a suitable diffeomorphism which can be chosen conveniently and $\Phi=\Phi(t,x)$ is the flow of the fluid given by
\begin{equation}
\begin{cases}
\frac{d\Phi}{dt}(t;x)=u(t,\Phi(t;x)),\\
\Phi(0;x)=x.
\end{cases}\label{Lag2}
\end{equation}

In this paper we are interested in a similar model, where the Lagrangian coordinate is no longer given by the Navier-Stokes equations, but by the longitudinal velocity of a compressible, heat conductive magnetohydrodynamic (MHD) flow, being our main goal to investigate the convergence of the sequence of solutions as the bulk viscosity of the fluid tends to zero together with some other interaction parameters; focusing on the one (space) dimensional case of the equations.

This is a highly non-trivial problem since in the limit equations, when the bulk viscosity is zero, solutions are not smooth enough and vacuum is expected to occur in finite time, even if this is not the case in the initial data, which makes it impossible to properly define the Lagrangian transformation. Aside from this, to our knowledge, the vanishing viscosity problem has not yet been addressed in the case of a heat-conductive magnetohydrodynamic flow. It was solved by Chen and Perepelitsa \cite{CP} in the case of an isentropic flow, but they did not include neither the thermal description nor the magnetohydrodynamic coupling.

The phenomenon that we have in mind when we study this model is one like that of the auroras. Auroras, commonly known as polar lights, occur as fast-moving charged particles released from the sun collide with the Earth's atmosphere, channelled by Earth's magnetic field. The stream of charged particles, called solar wind, consists mainly of electrons, protons and alpha particles that, upon reaching the earth's magnetosphere, collide with atoms in the atmosphere, such as oxygen and nitrogen, imparting energy into them and thus making them excited. As the atoms return to their normal state they release photons, and when many of these collisions occur together they emit enough light for the phenomenon to be visible by the naked eye.

The aurora can thus be seen as small waves propagating along the trajectories of the particles of the atmosphere, a magnetohydrodynamic medium. 

Let us recall that the MHD equations describe the motion of a conductive fluid in the presence of a magnetic field. On the other hand, the nonlinear Schr\"{o}dinger equation describes collective phenomena in quantum plasmas. The example of the aurora gathers many of the ingredients captured by this model, and our results here provide insights about the behaviour of the solutions in a low viscosity regime. The limit process also serves the purpose of legitimizing the SW-LW interactions in the limit equations as, in this setting, the SW-LW interactions cannot be defined in a straightforward way due to the lack of regularity of the solutions, as well as the possible occurrence of vacuum.

Let us point out that Dias and Frid's model has been studied in the 3-dimensional context by Frid, Pan and Zhang \cite{FrPZ}, proving global existence and uniqueness of smooth solutions with small data. Later, Frid, Jia and Pan \cite{FrJP} extended these results to the 3-dimensional model involving the MHD equations, instead of the Navier-Stokes equations, showing decay rates on top of the global existence and uniqueness of smooth solutions also with small data.

In order to state precisely our results, we have to consider the system of equations that describe the dynamics of the fluid along which the wave function propagates. To that end, we consider the planar magnetohydrodynamics equations for a compressible fluid flow
\begin{flalign}
 &\rho_t+(\rho u)_x=0,&\label{MHDrho}\\
 &(\rho u)_t + \Big(\rho u^2 +  p +\frac{\beta}{2}|\mathbf{h}|^2\Big)_x = (\varepsilon u_x)_x +F_{\text{ext}},&\label{MHDu}\\
&(\rho \mathbf{w})_t + (\rho u\mathbf{w} - \beta\mathbf{h})_x =(\mu \mathbf{w}_x)_x,&\label{MHDw}\\
 &\mathcal{E}_t + \left( u\left(\mathcal{E} + p + \tfrac{\beta}{2}|\mathbf{h}|^2 \right) -\beta\mathbf{w}\cdot\mathbf{h}\right)_x &\nonumber  \\
  &\hspace{25mm}= (\kappa \theta_x)_x+(\varepsilon uu_x + \mu\mathbf{w}\cdot\mathbf{w}_x +\nu \mathbf{h}\cdot\mathbf{h}_x)_x , &\label{MHDE}\\
 &\beta\mathbf{h}_t+(\beta u\mathbf{h} - \beta\mathbf{w})_x=(\nu\mathbf{h}_x)_x.&\label{MHDh}
\end{flalign}

Here, $\rho\geq0$, $u\in\mathbb{R}$, $\mathbf{w}\in\mathbb{R}^2$ and $\theta\geq 0$ denote the fluid's density, longitudinal velocity, transverse velocity and temperature, respectively, $\mathbf{h}\in\mathbb{R}^2$ stands for the magnetic field; the total energy is
\[
\mathcal{E}:=\rho\left(e+\frac{1}{2}u^2+\frac{1}{2}|\mathbf{w}|^2\right)+\frac{\beta}{2}|\mathbf{h}|^2,
\]
with $e$ being the internal energy and $\frac{1}{2}|\mathbf{h}|^2$ the magnetic energy; $p$ denotes the pressure.

Furthermore, $\varepsilon>0$ is the bulk viscosity and $\mu>0$ the shear viscosity; $\kappa$ is the heat conductivity, $\nu>0$ is the magnetic difusivity and $\beta>0$ is the magnetic permeability.

The pressure and the internal energy, in general, depend on the density and the temperature through constitutive relations of the form
\[
p=p(\rho,\theta),\hspace{10mm}e=e(\rho,\theta),
\]
and must satisfy Maxwell's relation
\begin{equation}
e_\rho(\rho,\theta)=\frac{1}{\rho^2}(p-\theta p_\theta(\rho,\theta)).\label{MR}
\end{equation}

The planar MHD equations are deduced from the full three dimensional ones under the assumption that the flow moves in a preferential direction (the longitudinal direction) and is uniform in the transverse direction. This is translated into the equations by imposing that the partial derivatives with respect to the second and third spatial coordinates of the involved functions are equal to zero. Then, decomposing the velocity field as $(u,\mathbf{w})$ and the magnetic field as $(h_1,\mathbf{h})$, a straightforward calculation shows that $h_1$ is a constant, which can be assumed to be equal to $1$, and that the resulting equations are those above (see the appendix in \cite{CW}).

Coming back to the model under consideration, we consider the nonlinear Schr\"{o}dinger equation \eqref{Sch1} stated in the Lagrangian coordinate $y$ associated to the longitudinal velocity $u$, satisfying system \eqref{MHDrho}-\eqref{MHDh}. 

Note that, in this one dimensional setting, the Lagrangian coordinate $y=y(t,x)$ can be defined through the relations
\begin{equation}
y_x=\rho,\hspace{10mm}y_t=\rho u, \hspace{10mm}y(0,x)=\int_0^x \rho_0(z)dz,\label{Lag3}
\end{equation}
where $\rho_0(x)=\rho(0,x)$ is the initial density. Indeed, in light of equation \eqref{MHDrho}, $y$ is well defined and satisfies \eqref{Lag1} and \eqref{Lag2} with
\[
y_0(x)=\int_0^x \rho_0(z)dz.
\]

With this, the model is completed by choosing the external force $F_{\text{ext}}$ in \eqref{MHDu} and the potential $G$ in \eqref{Sch1} as
\begin{equation}
 F_{\text{ext}}=\alpha \Big(g'(1/\rho)h(|\psi\circ Y|^2)\Big)_x,\hspace{10mm}G=\alpha g(v)h'(|\psi|^2),\label{force}
\end{equation}
where $\alpha>0$ is the interaction coefficient, $Y(t,x)=(t,y(t,x))$ is the Lagrangian transformation, $v(t,y)$ is the \textit{specific volume} defined by
\begin{equation}
 v(t,y(t,x))=\frac{1}{\rho(t,x)},\label{SV}
\end{equation}
and $g,h:[0,\infty)\to[0,\infty)$ are nonnegative smooth functions.

Thus, we are left with the following system
\begin{flalign}
 &\rho_t+(\rho u)_x=0,&\label{E1rho}\\
 &(\rho u)_t + \Big(\rho u^2 +  p +\frac{\beta}{2}|\mathbf{h}|^2 - \alpha g'(1/\rho)h(|\psi\circ \mathbf{Y}|^2)\Big)_x = (\varepsilon u_x)_x ,&\label{E1u}\\
&(\rho \mathbf{w})_t + (\rho u\mathbf{w} - \beta\mathbf{h})_x =(\mu \mathbf{w}_x)_x,&\label{E1w}\\
 &\mathcal{E}_t + \left( u\left(\mathcal{E} + p + \tfrac{\beta}{2}|\mathbf{h}|^2 \right) -\beta\mathbf{w}\cdot\mathbf{h}\right)_x &\nonumber  \\
  &\hspace{5mm}= (\varepsilon uu_x + \mu\mathbf{w}\cdot\mathbf{w}_x +\nu \mathbf{h}\cdot\mathbf{h}_x+\kappa \theta_x)_x +\alpha \Big(g'(1/\rho)h(|\psi\circ \mathbf{Y}|^2) \Big)_x u, &\label{E1E}\\
 &\beta\mathbf{h}_t+(\beta u\mathbf{h} - \beta\mathbf{w})_x=(\nu\mathbf{h}_x)_x,&\label{E1h}\\
 &i\hspace{0.5mm}\psi_t+\psi_{yy}=|\psi|^2\psi+\alpha g(v)h'(|\psi|^2)\psi. & \label{E1Sch}
\end{flalign}

The most important feature of this coupling is that it is endowed with an energy identity which can be stated in differential form as
\begin{align}
 &\Bigg\{ \mathcal{E}_t + \left( u\left(\mathcal{E} + p + \tfrac{\beta}{2}|\mathbf{h}|^2 \right) -\beta\mathbf{w}\cdot\mathbf{h}\right)_x\nonumber \\
 &\hspace{20mm}-(\varepsilon uu_x + \mu\mathbf{w}\cdot\mathbf{w}_x +\nu \mathbf{h}\cdot\mathbf{h}_x+\kappa \theta_x)_x\Bigg\}dx\nonumber \\
 &\hspace{5mm}=\Bigg\{ (\overline{\psi}_t \psi_y + \psi_t\overline{\psi}_y)_y -  \Big(\tilde{\alpha}g(v) h(|\psi|^2) + \tfrac{1}{2} |\psi_y|^2 + \tfrac{1}{4}|\psi|^4\Big)_t \Bigg\}dy.\label{difE1}
\end{align}

Indeed, this identity can be easily deduced by multiplying \eqref{E1Sch} by $\overline{\psi}_t$ (the complex conjugate of $\psi_t$), taking real part, adding the resulting equation to the energy equation \eqref{E1E} and using relations \eqref{Lag3} in order to deal with the change of variables.

In particular, under suitable integrability conditions, this identity yields an integral form of the conservation of energy:
\begin{align*}
 &\frac{d}{dt}\int \mathcal{E}dx+\frac{d}{dt}\int \left( \frac{1}{2} |\psi_y(t,y)|^2  + \frac{1}{4}|\psi(t,y)|^4 + \alpha g(v(t,y)) h(|\psi(t,y)|^2) \right)dy=0.
\end{align*}

Now, for these calculations to hold and for the Lagrangian transformation $Y(t,\cdot)$ to actually be a change of variables, we need that the density $\rho(t,\cdot)$ be strictly positive for all $t\geq 0$. That is, we cannot admit the occurrence of vacuum.

Throughout this paper we are going to assume that the pressure can be decomposed into an elastic part, that depends only on the density, and a thermal part, that depends linearly on the temperature. Under this constraint we prove that the sequence of solutions to the system above converge to a weak solution of the limit system as the bulk viscosity tends to zero together with some other interaction parameters (specifically the thermal part of the pressure, the magnetic permeability and the interaction coefficient). For this, we develop some new uniform estimates that allow us to adapt and apply to our case the compactnes framework by Chen and Perepelitsa in \cite{CP} on the vanishing viscosity problem for the Navier Stokes equations. Through said estimates we are able to establish a certain rate at which these interactions parameters should tend to zero and with some careful analysis we manage to include the thermal description and the MHD coupling in the vanishing viscosity scheme.

As aforementioned, in the limit equations, the Lagrangian transformation cannot be properly defined. However, we can establish a relation between the limit velocity and the limit coordinate through the limit process so that the limit coordinate may be considered as a Lagrangian coordinate in a generalized sense.

Of course, before talking about the convergence of the sequence of solutions, we have to guarantee that system \eqref{E1rho}-\eqref{E1Sch} is well posed. In this direction, we are able to prove global existence and uniqueness of smooth solutions in a bounded open spacial domain $\Omega$. 

The rest of this paper is organized as follows. In Section \ref{results} we state precisely our results. In Section \ref{solutions} we prove the well posedness of system \eqref{E1rho}-\eqref{E1Sch}. Moving on to the vanishing viscosity problem, in section \ref{limit} we consider the limit equations and give an outline of the methods used in our compactness analysis. Section \ref{estimates} is devoted to the new uniform estimates that we develop, which allow us to adapt Chen and Perepelitsa's scheme. Finally in Section \ref{process} we explain the limit process.

\section{Main results}\label{results}

Throughout this work we assume that the pressure can be decomposed into an elastic part and a thermal part. More specifically, we consider a constitutive relation for the pressure of the form
\begin{equation}
p(\rho,\theta)=p_e(\rho)+  \theta p_\theta(\rho),\label{pe+ptet}
\end{equation}
where the elastic part $p_e$ is given by a $\gamma$-law:
\begin{equation}
p_e(\rho)=a\rho^\gamma,
\end{equation}
for some $a>0$ and $\gamma>1$. 

Concerning the thermal part of the pressure we assume that $p_\theta$ satisfies the following conditions:
\begin{equation}
\begin{cases}
p_\theta \in C[0,\infty)\cap C^1(0,\infty), &p_\theta(0)=0\\
p_\theta \text{ is a nondecreasing function of }\rho \in[0,\infty]\\
p_\theta(\rho)\leq p_0(1+\rho^\Gamma), &\text{for all }\rho\geq 0,
\end{cases}\label{ptet}
\end{equation}
for some $p_0 \geq 0$ and $\Gamma \leq \frac{\gamma}{2}$.

This choice of constitutive relation agrees with the one considered by Feireisl in \cite{Fe}, where he proves existence of weak solutions to the full multidimensional Navier-Stokes equations, and we refer to it for a wide discussion on its physical relevance.

Now, according to Maxwell's relation \eqref{MR} the internal energy can be written in the form
\begin{equation}
e(\rho,\theta)=P_e(\rho) + Q(\theta),\label{defe}
\end{equation}
with $P_e$ given by
\begin{equation}
P_e(\rho)=\frac{a}{\gamma-1}\rho^{\gamma-1},\label{elpot}
\end{equation}
and $Q(\theta)$ given by
\begin{equation}
Q(\theta)=\int_0^\theta C_\vartheta(z)dz,
\end{equation}
where $C_\vartheta(\theta):=\partial e/\partial \theta$ is the specific heat at constant volume, which depends only on the temperature.

Regarding the function $C_\vartheta$ we assume that:
\begin{equation}
\begin{cases}
C_\vartheta \in C^1[0,\infty), &\inf_{z\in [0,\infty)}C_\vartheta (z)>0 \\
e_1(1+\theta^r)\leq C_\vartheta(\theta)\leq e_2 (1+\theta^r), &
\end{cases}\label{Q}
\end{equation}
where $r\in[0,1]$ and $e_1$ and $e_2$ are appropriate positive constants.

Let us point out that, in view of \eqref{pe+ptet} and \eqref{defe}, the energy equation \eqref{E1E} is equivalent to
\begin{equation}
(\rho Q(\theta))_t + (\rho Q(\theta)\mathbf{u})_x+\theta p_\theta(\rho)u_x =(\kappa \theta_x)_x+ \varepsilon u_x^2 + \mu |\mathbf{w}_x|^2 + \nu|\mathbf{h}_x|^2.\label{E1Q}
\end{equation}

For later reference, let us also introduce the specific entropy $s=s(\rho,\theta)$ through the thermodynamic relations
\begin{equation}
\theta s_\rho = e_\rho - \frac{p}{\rho^2},\hspace{10mm}\theta s_\theta=e_\theta,\label{entropydef}
\end{equation}
that is
\begin{equation}
s(\rho,\theta)=\int_1^\theta \frac{C_\vartheta(z)}{z}dz-\int_1^\rho \frac{p_\theta(z)}{z^2}dz.\label{entropy}
\end{equation}

In addition to this, as in \cite{Fe,CW,W}, we have to impose that the heat conductivity coefficient $\kappa$ depend on the temperature and obey some growth conditions for our results to hold. Specifically, we assume that $\kappa=\kappa(\theta)$ satisfies:
\begin{equation}
\begin{cases}
\kappa \in C^2([0,\infty)) &\\
k_1(1+\theta^q)\leq \kappa(\theta)\leq k_2 (1+\theta^q), &\text{for all }\theta\geq 0\\
\kappa_\theta(\theta)\leq k_2 (1+\theta^{q'}), & \text{for all } \theta\geq 0.
\end{cases}\label{K}
\end{equation} 
Here, $k_1>0$, $q \geq 2+2r$, $q'\geq 0$ and $r$ is the same as in (\ref{Q}).

Finally we impose some conditions on the functions involved in the coupling describing the short wave-long wave interaction
\begin{equation}
\begin{cases}
g,h:[0,\infty)\to [0,\infty), \text{ smooth with } g(0)=h(0)=0,\\
 \text{supp}\hspace{.5mm} g' \text{ compact in }(0,\infty),\\
 \text{supp}\hspace{.5mm} h' \text{ compact in } [0,\infty).
\end{cases}\label{gh}
\end{equation} 

Under these conditions we consider the initial-boundary value problem for the system (\ref{E1rho})-(\ref{E1Sch}) in a bounded spatial domain $\Omega$, which we can assume to be $(0,1)$ without loss of generality, with the following initial and boundary conditions
\begin{equation}
\begin{cases}
(\rho,u,\mathbf{w},\mathbf{h},\theta)|_{t=0}=(\rho_0,u_0,\mathbf{w}_0,\mathbf{h}_0,\theta_0)(x), &x\in \Omega,\\
\psi|_{t=0}=\psi_0(y), &y\in \Omega_y,\\
(u,\mathbf{w},\mathbf{h},\theta_x)|_{\partial\Omega}=0, \hspace{10mm}\psi|_{\partial \Omega_y}=0, &t>0,
\end{cases}\label{E10}
\end{equation}
where the initial data satisfies the respective compatibility conditions. Here $\Omega_y$ is the domain of the Lagrangian coordinate.

Then, we have the following result.

\begin{theorem}\label{thm2}
Supose that there are positive constants $m<M$ such that
\begin{align}
&m \leq \rho_0(x),\theta_0(x) \leq M, &x\in\Omega,\label{cotas0}
\end{align}
and that
\begin{equation}
\rho_0,u_0,\mathbf{w}_0,\mathbf{h}_0,\theta_0\in H^1(\Omega),\hspace{5mm}\psi_0\in H^2(\Omega_y;\mathbb{C}),\label{reg0}
\end{equation}
and $\rho_0\in W^{1,\infty}(\Omega)$. Then, problem (\ref{E1rho})-(\ref{E1Sch}), (\ref{E10}) has a unique global solution $(\rho,u,\mathbf{w},\mathbf{h},\theta)(t,x)$, $\psi(t,y)$ such that for any fixed $T>0$
\begin{align*}
&\rho\in L^\infty(0,T;H^1(\Omega)\cap W^{1,\infty}(\Omega)),\\
&(u,\mathbf{w},\mathbf{h})\in L^\infty(0,T;H_0^1(\Omega))\cap L^2(0,T;H^2(\Omega)),\\
&\theta \in L^\infty(0,T;H^1(\Omega)),\hspace{5mm}\theta_y\in L^2(0,T;H_0^1(\Omega))\\
&\psi\in L^\infty(0,T;H_0^1(\Omega_y;\mathbb{C})\cap H^2(\Omega_y;\mathbb{C})).
\end{align*}
Also, for each $(t,x)\in [0,T]\times\Omega$ we have
\[
C^{-1}\leq \rho(t,x), \theta(t,x)\leq C,,
\]
where $C>0$ is a constant depending only on $T,m,M$ and the initial data.
\end{theorem}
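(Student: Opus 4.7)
The plan is to first establish local-in-time existence via a fixed-point iteration and then derive a priori estimates that allow us to continue the solution for all times. Since the Lagrangian transformation $Y$ is well-defined and bi-Lipschitz as long as $\rho$ stays uniformly bounded above and away from zero, the main point of the a priori estimates is to preserve these pointwise bounds together with the $H^1$-regularity of the hydrodynamic variables and the $H^2$-regularity of $\psi$.

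For local existence, I would linearize the coupled system by freezing the coefficients and source terms: given $(\tilde\rho,\tilde u,\tilde{\mathbf{w}},\tilde{\mathbf{h}},\tilde\theta,\tilde\psi)$ in a suitable closed ball of a space $X_T:=C([0,T];H^1)\times\cdots\times C([0,T];H^2)$, solve the transport equation \eqref{E1rho} for $\rho$ by characteristics (using $\rho_0\in W^{1,\infty}$), the quasilinear parabolic equations \eqref{E1u}--\eqref{E1h} for $(u,\mathbf{w},\mathbf{h},\theta)$ by standard linear parabolic theory with the linearized version of \eqref{E1Q} used for $\theta$, and the linear Schr\"odinger equation \eqref{E1Sch} for $\psi$ via the standard semigroup/Strichartz framework. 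The compact supports of $g'$ and $h'$ in \eqref{gh} render the interaction terms bounded and Lipschitz in the relevant norms, so the map is a contraction on a sufficiently short time interval, producing a unique local solution with the claimed regularity.

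For global continuation the energy identity \eqref{difE1}, combined with the $L^2$-conservation law $\|\psi(t)\|_{L^2}=\|\psi_0\|_{L^2}$ satisfied by \eqref{E1Sch}, yields baseline $L^\infty_tL^2_x$-bounds for $\rho u^2$, $\rho|\mathbf{w}|^2$, $|\mathbf{h}|^2$, $\rho e$ and $\|\psi_y\|_{L^2}$. The crucial pointwise bounds on $\rho$ are then obtained via a Kazhikhov--Shelukhin/Zlotnik-type representation of the effective viscous flux $\varepsilon u_x - p - \tfrac{\beta}{2}|\mathbf{h}|^2 + \alpha g'(1/\rho)h(|\psi\circ Y|^2)$, where the uniform $L^\infty$-bound on $h(|\psi\circ Y|^2)$ coming from the compact support of $h'$ makes the interaction contribution harmless. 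The upper and lower bounds for $\theta$ then follow from a maximum-principle argument applied to the internal-energy equation \eqref{E1Q}, exploiting the growth \eqref{K} of $\kappa(\theta)$ with $q\ge 2+2r$; this is exactly the framework developed by Chen--Wang \cite{CW} for planar heat-conductive MHD, which the Schr\"odinger source contributes to only through the already-controlled bounded term $\alpha(g'(1/\rho)h(|\psi\circ Y|^2))_x u$. Once the pointwise bounds on $\rho$ and $\theta$ are in place, $H^1$-estimates on $(u,\mathbf{w},\mathbf{h},\theta)$ and $W^{1,\infty}$-control on $\rho$ follow by differentiating and performing standard energy estimates, while the $H^2$-bound on $\psi$ is obtained by differentiating \eqref{E1Sch} in $y$ and using the already established $L^\infty$-bounds on the specific volume $v=1/\rho$ and on $|\psi|^2$.

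Uniqueness is then proved by a direct energy estimate for the difference of two solutions, which yields a Gr\"onwall inequality thanks to the locally Lipschitz dependence of all nonlinearities once $\rho$ is bounded away from $0$ and $\infty$. The hardest step, and the one that will require the most care, is controlling the terms involving the Lagrangian transformation $Y=Y(t,x)$ itself: $Y$ depends on $\rho$ and $u$ through \eqref{Lag3}, so every time $\psi\circ Y$ appears one must quantify the continuous dependence of $Y$ on the fluid unknowns in a norm compatible with the Schr\"odinger estimates for $\psi$. This is precisely where the $W^{1,\infty}$-regularity of $\rho_0$ and the strict positive lower bound on $\rho$ are essential, since together they ensure that $Y(t,\cdot)$ stays uniformly bi-Lipschitz in $x$ on $[0,T]$ and makes the composition $\psi\circ Y$ inherit the $H^2_y$ bounds on $\psi$ after a change of variables.
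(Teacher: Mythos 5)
Your proposal is correct in outline and rests on the same core a priori estimates as the paper (the energy identity \eqref{difE1}, a Kazhikhov--Shelukhin-type representation giving pointwise bounds on the density, the Chen--Wang machinery for the upper and lower temperature bounds, and a Gronwall argument for uniqueness), but it takes a genuinely different route in two respects. First, the paper does not work in Eulerian coordinates at all: it rewrites the entire system in the Lagrangian variable $y$ via \eqref{Lag3}, obtaining \eqref{L1v}--\eqref{L1Sch}, so that the Schr\"odinger equation and the fluid equations live on the same domain and the coupling terms become the purely local expressions $g'(v)h(|\psi|^2)$ and $g(v)h'(|\psi|^2)$ --- the composition $\psi\circ Y$ never appears. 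What you identify as the hardest step of your plan (quantifying the dependence of $Y$ on $(\rho,u)$ in norms compatible with the Schr\"odinger estimates, and propagating $H^2_y$ bounds through the composition) is precisely the difficulty that this change of variables is designed to eliminate; in the paper the only price is the routine verification at the end that $v$ is bounded above and away from zero, so the transformation can be inverted. Your Eulerian route is in principle workable given $\psi_0\in H^2$, but note that the contraction estimate for the map $(\tilde\rho,\tilde u,\dots)\mapsto\psi\circ Y$ loses a derivative (Lipschitz dependence of $\psi\circ Y$ in $H^1$ on $Y$ in $W^{1,\infty}$ requires $\psi\in H^2$), and you leave this step unexecuted. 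Second, for local existence the paper uses a Faedo--Galerkin scheme with trigonometric bases adapted to the boundary conditions, closing a differential inequality for the $H^1$-norms of the approximants, rather than your linearization-plus-contraction scheme; both are standard, but the Galerkin approach sidesteps the need to set up the delicate fixed-point space containing the composition. The global continuation and uniqueness parts of your plan match the paper's.
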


Having well posedness for system \eqref{E1rho}-\eqref{E1Sch} we can move on to the vanishing viscosity problem. To that end, we introduce a new artificial small parameter $\delta$ multiplying the thermal part of the pressure. That is, we substitute relation \eqref{pe+ptet} by  
\begin{equation}
p(\rho,\theta)=a\rho^\gamma + \delta\theta p_\theta(\rho),\label{deltape+ptet}
\end{equation}
where $\delta$ is some positive constant. This certainly agrees with our previous assumptions and Theorem \ref{thm2} continues to hold.

Note, however, that if we take $\varepsilon=\alpha=\beta=\delta=0$ then we are left with a decoupled system involving the compressible one dimensional Euler Equations and the nonlinear Schr\"{o}dinger equation. Namely,
\begin{flalign}
 &\rho_t+(\rho u)_x=0,&\label{E1rhoinfty}\\
 &(\rho u)_t + (\rho u^2 +  a\rho^\gamma )_x=0 ,&\label{E1uinfty}\\
&(\rho \mathbf{w})_t + (\rho u\mathbf{w} )_x =(\mu \mathbf{w}_x)_x,&\label{E1winfty}\\
&(\rho Q(\theta))_t + (\rho Q(\theta)\mathbf{u})_x=(\kappa \theta_x)_x + \mu |\mathbf{w}_x|^2 + \nu|\mathbf{h}_x|^2, &\label{E1Qinfty}\\
 &(\nu\mathbf{h}_x)_x=0,&\label{E1hinfty}\\
 &i\hspace{0.5mm}\psi_t+\psi_{yy}=|\psi|^2\psi. & \label{E1Schinfty}
\end{flalign}

Our next task is to study this system and its relation with our original viscous system \eqref{E1rho}-\eqref{E1Sch}. More precisely, based on some new uniform estimates, we show that the sequence of solutions to the viscous system, given by Theorem \ref{thm2}, converges to a weak solution of the limit problem above as $(\varepsilon,\alpha,\beta,\delta)\to 0$. 

Said estimates pose, as will be shown later, some restriction in the way that these coefficients vanish. Namely, $\alpha = o(\varepsilon^{1/2})$, $\beta = o(\varepsilon)$ and $\delta = o(\varepsilon)$ as $\varepsilon \to 0$. As such, we can, for simplicity, consider $\alpha$, $\beta$ and $\delta$ as functions of $\varepsilon$ and consider a sequence of solutions $(\rho^\varepsilon,u^\varepsilon,\mathbf{w}^\varepsilon,\mathbf{h}^\varepsilon,\theta^\varepsilon,\psi^\varepsilon)$ to \eqref{E1rho}-\eqref{E1Sch} with initial data $(\rho_0^\varepsilon,u_0^\varepsilon,\mathbf{w}_0^\varepsilon,\mathbf{h}_0^\varepsilon,\theta_0^\varepsilon,\psi_0^\varepsilon)$. With this notation, our main result reads as follows.

\begin{theorem}\label{vanish}
Let the initial functions $(\rho_0^\varepsilon,u_0^\varepsilon,\mathbf{w}_0^\varepsilon,\mathbf{h}_0^\varepsilon,\theta_0^\varepsilon,\psi_0^\varepsilon)$ be smooth and satisfy the following conditions:
\begin{itemize}
\item[(i)] $\rho_0^\varepsilon\geq c_0^\varepsilon$, $M_0^{-1}\leq\int_\Omega \rho_0^\varepsilon dx \leq M_0$, $\int_\Omega \rho_0^\varepsilon |u_0^\varepsilon|^2 dx\leq M_0$, $-\int_\Omega\rho_0^\varepsilon s(\rho_0^\varepsilon,\theta_0^\varepsilon)dx\leq M_0$, for some $M_0$ independent of $\varepsilon$ and some $c_0^\varepsilon>0$;
\item[(ii)] $\int_\Omega (\rho_0^\varepsilon(|u_0^\varepsilon|^2+|\mathbf{w}_0^\varepsilon|^2)+|\beta\mathbf{h}_0^\varepsilon|^2)dx + \int_{\Omega_y}(|\psi_{0y}^\varepsilon|^2+|\psi_0^\varepsilon|^2 )dy\leq M_0$;
\item[(iii)] $\varepsilon^2\int_\Omega |\rho_{0x}^\varepsilon|^2|\rho_{0}^\varepsilon|^{-3}dx+\varepsilon\beta^2 \int_\Omega |\mathbf{h}_0^\varepsilon|^2(\rho_0^\varepsilon)^{-1}dx \leq M_0$;
\item[(iv)] $(\rho_0^\varepsilon,\rho_0^\varepsilon u_0^\varepsilon)\to (\rho_0,\rho_0 u_0)$ as $\varepsilon\to 0$ in the sense of distributions, with $\rho_0\geq 0$ a.e. 
\item[(v)] $\rho_0^\varepsilon\mathbf{w}_0^\varepsilon\to \rho_0\mathbf{w}_0$ and $\beta\mathbf{h}_0^\varepsilon\to 0$ in the sense of distributions;
\item[(vi)] $\rho_0^\varepsilon Q(\theta_0^\varepsilon)\to \rho_0 Q(\theta_0)$  in the sense of distributions.
\item[(vii)] $\psi_0^\varepsilon\to\psi_0$ in $H_0^1(\Omega_y)$.
\end{itemize}

Let $(\rho^\varepsilon,u^\varepsilon,\mathbf{w}^\varepsilon,\mathbf{h}^\varepsilon,\theta^\varepsilon,\psi^\varepsilon)$ be the solution of \eqref{E1rho}-\eqref{E1Sch} with $p$ given by \eqref{deltape+ptet} and with initial data $(\rho_0^\varepsilon,u_0^\varepsilon,\mathbf{w}_0^\varepsilon,\mathbf{h}_0^\varepsilon,\theta_0^\varepsilon,\psi_0^\varepsilon)$. Assume, further that $\alpha=o(\varepsilon^{1/2})$, $\beta=o(\varepsilon)$ and $\delta=o(\varepsilon)$. Then, we may extract a subsequence (not relabelled) of $(\rho^\varepsilon,u^\varepsilon,\mathbf{w}^\varepsilon,\mathbf{h}^\varepsilon,\theta^\varepsilon,\psi^\varepsilon)$ such that as $\varepsilon\to 0$ we have
\begin{itemize}
\item $(\rho^\varepsilon,\rho^\varepsilon u^\varepsilon)$ converges in $L_{loc}^1(\Omega\times (0,\infty))$ to a \textbf{finite-energy entropy solution} $(\rho,\rho u)$ of the compressible Euler equations \eqref{E1rhoinfty}, \eqref{E1uinfty} with initial data $(\rho_0,\rho_0 u_0)$;
\item $(\mathbf{w}^\varepsilon,\mathbf{h}^\varepsilon)\to (\mathbf{w},0)$ weakly in $L^2(0,T;H_0^1(\Omega))$ and $(\rho,\rho u,\mathbf{w})$ solve equation \eqref{E1winfty} in the sense of distributions with initial data $\rho_0 \mathbf{w}_0$ attained also in the sense of distributions;
\item $\psi^\varepsilon\to\psi$ strongly in $L^\infty(0,T;L^4(\Omega))$ and weakly-* in $L^\infty(0,T;H_0^1(\Omega))$, where $\psi$ is the unique \textbf{weak solution} of equation \eqref{E1Schinfty} with initial data $\psi_0$;
\item $\rho^\varepsilon Q(\theta^\varepsilon)$ converges strongly to $\rho Q(\theta)$ in $L_{loc}^1(\Omega\times(0,\infty))$ and $(\rho, \rho u,\mathbf{w}, \theta)$ constitute a \textbf{variational solution} of equation \eqref{E1Qinfty}.
\end{itemize}
\end{theorem}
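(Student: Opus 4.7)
The plan is to obtain the limit $(\rho,u,\mathbf{w},\theta,\psi)$ as a weak/distributional limit of a subsequence of the viscous solutions provided by Theorem \ref{thm2}, relying on the Chen--Perepelitsa compensated-compactness machinery \cite{CP} for the isentropic pair $(\rho,\rho u)$ and softer Aubin--Lions-type arguments for the remaining variables. The entire scheme hinges on obtaining $\varepsilon$-uniform estimates from the energy identity \eqref{difE1} and from the entropy form \eqref{E1Q} of the energy equation; the scaling $\alpha=o(\varepsilon^{1/2})$, $\beta=o(\varepsilon)$, $\delta=o(\varepsilon)$ is dictated precisely by the requirement that the MHD, thermal and Schr\"odinger couplings appear as lower-order perturbations in these estimates.

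\textbf{Step 1 (uniform estimates).} Integration of \eqref{difE1} in space-time combined with hypotheses (i)--(iii) would yield the natural energy bound controlling the kinetic, magnetic, internal and Schr\"odinger energies, together with the dissipation $\int_0^T\!\int_\Omega(\varepsilon|u^\varepsilon_x|^2+\mu|\mathbf{w}^\varepsilon_x|^2+\nu|\mathbf{h}^\varepsilon_x|^2+\kappa|\theta^\varepsilon_x|^2/(\theta^\varepsilon)^2)\,dx\,dt\leq C$ coming from the entropy balance (after multiplying \eqref{E1Q} by $1/\theta$). A Bresch--Desjardins-type bound $\varepsilon^2\int|\rho^\varepsilon_x|^2(\rho^\varepsilon)^{-3}dx\leq C$ would propagate from (iii) via the effective viscous flux. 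From these I would then derive the higher integrability of $\rho^\varepsilon$ (an $L^{\gamma+\vartheta}_{loc}$ bound for some $\vartheta>0$) along the lines of \cite{CP}, keeping careful track of the new contributions from $\tfrac{\beta}{2}|\mathbf{h}^\varepsilon|^2$, $\delta\theta^\varepsilon p_\theta(\rho^\varepsilon)$ and the interaction force $\alpha g'(1/\rho^\varepsilon)h(|\psi^\varepsilon\circ Y^\varepsilon|^2)$; the prescribed smallness rates for $\alpha,\beta,\delta$ are exactly what is required to absorb these perturbations.

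\textbf{Step 2 (compensated compactness).} With the bounds in hand I would verify, for each weak entropy pair $(\eta,q)$ of the isentropic system with $p_e=a\rho^\gamma$, that $\eta(\rho^\varepsilon,\rho^\varepsilon u^\varepsilon)_t+q(\rho^\varepsilon,\rho^\varepsilon u^\varepsilon)_x$ decomposes as a term compact in $H^{-1}_{loc}$ plus a term bounded in $W^{-1,p_1}_{loc}$ for some $p_1>2$. The isentropic contributions are handled as in \cite{CP}; the new contributions from the magnetic pressure, the thermal pressure and the interaction force must be shown to be $o(1)$ in the appropriate negative Sobolev norm using Step 1. Murat's lemma together with the DiPerna/Lions--Perthame--Tadmor/Chen--LeFloch commutation relation then produces almost-everywhere convergence of $(\rho^\varepsilon,u^\varepsilon)$ on $\{\rho>0\}$, from which the finite-energy entropy-solution property of the limit follows.

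\textbf{Step 3 (remaining variables) and main obstacle.} The parabolic equations for $\mathbf{w}^\varepsilon$ and $\mathbf{h}^\varepsilon$ give $\mathbf{w}^\varepsilon\rightharpoonup\mathbf{w}$ and $\mathbf{h}^\varepsilon\rightharpoonup 0$ in $L^2(0,T;H_0^1)$, and the strong convergence of $(\rho^\varepsilon,\rho^\varepsilon u^\varepsilon)$ from Step 2 lets me pass to the limit in the nonlinear fluxes of \eqref{E1w}. For $\psi^\varepsilon$, the $L^\infty_tH^1_y$ estimate together with \eqref{E1Sch} yields compactness in $C([0,T];L^2_y)$, hence in $L^\infty_tL^4_y$; the coupling term $\alpha g(v^\varepsilon)h'(|\psi^\varepsilon|^2)\psi^\varepsilon$ disappears since $\alpha\to 0$, and the cubic nonlinearity passes to the limit by standard arguments, producing the unique weak solution of \eqref{E1Schinfty}. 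The temperature equation is treated variationally: the $L^\infty_tL^1_x$ bound on $\rho^\varepsilon Q(\theta^\varepsilon)$, a time-derivative estimate extracted from \eqref{E1Q}, and a Feireisl-type Young-measure argument yield strong $L^1_{loc}$ convergence and identify the limit as a variational solution. The most delicate point in the whole argument is embedded in Step 2: the coupling force contains $\psi^\varepsilon\circ Y^\varepsilon$ and the Lagrangian map $Y^\varepsilon$ degenerates precisely where $\rho^\varepsilon$ approaches vacuum, so no $\varepsilon\to 0$ limit inside the composition is available; the rate $\alpha=o(\varepsilon^{1/2})$ is exactly what allows one to treat the whole force as a vanishing perturbation in $H^{-1}$ without ever identifying its limit, and a close second difficulty is that, absent a uniform lower bound on $\rho^\varepsilon$, every manipulation involving $v^\varepsilon=1/\rho^\varepsilon$ must be weighted against the integrability supplied by the Bresch--Desjardins estimate of Step 1.
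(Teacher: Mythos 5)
Your plan coincides with the paper's proof in every essential respect: the energy/entropy estimates from \eqref{difE1} and \eqref{E1Q}, the $\varepsilon^{2}\int\rho_x^2\rho^{-3}\,dx$ estimate propagated from hypothesis (iii), the higher integrability of $\rho^\varepsilon$ and $\rho^\varepsilon|u^\varepsilon|^3$, the $H^{-1}_{loc}$-compactness of the entropy dissipation measures via Murat's lemma and the Div--Curl/commutator argument of Chen--Perepelitsa, Aubin--Lions for the Schr\"odinger part, and Feireisl's monotonicity/renormalization argument for the variational solution of the thermal energy equation. Your identification of the delicate points (treating the coupling force containing $\psi^\varepsilon\circ Y^\varepsilon$ as a vanishing $H^{-1}$ perturbation via $\alpha=o(\varepsilon^{1/2})$, and weighting all manipulations of $v^\varepsilon=1/\rho^\varepsilon$ against the density-derivative estimate) matches how the paper actually handles them, so this is essentially the same proof.
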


For simplicity we state the definitions of finite-energy entropy solution to the Euler Equations, weak solution to the nonlinear Schr\"{o}dinger equation and variational solution of the thermal energy equation only in Section \ref{limit} where we discuss some generalities about the limit equations \eqref{E1rhoinfty}-\eqref{E1Schinfty}.

It is worth mentioning that the magnetic permeability $\beta$, which relates the magnetic field to the magnetic induction, is usually taken to be equal to $1$ in the literature (\cite{LL}) since in most real world media covered by the model this constant differs only slightly from the unity. However, the only physical restriction on it is its positivity.

As aforementioned, we are inspired by the work of Dias and Frid in \cite{DFr} who pursue similar objectives on a SW-LW interactions model involving the isentropic Navier-Stokes equations and who, in turn, follow the work by Chen and Perepelitsa in \cite{CP} on the vanishing viscosity limit for the isentropic one dimensional Navier-Stokes equations. Our main contribution here is to include the thermal description as well as the electromagnetic coupling.

\section{Existence and uniqueness of solutions}\label{solutions}

This Section is devoted to the proof of Theorem \ref{thm2}. To that end we write the whole system in Lagrangian coordinates and prove well posedness for the resulting system. In particular, we show that no vacuum nor concentration develops in finite time. This implies that the Lagrangian transformation is smooth and invertible and therefore we can turn back to the original Eulerean coordinates to conclude.

In order to prove well posedness for the system in the Lagrangian variables we first prove existence and uniqueness of local solutions and then extend the local solutions to global ones based on a priori estimates.

For the local result use a Faedo-Galerkin type method similar to the one applied by Dias and Frid in \cite{DFr}, which in turn resembles the classic work by Kazhikhov and Shelukhin in \cite{KzSh} (cf. \cite[Chapter~2]{AKM}). As for the global result, we develop some a priori estimates inspired by the work of Chen and Wang in \cite{CW} and by the work of Wang in \cite{W}. 

\subsection{Lagrangian coordinates}

Using relations \eqref{Lag3}, a straightforward calculation shows that system \eqref{E1rho}-\eqref{E1Sch} is equivalent to
\begin{flalign}
&v_t - u_y=0,&\label{L1v}\\
&u_t + \left(p+\frac{\beta}{2}|\mathbf{h}|^2 - \alpha g'(v)h(|\psi|^2)\right)_y=\left(\frac{\varepsilon  u_y}{v}\right)_y, &\label{L1u}\\
&\mathbf{w}_t - \beta\mathbf{h}_y=\Big(\frac{\mu \mathbf{w}_y}{v}\Big)_y, &\label{L1w}\\
&\Big[ e+\tfrac{1}{2}(u^2+|\mathbf{w}|^2+\beta v|\mathbf{h}|^2) + \alpha g(v)h(|\psi|^2) + \tfrac{1}{2}|\psi_y|^2 + \tfrac{1}{2}|\psi|^4  \Big]_t &\nonumber\\
 &\hspace{10mm} + \Big( u\Big(p+\frac{\beta}{2}|\mathbf{h}|^2 - \alpha g'(v)h(|\psi|^2)\Big) - \beta \mathbf{h}\cdot\mathbf{w} - (\psi_t\overline{\psi}_y + \overline{\psi}_t\psi_y)\Big)_y &\nonumber\\
 &\hspace{20mm}=\Big( \frac{\kappa \theta_y}{v} + \frac{\varepsilon uu_y}{v} + \frac{\mu \mathbf{w}\cdot\mathbf{w}_y}{v} + \frac{\nu \mathbf{h}\cdot\mathbf{h}_y}{v} \Big)_y, &\label{L1E}\\
&(\beta v\mathbf{h})_t - \beta \mathbf{w}_y=\Big(\frac{\nu \mathbf{h}_y}{v}\Big)_y, &\label{L1h}\\
&i\hspace{0.5mm}\psi_t+\psi_{yy}=|\psi|^2\psi+\tilde{\alpha}g(v)h'(|\psi|^2)\psi. &\label{L1Sch}
\end{flalign}
where, $v$ is the specific volume given by (\ref{SV}). Accordingly, equation \eqref{E1Q} results in
\begin{equation}
Q(\theta)_t + \theta p_\theta(\rho)u_y =(\frac{\kappa \theta_y}{v})_y+ \frac{\varepsilon u_y^2}{v} + \frac{\mu |\mathbf{w}_y|^2}{v} + \frac{\nu|\mathbf{h}_y|^2}{v}.\label{L1Q}
\end{equation}

Of course, this change of variables is justified only when $\rho$ is finite and strictly positive. Note that equation \eqref{E1rho} together with the boundary conditions \eqref{E10} on $u$ imply that
\[
\int_\Omega \rho(t,x)dx = \int_\Omega \rho_0(x)dx,
\]
and in view of \eqref{Lag3} we have that $\Omega_y=(0,d)$, where $d$ is the value of the integral above. For simplicity, we assume without loss of generality that $d=1$ and to avoid the overload of notation, in this section we omit the subindex of the domain of the Lagrangian coordinate and write it simply as $\Omega$.

Now, let us consider the initial-boundary value problem for system \eqref{L1v}-\eqref{L1Sch} on $\Omega$ with the following initial and boundary conditions
\begin{equation}
\begin{cases}
(v,u,\mathbf{w},\mathbf{h},\theta,\psi)|_{t=0}=(v_0,u_0,\mathbf{w}_0,\mathbf{h}_0,\theta_0,\psi_0)(y), &y\in\Omega,\\
(u,\mathbf{w},\mathbf{h},\theta_y,\psi)|_{\partial\Omega}=0.
\end{cases}\label{conds}
\end{equation}

In connection with \eqref{cotas0} and \eqref{reg0} we assume that
\begin{align}
&m \leq v_0(y),\theta_0(y) \leq M, &y\in\Omega\label{cotasL0}
\end{align}
and that
\begin{equation}
v_0\in H^1(\Omega)\cap W^{1,\infty}(\Omega),\hspace{5mm} u_0,\mathbf{w}_0,\mathbf{h}_0,\theta_0\in H^1(\Omega),\hspace{5mm}\psi_0\in H^2(\Omega;\mathbb{C}),\label{regL0}
\end{equation}

Remember that we made some assumptions on the pressure, internal energy, heat conductivity and coupling functions. In connection with \eqref{pe+ptet} and \eqref{defe}, by an abuse of notation, we have that $p=p(v,\theta)$ is given by
\begin{equation}
p(v,\theta)=p_e(v) + \theta p_\theta(v),\label{pe+ptetv}
\end{equation}
where the elastic part $p_e$ is given by
\begin{equation}
p_e(\rho)=a v^{-\gamma},
\end{equation}
with $a>0$ and $\gamma >1$. Concerning the thermal part of the pressure $p_\theta$ we assume that
\begin{equation}
\begin{cases}
p_\theta \in C(0,\infty)\cap C^1(0,\infty), &\lim_{v\to \infty}p_\theta(v)=0\\
p_\theta \text{ is a nonincreasing function of }v \in (0,\infty)\\
p_\theta(v)\leq p_0(1+v^{-\Gamma}), &\text{for all }\rho\geq 0,
\end{cases}\label{ptetv}
\end{equation}
for some $p_0 \geq 0$ and $\Gamma \leq \frac{\gamma}{2}$.

Accordingly, the internal energy $e=e(v,\theta)$ is given by
\begin{equation}
e(v,\theta)=P_e(v) + Q(\theta),\label{ePe+Qv}
\end{equation}
where
\begin{align}
&P_e(v)=\frac{a}{\gamma-1}v^{1-\gamma}, &Q(\theta)=\int_0^\theta C_\vartheta (z)dz.\label{PeQv}
\end{align}
Concerning the function $C_\vartheta$ we assume \eqref{Q}.

As before, the heat conductivity $\kappa$ must depend on $\theta$ and satisfy \eqref{K}. Moreover, we assume that the coupling functions $g$ and $h$ satisfy \eqref{gh}. Finally, we assume that the parameters $\varepsilon, \mu, \nu, \beta$ and $\alpha$ are fixed positive constants. 

\subsection{Local solutions}

First, we prove the existence of local solutions.

\begin{lemma}\label{local}
Let us assume that the initial data $(v_0,u_0,\mathbf{w}_0,\mathbf{h}_0,\theta_0,\psi_0)(y)$ satisfies
\begin{align}
&m < v_0(y),\theta_0(y) < M, &y\in\Omega,\label{m.v.M}
\end{align}
and that
\begin{equation}
v_0,u_0,\mathbf{w}_0,\mathbf{h}_0,\theta_0\in H^1(\Omega),\hspace{5mm}\psi_0\in H^1(\Omega;\mathbb{C}).\label{reg0.}
\end{equation}

Then, there exist $T>0$ and a solution of (\ref{L1v})-(\ref{L1Sch}), (\ref{conds}) satisfying
\begin{align*}
&v\in C([0,T];H^1(\Omega)),\hspace{5mm} \frac{m}{4}\leq v \leq 4M\\
&(u,\mathbf{w},\mathbf{h})\in C([0,T];H_0^1(\Omega))\cap L^2(0,T;H^2(\Omega)),\\
&\theta \in C([0,T];H^1(\Omega)),\hspace{5mm}\theta_y\in L^2(0,T;H_0^1(\Omega)), \hspace{5mm} \theta>0\\
&\psi\in C([0,T];H_0^1(\Omega;\mathbb{C})),\\
&v_t,u_t,\mathbf{w}_t,\mathbf{h}_t,\theta_t\in L^2(0,T;L^2(\Omega)).
\end{align*}
\end{lemma}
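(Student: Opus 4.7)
The plan is to follow the linearization-and-fixed-point strategy of Kazhikhov--Shelukhin \cite{KzSh} and Dias--Frid \cite{DFr}, adapted to the Schr\"{o}dinger--MHD coupling. On a short time interval $[0,T_*]$, I would define a map $\mathcal T$ on a closed ball $\mathcal B$ in $C([0,T_*];H^1(\Omega))^5\times C([0,T_*];H^1_0(\Omega;\mathbb C))$ sending $(\tilde v,\tilde u,\tilde{\mathbf w},\tilde{\mathbf h},\tilde\theta,\tilde\psi)$ to $(v,u,\mathbf w,\mathbf h,\theta,\psi)$ in stages. First solve the nonlinear Schr\"{o}dinger equation \eqref{L1Sch} with the coefficient $g(\tilde v)$ frozen, producing $\psi$; next solve the linear parabolic equation obtained from \eqref{L1u} by freezing $1/\tilde v$ in the diffusion and treating $(\alpha g'(\tilde v)h(|\psi|^2))_y$ as a source; analogously solve \eqref{L1w} and \eqref{L1h} for $\mathbf w$ and $\mathbf h$; then solve the linearized heat-type equation coming from \eqref{L1Q} for $\theta$ with $\kappa(\tilde\theta)$ frozen and the already-constructed $u,\mathbf w,\mathbf h$ providing the dissipation sources; finally update $v$ by $v(t,y)=v_0(y)+\int_0^t u_y(s,y)\,ds$. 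The lemma then amounts to producing a fixed point of $\mathcal T$.

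Each sub-problem is of a classical type. The NLS step is well-posed in $C([0,T_*];H^1_0)$ by a Duhamel/Banach fixed-point argument, because the nonlinearities $|\psi|^2\psi$ and $\alpha g(\tilde v)h'(|\psi|^2)\psi$ are Lipschitz on $H^1_0$-balls: $h'$ has compact support by \eqref{gh}, $g$ is smooth, and $\tilde v$ is bounded above and below on $\mathcal B$. The parabolic equations for $u,\mathbf w,\mathbf h$ are linear with uniformly elliptic diffusion (again by the bounds on $\tilde v$), so they can be solved by Faedo--Galerkin in the Dirichlet eigenbasis of $-\partial_y^2$; the forcing in the $u$-equation is controlled in $L^2_tL^2_y$ thanks to the compactness of $\text{supp}\,g'$ and the smoothness of $h$. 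Freezing $\kappa(\tilde\theta)$ in \eqref{L1Q} turns it into a linear parabolic equation for $\theta$ solvable in $C([0,T_*];H^1)\cap L^2(0,T_*;H^2)$, the key ingredient being the lower bound $C_\vartheta\ge c>0$ from \eqref{Q}. The $v$-update is pure integration and preserves the $H^1\cap W^{1,\infty}$ regularity of $v_0$.

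The main obstacle is to show that $\mathcal T$ admits a fixed point on $\mathcal B$ for $T_*$ small enough. I would argue contraction in the weaker norm $C([0,T_*];L^2)$: differences of two iterates satisfy linear equations whose right-hand sides are Lipschitz in the differences of the inputs, with constants bounded on $\mathcal B$; parabolic/Duhamel smoothing then produces factors of $T_*^{1/2}$ or $T_*$ that yield contraction once $T_*$ is small. Parallel to this, one must enforce the invariance of $\mathcal B$, which requires keeping $\theta>0$ and $v\in[m/4,4M]$: for $\theta$ I invoke a maximum-principle argument applied to \eqref{L1Q}, exploiting non-negativity of the dissipation sources; for $v$ the elementary estimate $\|v-v_0\|_{L^\infty}\le T_*\|u_y\|_{L^\infty_tL^\infty_y}$ on $\mathcal B$ yields the desired range provided $T_*$ is small. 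The most delicate step is closing the $\theta$-estimate in the contraction argument, where the quasilinearity of $\kappa(\theta)$ and the quadratic dissipation sources must be handled using the monotonicity encoded in the $\kappa(\theta)$-flux and the positive lower bound on $C_\vartheta$.

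Once a fixed point is secured, the extra regularity claimed in the lemma, namely $(u,\mathbf w,\mathbf h)\in L^2(0,T_*;H^2)$ and $v_t,u_t,\mathbf w_t,\mathbf h_t,\theta_t\in L^2(0,T_*;L^2)$, follows from standard energy estimates on the now-obtained nonlinear system: testing the parabolic equations against $-\partial_y^2$ of the corresponding unknown and against its time derivative, using the pointwise bounds on $v$ and $\theta$ already established, and closing by Cauchy--Schwarz and Gronwall.
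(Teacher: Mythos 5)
Your proposal is essentially correct but follows a genuinely different route from the paper. The paper does \emph{not} linearize and iterate: it runs a Faedo--Galerkin scheme directly on the full nonlinear system, expanding $(u^n,\mathbf{w}^n,\mathbf{h}^n,\psi^n)$ in the sine basis and $\theta^n$ in the cosine basis, defining $v^n$ by the same time-integration you use, projecting equations \eqref{L1u}--\eqref{L1Sch} onto these finite-dimensional spaces to obtain a nonlinear ODE system for the coefficients, and then closing a single differential inequality $\frac{d}{dt}\eta_n\leq C_1(1+\eta_n^{q_1})$ for the combined $H^1$-plus-dissipation functional $\eta_n$. Comparison with the maximal solution of the associated scalar ODE gives an existence time $t_0$ independent of $n$, a bootstrap keeps $v^n\in[m/2,2M]$, and a compactness argument produces the solution. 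Your decomposition into decoupled linear (or semilinear, for the NLS step) sub-problems plus a Banach fixed point buys modularity and yields local uniqueness for free, whereas the paper's route avoids difference estimates altogether at the price of a single large a priori estimate and a compactness passage. The one place where your argument carries real risk is the claimed contraction in $C([0,T_*];L^2)$: the difference of two iterates of, say, the $u$-equation contains the term $\bigl((\varepsilon/\tilde v_1-\varepsilon/\tilde v_2)u_{2y}\bigr)_y$, and after testing against $u_1-u_2$ you must place the $L^\infty_y$ norm on $u_{2y}$ (available in $L^4(0,T_*)$ from the $L^2_tH^2\cap L^\infty_tH^1$ bound on the ball, and supplying the small factor $T_*^{1/4}$) rather than on $\tilde v_1-\tilde v_2$; pairing the other way only controls the difference by its $L^2$-norm to the power $1/2$, which does not iterate to a contraction. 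With that Hölder pairing made explicit the scheme closes, and the remaining steps (ball invariance via $\|v-v_0\|_{L^\infty}\lesssim T_*^{1/2}\|u\|_{L^2_tH^2}$, positivity of $\theta$, and the a posteriori $H^2$/time-derivative regularity) are as you describe.
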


\begin{proof}
Let us construct a sequence of approximate solutions $(v^n,u^n,\mathbf{w}^n,\mathbf{h}^n,\theta^n,$ $\psi_n)$ where $(u^n,\mathbf{w}^n,\mathbf{h}^n,\theta^n,\psi^n)$ are of the form
\begin{align}
&u^n(t,y)=\sum_{k=1}^n u_k^n(t)sin(k\pi y),&\mathbf{w}^n(t,y)=\sum_{k=1}^n \mathbf{w}_k^n(t)sin(k\pi y),\nonumber\\
&\mathbf{h}^n(t,y)=\sum_{k=1}^n \mathbf{h}_k^n(t)sin(k\pi y),&\theta^n(t,y)=\sum_{j=0}^n \theta_j^n(t)cos(j\pi y),\label{appsols}\\
&\psi^n(t,y)=\sum_{k=1}^n \psi_k^n(t)sin(k\pi y),&\hspace{20mm}n=1,2,...\nonumber
\end{align}

Note that each approximation is written as a sum of either sines or cosines so that they match the desired boundary conditions (for example, $\theta^n_y|_{\partial \Omega}=0$).

In order to determine the coefficients $u_k^n(t), \mathbf{w}_k^n(t), \mathbf{h}_k^n(t), \theta_j^n(t), \psi_k^n(t)$, $j=0,1,...,n$, $k=1,...,n$, we demand that equations (\ref{L1u})-(\ref{L1Sch}) be satisfied in an approximate way. To this end, we consider the spaces
\begin{align*}
&\mathcal{S}_n:=\text{span}_\mathbb{C}\{ sin(k\pi y) : k=1,...,n \},&\mathcal{C}_n:=\text{span}_\mathbb{C}\{ cos(j\pi y) : j=0,1,...,n \},
\end{align*}
with respective projections $P_n^{\mathcal{S}}:L^2(\Omega)\to \mathcal{S}_n$ and $P_n^{\mathcal{C}}:L^2(\Omega)\to \mathcal{C}_n$.

By virtue of (\ref{L1v}) we take 
\begin{equation}
v^n(t,y):=v_0(y)+\int_0^t u_y^n(y,s)ds,\label{Lnv}
\end{equation}
so that,
\[
v_t^n=u_y^n,\hspace{10mm} v^n|_{t=0}=v_0.
\]

With the notation above, we consider the following system.

\begin{flalign}
&u_t^n = P_n^{\mathcal{S}}\Bigg[\left(-p(v^n,\theta^n)-\frac{\beta}{2}|\mathbf{h}^n|^2 + \alpha g'(v^n)h(|\psi^n|^2)\frac{\varepsilon  u_y^n}{v^n}\right)_y\Bigg],&\label{Lnu}\\
&\mathbf{w}_t^n =P_n^{\mathcal{S}}\Bigg[\beta\mathbf{h}_y^n + \left(\frac{\mu \mathbf{w}_y^n}{v^n}\right)_y\Bigg],&\label{Lnw}\\
&\beta \mathbf{h}^n_t= P_n^{\mathcal{S}}\Bigg[ \frac{1}{v^n}\Bigg( -\beta u_y^n\mathbf{h}^n + \beta \mathbf{w}^n_y + \left(\frac{\nu \mathbf{h}_y^n}{v^n}\right)_y \Bigg)\Bigg],&\label{Lnh}\\
&\theta_t^n= P_n^{\mathcal{C}}\Bigg[ \frac{1}{C_\vartheta(\theta^n)}\Bigg(-\theta^n p_\theta(v^n)u_y^n + \left( \frac{\kappa(\theta^n)\theta_y^n}{v^n} \right)_y & \nonumber\\
  &\hspace{50mm}+\frac{\varepsilon |u_y^n|^2}{v^n} + \frac{\mu |\mathbf{w}_y^n|^2}{v^n} + \frac{\nu |\mathbf{h}_y^n|^2}{v^n} \Bigg) \Bigg],&\label{LnE}\\
&i\hspace{0.5mm}\psi_t^n = P_n^{\mathcal{S}}\Bigg[ -\psi_{yy}^n + |\psi^n|^2\psi^n + \alpha g(v^n)h'(|\psi^n|^2)\psi^n \Bigg].&\label{LnSch}
\end{flalign}

Now, system (\ref{Lnu})-(\ref{LnSch}) poses a system of ODE's for the coefficients $u_k^n(t)$, $\mathbf{w}_k^n(t)$, $\mathbf{h}_k^n$, $\theta_j^n(t)$, $\psi_k^n(t)$, $k=1,2,...,n$, $j=0,1,...,n$. 

Regarding the initial conditions, we impose that 
\begin{equation}
(u^n,\mathbf{w}^n,\mathbf{h}^n,\theta^n,\psi^n)|_{t=0}=(u_0^n,\mathbf{w}_0^n,\mathbf{h}_0^n,\theta_0^n,\psi_0^n),\label{Ln0}
\end{equation}
where the latter satisfy
\begin{equation}
u_0^n,\mathbf{w}_0^n,\mathbf{h}_0^n,\psi_0^n \in \mathcal{S}_n,\hspace{10mm}\theta_0^n\in\mathcal{C}_n,
\end{equation}
and
\begin{equation}
(u_0^n,\mathbf{w}_0^n,\mathbf{h}_0^n,\theta_0^n,\psi_0^n)\to (u_0,\mathbf{w}_0,\mathbf{h}_0,\theta_0,\psi_0)
\end{equation}
in $H^1(\Omega)$ (and, therefore, uniformly).

Taking the coefficients of the newly defined approximate initial data as initial conditions for the respective coefficients and taking into account relation (\ref{Lnv}), the existence and uniqueness of solutions of (\ref{Lnu})-(\ref{LnSch}) are guaranteed by the well known classical results on the theory of ordinary differential equations.

Having a sequence of approximate solutions we now need some uniform estimates that allow us to take a convergent subsequence to a solution of the original problem (\ref{L1v})-(\ref{L1Sch}), (\ref{conds}).

Observe that each one of the approximate solutions $(v^n,u^n,\mathbf{w}^n,\mathbf{h}^n,\theta^n,\psi^n)$ exists only on a time interval $[0,t_n]$, so that we have to guarantee that $t_n$ is bounded from below by some $t_0>0$ independent of $n$.

First we assume that 
\begin{equation}
\frac{m}{2}\leq v^n(y,t)\leq 2M,\hspace{10mm}y\in\Omega,t\in[0,t_n].\label{mvM4}
\end{equation}
This is certainly true on a possibly smaller time interval.

Second, the simplest case of Sobolev imbeddings applied to $\theta^n$ implies
\begin{equation}
\max_{y\in\Omega}|\theta^n(t,y)|\leq ||\theta^n(t)||_{L^2(\Omega)} + ||\theta_y^n(t,y)||_{L^2(\Omega)}.
\end{equation} 

Our growth conditions hypotheses on $p$ and $e$ then imply
\begin{equation}
\begin{cases}
0\leq p(v^n,\theta^n)\leq C(1+||\theta^n||_{L^2(\Omega)} + ||\theta_y^n||_{L^2(\Omega)}),\\
C_\vartheta(\theta^n)\geq C^{-1}, \hspace{10mm} \kappa(\theta^n)\geq C^{-1},\\
|p_\theta(v^n,\theta^n)|\leq C,\\
|p_v(v^n,\theta^n)|\leq C(1+||\theta^n||_{L^2(\Omega)} + ||\theta_y^n||_{L^2(\Omega)}),\\
|\kappa(\theta^n)| + |\kappa_\theta(\theta^n)| \leq C(1+||\theta^n||_{L^2(\Omega)}^{\tilde{q}} + ||\theta_y^n||_{L^2(\Omega)}^{\tilde{q}}).
\end{cases}\label{growth}
\end{equation}
Here, and in what follows, $C$ denotes a positive constant independent of $n$.

Finally, from (\ref{Lnv})we have that
\begin{align}
&m-t^{1/2}\left(\int_0^t||u_{yy}^n(s)||_{L^2(\Omega)}^2ds\right)^{1/2}\nonumber\\
&\hspace{20mm}\leq v^n(t,y)\leq M+t^{1/2}\left(\int_0^t||u_{yy}^n(s)||_{L^2(\Omega)}^2ds\right)^{1/2},\label{cotvn}
\end{align}
for a.e. $y\in\Omega$ and $t\in [0,t_n]$. Also,
\begin{equation}
||v_y^n(t)||_{L^2(\Omega)}\leq C+C\left(\int_0^t||u_{yy}^n(s)||_{L^2(\Omega)}^2ds\right)^{1/2},\label{cotvyn}
\end{equation}
for $t\in [0,\min \{t_n, 1 \}]$.

With these observations at hand, we can prove that as long as (\ref{mvM4}) holds (which is certainly true at $t=0$) we have the following inequality
\begin{equation}
\frac{d}{dt}\eta_n(t) \leq C_1(1+\eta_n(t)^{ q_1 }),\label{deseta}
\end{equation}
for a certain $q_1 >0$ where,
\begin{equation}
\eta_n(t)=||(u^n,\mathbf{w}^n,\mathbf{h}^n,\theta^n,\psi^n)(t)||_{H^1(\Omega)}^2 + \int_0^t ||(u_{yy}^n,\mathbf{w}_{yy}^n,\mathbf{h}_{yy}^n,\theta_{yy}^n)(s)||_{L^2(\Omega)}^2 ds.
\end{equation}
Since $\eta_n(0)$ is bounded by a constant $C_2>0$, then from (\ref{deseta}) we conclude that
\[
\eta_n(t) \leq \phi(t)
\]
for all $n=1,2,...$ and all $0\leq t < t^*$, where $\phi$ is the solution of the ODE
\begin{align*}
&\frac{d}{dt}X(t) = C_1(1+X(t)^{q_1})\\
&X(0)=C_2,
\end{align*}
and $t^*$ is the maximal time of existence of the solution to this ODE relative to the initial condition $C_2$.

After this, by (\ref{cotvn}) we can choose $0<t_0<t^*$ small enough so that (\ref{mvM4}) holds for all $t\in [0,t_0]$ and all $n$.

Now, \eqref{deseta} follows by multiplying (\ref{Lnu}) by $u^n-u_{yy}^n$, (\ref{LnSch}) by $\overline{\psi_t^n}$, (\ref{Lnw}) and (\ref{Lnh}) by $w^n-w_{yy}^n$ and $\mathbf{h}^n-\mathbf{h}_{yy}^n$ respectively and (\ref{LnE}) by $\theta^n-\theta_{yy}^n$, adding the resulting equations, integrating and using estimates \eqref{mvM4}-\eqref{cotvyn} combined with some tedious, but standard manipulation (which involves integration by parts and several applications of Young's inequality with $\varepsilon$). We omit the details.

In order to conclude the proof of the Lemma, a standard compactness argument implies that the sequence of approximate solutions converges to a solution of system \eqref{L1v}-\eqref{L1Sch}, defined on the time interval $[0,T]$, with $T=t_0$, and with all the desired properties.
\end{proof}

\subsection{A priori estimates}

The next step is to deduce some a priori estimates independent of time on the solutions of system \eqref{L1v}-\eqref{L1Sch} that allow us to extend the local solutions to global ones. The a priori estimates stated below are based on the analogues contained in \cite{CW} and in \cite{W}, on the study of the planar MHD equations. As the proofs are very similar we only give an outline of them and indicate the modifications that have to be made in order to include the coupling terms.

Let $(v,u,\mathbf{w},\mathbf{h},\theta,\psi)$ be a solution of \eqref{L1v}-\eqref{L1Sch}, \eqref{conds}. Let us assume that the solution is defined on a time interval $[0,T]$ where $T>0$ is fixed and that that $v(y,t),\theta(y,t)>0$ for all $(y,t)\in\Omega\times [0,T]$.

We begin with some energy estimates, followed by the uniform bounds from above and from below on the specific volume. In all of the subsequent calculations $C$ will denote a generic positive constant that may depend on $T$ and on the initial data. 

\begin{lemma}\label{apriori1}
\begin{equation}
\int_\Omega v(t,y)dy =\int_\Omega v_0(y)dy,\label{mass}
\end{equation}
\begin{align}
&\int_\Omega \Big(e+\theta^{1+r}+\tfrac{1}{2}(u^2+|\mathbf{w}|^2+\beta v|\mathbf{h}|^2) \nonumber\\
&\hspace{35mm}+  \alpha g(v)h(|\psi|^2) + \tfrac{1}{2}|\psi_y|^2 + \tfrac{1}{2}|\psi|^4 \Big)dy\leq C,\label{energy}
\end{align}
\begin{equation}
|\psi(t,y)|\leq C.\label{psiinfty}
\end{equation}
\end{lemma}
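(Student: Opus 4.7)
The mass relation (\ref{mass}) is immediate: integrate (\ref{L1v}) in $y$ over $\Omega$ and use $u|_{\partial\Omega}=0$ to conclude $\tfrac{d}{dt}\int_\Omega v\,dy=0$. For the energy bound (\ref{energy}) I would exploit the identity (\ref{L1E}), which is already in divergence form. Integrating in $y$, every flux term vanishes at $\partial\Omega$: the convective flux contains factors of $u$, $\mathbf{w}$, or $\mathbf{h}$ (all zero on the boundary); the Schrödinger flux $\psi_t\overline{\psi}_y+\overline{\psi}_t\psi_y$ vanishes because $\psi|_{\partial\Omega}=0$ forces $\psi_t|_{\partial\Omega}=0$; and the dissipative flux either contains $\theta_y$ (for the heat term) or one of $u,\mathbf{w},\mathbf{h}$ (for the viscous and resistive terms). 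One thus gets conservation of
\[
\mathcal{H}(t):=\int_\Omega\!\Bigl(e+\tfrac{1}{2}(u^2+|\mathbf{w}|^2+\beta v|\mathbf{h}|^2)+\alpha g(v)h(|\psi|^2)+\tfrac{1}{2}|\psi_y|^2+\tfrac{1}{2}|\psi|^4\Bigr)dy,
\]
and $\mathcal{H}(0)$ is finite by (\ref{cotasL0})--(\ref{regL0}) together with the nonnegativity of $g,h$.

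To extract the $\theta^{1+r}$ contribution in (\ref{energy}) I would invoke the lower bound in (\ref{Q}). Since $C_\vartheta(z)\geq e_1(1+z^r)$, integration gives $Q(\theta)\geq \tfrac{e_1}{1+r}\theta^{1+r}$, so $e(v,\theta)=P_e(v)+Q(\theta)\geq \tfrac{e_1}{1+r}\theta^{1+r}$ (recall $P_e\geq 0$). Conservation of $\mathcal{H}$ then yields a uniform bound on $\int_\Omega\theta^{1+r}\,dy$ with constant depending only on $e_1$, $r$, and the initial data.

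For the pointwise bound (\ref{psiinfty}) I would first derive conservation of the $L^2$-mass of $\psi$. Multiplying (\ref{L1Sch}) by $\overline{\psi}$, taking the imaginary part, and integrating over $\Omega$, the cubic and coupling terms are real and drop out; the term $\int_\Omega \mathrm{Im}(\psi_{yy}\overline{\psi})\,dy$ reduces via integration by parts to a boundary value that vanishes because $\psi|_{\partial\Omega}=0$. What remains is $\tfrac{1}{2}\tfrac{d}{dt}\|\psi(t)\|_{L^2}^2=0$. Combining with the $H^1$-seminorm control $\int_\Omega|\psi_y|^2\,dy\leq C$ already contained in (\ref{energy}) yields a uniform $H^1(\Omega)$ bound for $\psi(t,\cdot)$, and the one-dimensional Sobolev embedding $H^1(\Omega)\hookrightarrow L^\infty(\Omega)$ delivers (\ref{psiinfty}).

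The main subtlety I anticipate is bookkeeping: one must verify that every boundary contribution arising from integrating the full identity (\ref{L1E}) over $\Omega$ is silenced by the boundary data in (\ref{conds})---in particular the Schrödinger flux, whose vanishing hinges on $\psi_t|_{\partial\Omega}=0$---and track that the coupling terms $\alpha g(v)h(|\psi|^2)$ and $\alpha g'(v)h(|\psi|^2)u$ cancel correctly between the energy equation (\ref{L1E}) and the Schrödinger contribution, as was already indicated in (\ref{difE1}). Beyond this verification the argument is standard: an energy identity, a structural inequality on $C_\vartheta$, and a Schrödinger $L^2$-conservation feeding into a one-dimensional Sobolev embedding.
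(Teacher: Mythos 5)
Your proposal is correct and follows essentially the same route as the paper: \eqref{mass} from integrating \eqref{L1v} with $u|_{\partial\Omega}=0$, \eqref{energy} from integrating the divergence-form energy identity \eqref{L1E} together with the lower bound on $Q(\theta)$ from \eqref{Q}, and \eqref{psiinfty} from the one-dimensional Sobolev embedding. The only (harmless) extra step is your separate derivation of $L^2$-conservation for $\psi$; the paper dispenses with it because the $\tfrac{1}{2}|\psi|^4$ term already present in \eqref{energy} controls $\|\psi\|_{L^2}$ on the bounded domain, so \eqref{psiinfty} follows directly from \eqref{energy} and $H^1(\Omega)\hookrightarrow C(\overline{\Omega})$.
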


Estimate \eqref{mass} follows directly from equation \eqref{L1v} and the no-slip boundary condition $u|_{\partial \Omega}=0$ from \eqref{conds}, while \eqref{energy} follows from the energy equation \eqref{L1E}, our hypotheses on the initial data \eqref{cotas0}, \eqref{regL0} and the growth conditions \eqref{Q} on the internal energy.

Finally, \eqref{psiinfty} is a consequence of \eqref{energy} and the Sobolev embedding $H^1(\Omega)\hookrightarrow C(\Omega)$.

\begin{lemma}\label{apriori2}
\begin{equation}
C^{-1}\leq v(y,t)\leq C,\label{cVC}
\end{equation}
\begin{equation}
\int_\Omega(\theta-1-\log \theta)dy+\int_0^t\int_\Omega \frac{\kappa \theta_y^2}{v\theta^2}dy\hspace{0.5mm}ds\leq C,\label{kappatety}
\end{equation}
\begin{equation}
\int_0^t\int_\Omega \left( \varepsilon u_y^2+\mu|\mathbf{w}_y|^2+\nu|\mathbf{h}_y|^2+\theta_y^2\right)dy\hspace{0.5mm}ds\leq C.\label{uywyhy}
\end{equation}
\end{lemma}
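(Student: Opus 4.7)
My approach will follow closely the strategy developed for planar MHD in \cite{CW, W}, with the necessary adaptations to incorporate the SW-LW coupling. The enabling observation is that, by Lemma \ref{apriori1}, $|\psi|\leq C$ uniformly, and by \eqref{gh} the quantities $g(v), g'(v), h(|\psi|^2), h'(|\psi|^2)$ are all uniformly bounded with $g'$ supported compactly in $(0,\infty)$. Consequently, the coupling contributions in \eqref{L1u}, \eqref{L1w}, and \eqref{L1h} behave as bounded, lower-order perturbations throughout the argument.

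For the entropy-like bound \eqref{kappatety}, I would first derive the Lagrangian entropy balance. Maxwell's relation \eqref{entropydef} gives $s_\theta = C_\vartheta(\theta)/\theta$ and $s_v = p_\theta(v)$, and combining with \eqref{L1v} and \eqref{L1Q} yields
\[
s_t - \left(\frac{\kappa\theta_y}{v\theta}\right)_y = \frac{\kappa\theta_y^2}{v\theta^2} + \frac{\varepsilon u_y^2 + \mu|\mathbf{w}_y|^2 + \nu|\mathbf{h}_y|^2}{v\theta}.
\]
Integrating over $\Omega\times [0,t]$ with $\theta_y|_{\partial\Omega}=0$ gives both the bound on $\int_0^t\int_\Omega \kappa\theta_y^2/(v\theta^2)\,dy\,ds$ and the lower bound $\int s\,dy \geq -C$. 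The control on $\int(\theta - 1 - \log\theta)\,dy$ then follows by splitting at $\theta = 1$: on $\{\theta\geq 1\}$ one has $\theta - 1 - \log\theta \leq \theta$, absorbed by $\int\theta\,dy\leq C$ from \eqref{energy} together with $Q(\theta)\geq c_0\theta$; on $\{\theta<1\}$ one has $\theta - 1 - \log\theta \leq -\log\theta$, and combining $\int s\,dy\geq -C$ with $\inf C_\vartheta > 0$ bounds $-\int\log\theta\,dy$ from above.

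The heart of the proof is the two-sided bound \eqref{cVC} on $v$, for which I plan to use a Kazhikhov--Shelukhin type representation. Rewriting \eqref{L1u} as $u_t = \sigma_y$ with effective viscous flux
\[
\sigma := \frac{\varepsilon u_y}{v} - p - \frac{\beta}{2}|\mathbf{h}|^2 + \alpha g'(v)h(|\psi|^2),
\]
and exploiting $\varepsilon u_y/v = \varepsilon(\log v)_t$, integration in $t$ and then in $y$ produces an explicit representation of $v(y,t)$ in terms of $v_0(y)$, the spatial integral $\int_0^y(u(z,t)-u_0(z))\,dz$, and time-integrals of $p$, $\beta|\mathbf{h}|^2$, and $\alpha g'(v)h(|\psi|^2)$; a space-independent prefactor is fixed by mass conservation $\int v\,dy = \int v_0\,dy$. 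The spatial integral is bounded uniformly via \eqref{energy}, and the coupling integral via \eqref{psiinfty} and \eqref{gh}; the magnetic and pressure contributions are treated as in \cite{CW, W}, with the thermal pressure $\delta\theta p_\theta(v)$ absorbed through the entropy estimate obtained above.

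Finally, the dissipation bound \eqref{uywyhy} follows from standard kinetic-energy balances: multiply \eqref{L1u}, \eqref{L1w}, and \eqref{L1h} by $u$, $\mathbf{w}$, and $\beta v\mathbf{h}$ respectively, integrate by parts, and use the two-sided bound on $v$ together with \eqref{psiinfty} to absorb the coupling terms; the $\theta_y^2$ piece comes from the entropy estimate combined with $\kappa\geq k_1$ and the bounds on $v$ and $\theta$. I expect the hardest step to be the Kazhikhov--Shelukhin argument, specifically controlling $\int_0^t p(v,\theta)\,d\tau$ pointwise in $y$ without an a priori $L^\infty$ bound on $\theta$, and verifying that the coupling contributions do not spoil the bound on $v$ — a verification enabled precisely by the compact support of $g'$ in $(0,\infty)$ and the uniform bound on $\psi$ from \eqref{psiinfty}.
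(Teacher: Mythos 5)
Your proposal follows essentially the same route as the paper: the entropy balance yields \eqref{kappatety} (and hence the time-integrability of $\max_{y}\theta$ needed to control the thermal part of the pressure), a Kazhikhov--Shelukhin representation built from $(\varepsilon\log v)_{yt}=u_t+\big(p+\tfrac{\beta}{2}|\mathbf{h}|^2-\alpha g'(v)h(|\psi|^2)\big)_y$ gives \eqref{cVC}, and the coupling terms are rendered harmless by \eqref{psiinfty} and \eqref{gh}. The only cosmetic differences are that the paper obtains \eqref{uywyhy} by integrating the thermal energy equation \eqref{L1Q}, where the dissipation appears as a source, rather than via kinetic-energy balances, and that your $\theta_y^2$ bound should invoke the full growth condition $\kappa\geq k_1(1+\theta^q)$ with $q\geq 2$ from \eqref{K} (so that $v\theta^2/\kappa$ is bounded), not merely $\kappa\geq k_1$ or a pointwise bound on $\theta$, which is not yet available at this stage.
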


Estimate \eqref{cVC} is based on an entropy estimate combined with an explicit form for the specific volume that can be deduced from the momentum equation \eqref{L1u}. More specifically, in view of \eqref{L1v}, equation \eqref{L1u} can be rewritten as
\begin{equation}
(\varepsilon \log v)_{yt}=u_t+\left(p+ \frac{\beta}{2}|\mathbf{h}|^2-\alpha g'(v)h(|\mathbf{w}|^2) \right)_y.\label{logv}
\end{equation}

Upon integration and with some manipulation we can get an explicit formula for $v$. After this, in connection with \eqref{entropy}, we consider the entropy $s=s(v,\theta)$ given by
\begin{equation}
s(v,\theta)=\int_1^\theta \frac{C_\vartheta(z)}{z}dz-\int_v^1 p_\theta(z)dz.\label{entropyv}
\end{equation}

Then, $s(v,\theta)$ satisfies the following equation
\begin{equation*}
s_t-\left( \frac{\kappa(\theta) \theta_y}{v\theta} \right)_y=\frac{\kappa(\theta)\theta_y^2}{v\theta^2}+\frac{\varepsilon u_y^2}{v\theta}+\frac{\mu |\mathbf{w}_y|^2}{v\theta}+\frac{\nu |\mathbf{h}_y|^2}{v\theta},\label{L1s}
\end{equation*}
and integrating this equation we obtain estimate \eqref{kappatety}, which provides a way of estimating the thermal part of the pressure in the process of obtaining \eqref{cVC}. In particular, using \eqref{K} we have
\begin{align}
\int_0^tM_\theta(s)ds&\leq \int_0^t\left( \int_\Omega\theta dy + \int_\Omega |\theta_y|dy \right)ds\nonumber\\
                     &\leq C+ \int_0^t\left( \int_\Omega \frac{\theta_y^2}{v}dy+\int_\Omega v\hspace{0.5mm}dy \right)ds\nonumber\\
                     &\leq C+C\int_0^t\int_\Omega \frac{\kappa \theta_y^2}{v\theta^2}dy\hspace{0.5mm}ds\nonumber\\
                     &\leq C,\label{Mtetint} 
\end{align}
where $M_\theta(t)=\max_{y\in\Omega}\theta(t,y)$. Note that here we use our hypothesis \eqref{K} on the heat conductivity $\kappa$.

Finally, having \eqref{cVC}, estimate \eqref{uywyhy} results by integrating equation \eqref{L1Q}.

At this point, the estimates from Lemma \ref{apriori1} on the wave function $\psi$, as well as our hypotheses \eqref{gh} on the coupling functions, suffice in order to deal with the coupling term in equation \eqref{logv} without any major difficulties.

We continue with an estimate on the $L^2$ norm of the derivatives of $v,u,\mathbf{w}$ and $\mathbf{h}$.

\begin{lemma}\label{apriori3}
\begin{align}
&\int_\Omega (v_y^2+u_y^2+|\mathbf{w}_y|^2+|\mathbf{h}_y|^2)dy\nonumber\\
&\hspace{14mm}+\int_0^t\int_\Omega(u_{yy}^2+|\mathbf{w}_{yy}|^2+|\mathbf{h}_{yy}|^2+u_y^4+|\mathbf{w}_y|^4+|\mathbf{h}_y|^4)dy\hspace{0.5mm}ds\leq C.
\end{align}
\begin{equation}
\int_\Omega (|\psi_{t}|^2 + |\psi_{yy}|^2)dy\leq C,\label{psiyy}
\end{equation}
\end{lemma}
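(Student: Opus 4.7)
The plan is to derive these bounds in three stages---first for $v_y$, then for the velocity and magnetic first and second derivatives, and finally for the Schr\"{o}dinger quantities---treating the SW-LW coupling throughout as a lower-order perturbation of the classical planar MHD estimates of Chen-Wang \cite{CW} and Wang \cite{W}. The key inputs from Lemmas~\ref{apriori1}--\ref{apriori2} are the uniform bounds $C^{-1}\le v\le C$ from \eqref{cVC}, $|\psi|\le C$ from \eqref{psiinfty}, the $L^\infty_t L^2_y$ controls on $\psi_y$ and $\mathbf{h}$ from \eqref{energy}, and the $L^2_{t,y}$ integrability of $u_y,\mathbf{w}_y,\mathbf{h}_y,\theta_y$ from \eqref{uywyhy}. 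Crucially, the compact supports of $g'$ and $h'$ in \eqref{gh} together with these bounds keep $g, g', g'', h, h', h''$ uniformly bounded along the solution, so every coupling factor has a bounded prefactor.

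For the $v_y$ bound the key identity is the Kazhikhov-Shelukhin-type observation $(\varepsilon u_y/v)_y = \varepsilon\,\partial_t(v_y/v)$, which rewrites \eqref{L1u} as
\[
\partial_t\bigl(u-\varepsilon v_y/v\bigr) + \bigl(p+\tfrac{\beta}{2}|\mathbf{h}|^2 - \alpha g'(v) h(|\psi|^2)\bigr)_y = 0.
\]
Testing this against $u - \varepsilon v_y/v$ (or equivalently combining a test of \eqref{L1u} against $v_y$ with the basic energy estimate for $u$), integrating in $y$ and exploiting the dissipative sign coming from $p_e'(v)<0$, yields a Gronwall-type inequality for $\|v_y(t)\|_{L^2}^2$ whose forcing is controlled by the bounds on $|\mathbf{h}|$, $\theta$ and the coupling term. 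The last of these reduces, upon expansion, to $g''(v)v_y h(|\psi|^2) + 2g'(v)h'(|\psi|^2)\mathrm{Re}(\overline{\psi}\psi_y)$ whose prefactors are bounded and whose $\psi_y$ norm is already under control.

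Next, I would test \eqref{L1u}, \eqref{L1w}, and \eqref{L1h} against $u_t$, $\mathbf{w}_t$ and $\mathbf{h}_t$ respectively. Each test produces a dissipation term $\int |u_t|^2\,dy$ alongside a time derivative of $\int \tfrac{\varepsilon}{2v}u_y^2\,dy$ (with analogues involving $\mu, \nu$), while the remaining pressure, magnetic and coupling contributions are absorbed via Young's inequality using the $v_y$ bound just obtained together with the controls on $\theta$, $\theta_y$, $\mathbf{h}$ and $\psi_y$. This gives $u_y,\mathbf{w}_y,\mathbf{h}_y \in L^\infty_t L^2_y$ and $u_t,\mathbf{w}_t,\mathbf{h}_t\in L^2_{t,y}$. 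The $L^2_{t,y}$ bounds on $u_{yy}, \mathbf{w}_{yy}, \mathbf{h}_{yy}$ then follow by reading the three parabolic equations algebraically as relations for the second derivatives, and the $L^4_{t,y}$ bounds on the first derivatives follow from the Gagliardo-Nirenberg interpolation $\|f\|_{L^4}^4\le C\|f\|_{L^2}^3\|f_y\|_{L^2}$ combined with $L^\infty_t L^2_y\cap L^2_t H^1_y$ regularity.

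For the Schr\"{o}dinger bounds I differentiate \eqref{L1Sch} in $t$, multiply by $\overline{\psi_t}$ and take the imaginary part. The local nonlinearities $|\psi|^2\psi$ and $\alpha g(v)h'(|\psi|^2)\psi$ produce terms bounded by $C\|\psi_t\|_{L^2}^2$ after using $|\psi|\le C$, while the critical mixed contribution $\alpha g'(v)\,v_t\,h'(|\psi|^2)\psi$ is bounded by $C\|u_y\|_{L^2}\|\psi_t\|_{L^2}$ since $v_t=u_y$ and all prefactors are bounded. Gronwall combined with $\|u_y\|_{L^2}^2\in L^1_t$ then gives $\|\psi_t(t)\|_{L^2}\le C$, and reading \eqref{L1Sch} as $\psi_{yy} = -i\psi_t + |\psi|^2\psi + \alpha g(v)h'(|\psi|^2)\psi$ supplies the bound on $\|\psi_{yy}(t)\|_{L^2}$; the initial value $\psi_t(0)\in L^2$ is computed from the equation itself using $\psi_0\in H^2$. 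The main obstacle throughout is ensuring that the coupling cross-terms never exceed the available dissipation, and this is handled precisely by the compact supports of $g'$ and $h'$ and by performing the estimates in the order above, so that each new top-order norm is absorbed using only previously established quantities.
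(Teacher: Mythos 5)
Your overall architecture matches the paper's: the $v_y$ bound via the Kazhikhov--Shelukhin quantity (your $u-\varepsilon v_y/v$ is exactly the paper's $u-V_y$ with $V=\varepsilon\log v$, tested against itself and closed by Gronwall using $\int_0^t M_\theta\,ds\le C$), then parabolic energy estimates for $u,\mathbf{w},\mathbf{h}$, then the time-differentiated Schr\"odinger estimate with the key observation that $v_t=u_y$ is already square-integrable in space-time. The Schr\"odinger step in particular is essentially verbatim the paper's argument.

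There is, however, one concrete gap in the first stage. When you test $\partial_t(u-\varepsilon v_y/v)+\bigl(p+\tfrac{\beta}{2}|\mathbf{h}|^2-\alpha g'(v)h(|\psi|^2)\bigr)_y=0$ against $u-\varepsilon v_y/v$, the magnetic contribution is $\beta\int_0^t\int_\Omega (\mathbf{h}\cdot\mathbf{h}_y)\,(u-\varepsilon v_y/v)\,dy\,ds$, and at this point the only available controls on $\mathbf{h}$ are $L^\infty_tL^2_y$ from \eqref{energy} and $L^2_tH^1_y$ from \eqref{uywyhy}; neither gives $\mathbf{h}\cdot\mathbf{h}_y\in L^2_{t,y}$ for free, so "controlled by the bounds on $|\mathbf{h}|$" does not close the Gronwall loop as stated. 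The paper resolves this with a separate auxiliary estimate: multiplying \eqref{L1h} by $|\mathbf{h}|^2\mathbf{h}$, integrating, and using \eqref{L1v} to obtain $\int_\Omega|\mathbf{h}|^4\,dy+\int_0^t\int_\Omega|\mathbf{h}\cdot\mathbf{h}_y|^2\,dy\,ds\le C$ \emph{before} running Gronwall on $\|V_y-u\|_{L^2}^2$. You would need either to add this step or to replace it with an explicit interpolation $\|\mathbf{h}\|_{L^\infty_y}\le C\bigl(\|\mathbf{h}\|_{L^2}+\|\mathbf{h}\|_{L^2}^{1/2}\|\mathbf{h}_y\|_{L^2}^{1/2}\bigr)$ so that the magnetic term enters the Gronwall inequality with the integrable coefficient $\|\mathbf{h}_y\|_{L^2}^2$; as written, the step is missing. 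A minor, non-fatal difference elsewhere: you test the parabolic equations against $u_t,\mathbf{w}_t,\mathbf{h}_t$ and recover the second derivatives algebraically, whereas the paper tests directly against $u_{yy},\mathbf{w}_{yy},\mathbf{h}_{yy}$; both routes work, but yours generates an extra cubic term $\int \varepsilon u_y^2 v_t/v^2=\int\varepsilon u_y^3/v^2$ from differentiating the weight $1/v$ in time, which must itself be absorbed by interpolation against the dissipation you are trying to produce.
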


\begin{proof}
First we deal with the $L^2$ norm of $v_y$. Define $V(y,t):=\varepsilon\log v$. Then, from \eqref{L1v} we see that $V$ satisfies $V_t=\varepsilon \frac{u_y}{v}$, so we can rewrite equation \eqref{L1u} as 
\begin{equation}
(V_y-u)_t = \left( p+\frac{\beta}{2}|\mathbf{h}|^2 - \alpha g'(v)h(|\psi|^2) \right)_y.\label{L1V}
\end{equation}

Multiply the above equation by $V_y-u$ and integrate. After some manipulation and using Lemma \ref{apriori1} we obtain
\begin{flalign*}
&\frac{1}{2}\int_\Omega |V_y -u|^2dy&\\
&\hspace{10mm}\leq C+C\int_0^t(1+M_\theta(s))\int_\Omega |V_y-u|^2dy\hspace{0.5mm}ds+ C\int_0^t\int_\Omega |\mathbf{h}\cdot\mathbf{h}_y|^2\hspace{0.5mm}ds. &
\end{flalign*}

And since we have \eqref{Mtetint}, Gronwalls's inequality then implies
\begin{equation*}
\int_\Omega |V_y -u|^2dy\leq C+C\int_0^t\int_\Omega |\mathbf{h}\cdot\mathbf{h}_y|^2\hspace{0.5mm}ds.
\end{equation*}

In particular,
\begin{equation}
\int_\Omega v_y^2dy\leq C+C\int_0^t\int_\Omega |\mathbf{h}\cdot\mathbf{h}_y|^2\hspace{0.5mm}ds.\label{vy-}
\end{equation}

Following \cite{CW,W}, in order to bound the right hand side of this inequality, we multiply equation \eqref{L1h} by $|\mathbf{h}|^2\mathbf{h}$, integrate over $\Omega\times [0,t]$ and use \eqref{L1v} to obtain
\begin{align*}
&\frac{1}{4}\int_\Omega v|\mathbf{h}|^4dy +\int_0^t \int_\Omega \nu v^{-1}(2|\mathbf{h}\cdot\mathbf{h}_y|^2+|\mathbf{h}|^2|\mathbf{h}_y|^2) dy \hspace{.5mm}ds\\
&\hspace{5mm}=-\frac{3}{4}\int_0^t\int_\Omega u_y|\mathbf{h}|^4dy\hspace{.5mm}ds + \int_0^t \int_\Omega \mathbf{w}_y\cdot|\mathbf{h}|^2\mathbf{h} dy \hspace{.5mm}ds + \frac{1}{4}\int_\Omega v_0|\mathbf{h}_0|^2dy,
\end{align*}
wherein, after some manipulation we get
\begin{equation}
\int_\Omega |\mathbf{h}|^4dy +\int_0^t \int_\Omega |\mathbf{h}\cdot\mathbf{h}_y|^2 dy \hspace{.5mm}ds \leq C.
\end{equation}

Thus, concluding by \eqref{vy-} that
\begin{equation}
\int_\Omega v_y^2dy\leq C.
\end{equation}

After this, the stated bounds on $u$, $\mathbf{w}$ and $\mathbf{h}$ are standard estimates on the parabolic equations and follow by multiplying \eqref{L1u}, \eqref{L1w} and \eqref{L1h} by $u_{yy}$, $\mathbf{w}_{yy}$ and $\mathbf{h}_{yy}$, respectively, integrating by parts and using Young's inequality as usual.

Finally, we are left with \eqref{psiyy}. For this, we differentiate equation \eqref{L1Sch} with respect to $t$, multiply it by $\overline{\psi}_t$ (the complex conjugate of $\psi_t$), take imaginary part and integrate to obtain
\begin{align*}
\int_\Omega |\psi_t|^2 dy &\leq C+C\int_0^t\int_\Omega v_t^2 dy\hspace{.5mm}ds + C\int_0^t\int_\Omega |\psi_t|^2 dy\hspace{.5mm}ds\\
  &= C+C\int_0^t\int_\Omega u_y^2 dy\hspace{.5mm}ds + C\int_0^t\int_\Omega |\psi_t|^2 dy\hspace{.5mm}ds\\
  &\leq C+C\int_0^t\int_\Omega |\psi_t|^2 dy\hspace{.5mm}ds,
\end{align*}
and from Gronwall's inequality we get
\begin{equation}
\int_\Omega |\psi_t|^2 dy\leq C.
\end{equation}

Note that in light of all the estimates we have deduced so far, and in view of equation \eqref{L1Sch}, the $L^2(\Omega)$-norm of $\psi_t$ is equivalent to the $L^2(\Omega)$-norm of $\psi_{yy}$ (it is at this point that we use our assumption that $\psi_0\in H^2(\Omega)$). Thus we conclude that
\begin{equation}
\int_\Omega |\psi_{yy}|^2 dy\leq C.
\end{equation}
\end{proof}

We now turn our attention to the a priori estimates on the derivatives of the temperature.

\begin{lemma}\label{apriori4}
\begin{equation}
C^{-1}\leq\theta\leq C,\label{tetC}
\end{equation}
\begin{equation}
|v_y|\leq C,
\end{equation}
\begin{equation}
\int_\Omega \theta_y^2dy + \int_0^t\int_\Omega(\theta_t^2+\theta_{yy}^2)dy\hspace{.5mm}ds\leq C.\label{tetytetyy}
\end{equation}
\end{lemma}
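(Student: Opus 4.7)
The plan is to follow the strategy of Chen--Wang \cite{CW} and Wang \cite{W} for the planar MHD equations, checking at each step that the short wave--long wave coupling terms do not obstruct the argument. The three conclusions will be proved in the order stated: first the pointwise bound $C^{-1} \leq \theta \leq C$, then $|v_y| \leq C$, and finally the $H^1$ control on $\theta$ together with the $L^2_{t,y}$ bounds on $\theta_t$ and $\theta_{yy}$. Throughout, the coupling contributions are benign: by \eqref{psiinfty} and \eqref{psiyy}, $\psi$ is uniformly bounded and $\psi_y$ is bounded in $L^\infty_t L^2_y$, while hypothesis \eqref{gh} ensures that $g, g', h, h'$ (and their relevant further derivatives) are uniformly bounded wherever they are evaluated, with $g'$ having compact support in $(0,\infty)$ so that one never encounters trouble as $v \to 0$.

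The hardest step is the $L^\infty$ bound on $\theta$. I would work with the scalar equation \eqref{L1Q} for $Q(\theta)$. The key ingredients are: the growth condition $\kappa(\theta) \geq k_1(1+\theta^q)$ with $q \geq 2+2r$ from \eqref{K}; the time-integrated bound $\int_0^t M_\theta(s)\,ds \leq C$ established in \eqref{Mtetint}; and the $L^4_{t,y}$ control on $u_y, \mathbf{w}_y, \mathbf{h}_y$ from Lemma \ref{apriori3}, which places the dissipative sources $\varepsilon u_y^2/v + \mu|\mathbf{w}_y|^2/v + \nu|\mathbf{h}_y|^2/v$ in $L^2_{t,y}$. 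Testing \eqref{L1Q} against high powers of $\theta$ and iterating in Moser fashion then yields $\theta \leq C$; the convective term $\theta p_\theta(v) u_y$ is handled via \eqref{ptetv} and Young's inequality and absorbed by the diffusive contribution. The lower bound $\theta \geq C^{-1}$ follows analogously by working with $\theta^{-1}$ (or $-\log\theta$) and invoking the entropy estimate \eqref{kappatety}.

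Once $\theta \in L^\infty$, I would return to the transformed momentum equation. With $\theta$, $v$, $|\mathbf{h}|$, and $|\psi|$ all uniformly bounded, integrating \eqref{L1V} in time gives a representation of $V_y - u$, and combining this with the explicit formula for $v$ derived in the proof of \eqref{cVC} yields $|v_y(t,y)| \leq C$ via a Gronwall-type argument as in \cite{CW}. The extra coupling term $\alpha(g'(v)h(|\psi|^2))_y$ is treated using the uniform bounds on $g'$, $g''$, $h$, $h'$ together with the $L^\infty_t L^2_y$ bound on $\psi_y$. Finally, for \eqref{tetytetyy}, I would multiply \eqref{L1Q} by $\theta_t$ and integrate over $\Omega \times [0,t]$: the diffusion produces the time derivative of $\int \kappa(\theta)\theta_y^2/(2v)\,dy$, plus correction terms involving $v_t = u_y$ and $\kappa_\theta \theta_t$ which are absorbed by Young's inequality and the bounds on $v_y, u_y, \theta$; the source terms are controlled via Cauchy--Schwarz combined with the $L^4_{t,y}$ bounds from Lemma \ref{apriori3}. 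A Gronwall argument produces $\int \theta_y^2\,dy + \int_0^t\!\int \theta_t^2\,dy\,ds \leq C$, and reading \eqref{L1Q} as $(\kappa\theta_y/v)_y = C_\vartheta(\theta)\theta_t + (\text{controlled terms in } L^2_{t,y})$ upgrades this into the claimed $L^2$ bound on $\theta_{yy}$.
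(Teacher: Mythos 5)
Your overall strategy (follow Chen--Wang and Wang, check that the coupling terms are harmless via \eqref{psiinfty}, \eqref{psiyy} and \eqref{gh}) matches the paper, and your treatment of $|v_y|$ and of \eqref{tetytetyy} is essentially the paper's: the paper multiplies \eqref{L1tet} by $H_t$ with $H(v,\theta)=v^{-1}\int_0^\theta\kappa(\xi)\,d\xi$ rather than by $\theta_t$ alone, but this is the same idea with a weight chosen so that the resulting quantity $Y=\max_t\int(1+\theta^{2q})\theta_y^2\,dy$ also feeds the upper bound on $\theta$ through the interpolation $\Theta\leq C+CY^{\delta_1}$. Your proposal to get $\theta\leq C$ by a separate Moser iteration instead is a legitimate alternative route (the $L^4_{t,y}$ bounds from Lemma \ref{apriori3} do place the dissipative sources in $L^2_{t,y}$, and $\int_0^t M_\theta\,ds\leq C$ gives the needed starting integrability), at the cost of redoing work that the single $X+Y$ estimate delivers in one stroke.

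The genuine gap is the lower bound $\theta\geq C^{-1}$. You assert it ``follows analogously by working with $\theta^{-1}$ (or $-\log\theta$) and invoking the entropy estimate \eqref{kappatety}.'' It does not: \eqref{kappatety} only gives $\int_\Omega(\theta-1-\log\theta)\,dy\leq C$, i.e.\ an $L^1$ bound on $\log\theta$, which controls the measure of the set where $\theta$ is small but yields no pointwise positivity. A downward Moser iteration on $\theta^{-p}$ would need $\int\theta^{-p_0}\,dy<\infty$ for some $p_0>0$ as a starting point, which is not available from the logarithmic bound; moreover the sign-indefinite term $\theta p_\theta(v)u_y$ is not obviously absorbable in that scheme without already knowing more about $\theta$ near zero. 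The paper instead absorbs $\theta p_\theta u_y$ into the viscous dissipation by Young's inequality to obtain the parabolic differential inequality \eqref{ineqtet}, whose zero-order coefficient $\tfrac{v}{2\varepsilon}\theta^2p_\theta^2$ vanishes quadratically as $\theta\to0$, and then applies the maximum principle. This is also why the order of the steps matters: the maximum principle needs bounded coefficients, in particular $|v_y|\leq C$, so the $v_y$ bound must precede the lower bound on $\theta$ — the reverse of the order you propose. Relatedly, your $v_y$ step is under-specified: controlling $\int_0^t p_\theta\theta_y\,ds$ pointwise in $y$ requires interpolating $\theta_y$ against $\theta_{yy}$ (and similarly for $\mathbf{h}_y$, $\psi_y$), so the $L^2_{t,y}$ bound on $\theta_{yy}$ enters the $v_y$ estimate, not only the other way around.
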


\begin{proof}
Following \cite{CW,W}, we set
\begin{align*}
&\Theta:=\max_{(y,t)\in\Omega\times [0,t]}\theta(y,t), & X:=\int_0^T\int_\Omega(1+\theta^{q+r})\theta_t^2dy\hspace{.5mm}ds\\
& Y:=\max_{t\in[0,T]}\int_\Omega(1+\theta^{2q})\theta_y^2 dy. & 
\end{align*}

Using \eqref{K} we have
\begin{align*}
\max_{y\in\Omega}\theta^{(2q+3+r)/2}&\leq \left(\int_\Omega \theta dy\right)^{(2q+3+r)/2}+\frac{2q+3+r}{2}\int_\Omega\theta^{(2q+1+r)/2}|\theta_y|dy\\
             &\leq C+C\left(\int_\Omega \theta^{1+r} dy\right)^{1/2}\left(\int_\Omega \theta^{2q}\theta_y^2 dy\right)^{1//2}\\
             &\leq C+CY^{1/2}.
\end{align*}
Thus,
\begin{equation}
\Theta\leq C+CY^{\delta_1}\label{boundtetY}
\end{equation}
where $\delta_1=(2q+3+r)^{-1}$.

The next step is to show that $X+Y\leq C$. For this, we define $H(v,\theta):=v^{-1}\int_0^\theta \kappa(\xi)d\xi$, so that
\begin{align*}
&H_t=H_v u_y + \frac{\kappa}{v}\theta_t,\\
&H_{ty}=H_v u_{yy} + H_{vv}u_y v_y+ \left(\frac{1}{v}\right)_v\kappa\theta_t v_y + \left(\frac{\kappa}{v}\theta_y\right)_t.
\end{align*}

Rewriting equation \eqref{L1Q} as
\begin{equation}
C_\vartheta(\theta) \theta_t + \theta p_\theta u_y =  \left(\frac{\kappa\theta_y}{v} \right)_y +\frac{\varepsilon u_y^2}{v} + \frac{\mu |\mathbf{w}_y|^2}{v} + \frac{\nu |\mathbf{h}_y|^2}{v}, \label{L1tet}
\end{equation}
we multiply this equation by $H_t$ and integrating by parts we get
\begin{flalign}
&\int_0^t \int_\Omega \Big(C_\vartheta \theta_t + \theta p_\theta u_y -\frac{\varepsilon u_y^2}{v} + \frac{\mu |\mathbf{w}_y|^2}{v} + \frac{\nu |\mathbf{h}_y|^2}{v}\Big)H_t dy\hspace{.5mm}ds &\nonumber\\
&\hspace{60mm}+\int_0^t \int_\Omega \frac{\kappa\theta_y}{v}H_{ty}dy\hspace{.5mm}ds=0.&\label{TetXY}
\end{flalign}

At this point, we observe that from \eqref{Q} we have
\[
\int_0^t\int_\Omega  C_\vartheta \theta_t \frac{\kappa}{v}\theta_t dy\hspace{.5mm}ds \geq M_1 X,
\]
and also
\[
\int_0^T\int_\Omega \frac{\kappa\theta_y}{v}\left(\frac{\kappa}{v}\theta_y\right)_t dy\hspace{.5mm}ds=\int_0^T\frac{d}{dt}\int_\Omega \frac{\kappa^2\theta_y^2}{v^2} dy\hspace{.5mm}ds\geq M_2 Y - C,
\]
for positive constants $M_1$ and $M_2$. As a consequence, in order to show that $X+Y\leq C$ all that is left to do is bound appropriately the rest of the terms in \eqref{TetXY}. Although this is not simple, it follows the same lines as in \cite{CW,W}, and we omit the details.

Now, this together with \eqref{boundtetY} leads to the estimates
\begin{equation}
\theta\leq C,
\end{equation}
and
\begin{equation}
\int_\Omega \theta_y^2dy+\int_0^t\int_\Omega \theta_t^2dy\hspace{.5mm}ds\leq C.
\end{equation}

We finally move on to the last estimate, consisting of a lower bound for the temperature. We have to prove that 
\begin{equation}
C^{-1}\leq \theta(y,t),\label{ctet}
\end{equation}
for a big enough constant $C>0$. In order to show this it suffices to apply the maximum principle (see \cite{PWe}) to equation \eqref{L1tet}. More specifically, note that $\theta$ satisfies the following inequality
\begin{equation}
C_\vartheta \theta_t + \frac{v}{2\varepsilon}\theta^2 p_\theta^2 -\left(\frac{\kappa}{v}\right)_v \theta_y v_y\geq \frac{\kappa\theta_{yy}}{v}. \label{ineqtet}
\end{equation}
This follows from equation \eqref{L1tet} and Young's inequality. In order to apply the maximum principle we have to show that the coefficients of this parabolic inequality are bounded. With the estimates already obtained, we only have to show that $v_y$ is uniformly bounded. Let $V$ be as in \eqref{L1V}. Then,
\begin{align*}
&V_y(y,t)=V_{y}(y,0)+u(y,t)-u_0(y)+\int_0^t(p_v v_y + p_\theta \theta_y +\beta\mathbf{h}\cdot\mathbf{h}_y\\
&\hspace{40mm}-\alpha g''(v)h(|\psi|^2)v_y-2\alpha g'(v)h'(|\psi|^2)\text{Re}(\psi\overline{\psi}_y))ds.
\end{align*}

Squaring this identity and using interpolation inequalities on $\theta_y$ $\mathbf{h}_y$ and $\psi_y$ we get
\begin{align*}
v_y^2&\leq C+ C\int_0^t\int_\Omega(|\theta_{yy}^2+|\mathbf{h}_{yy}|^2+|\psi_{yy}|^2)dy\hspace{.5mm}ds + \int_0^t v_y^2ds\\
   &\leq C+C\int_0^t v_y^2ds,
\end{align*}
which yields, using Gronwall's inequality that
\[
|v_y|\leq C
\].

Consequently, taking into consideration the boundary conditions on $\theta$,  the maximum principle  implies that $\theta$ cannot be zero in finite time, which concludes the proof.
\end{proof}

The estimates from Lemmas \ref{apriori1} through \ref{apriori4} provide the necessary a priori estimates which, in light of the local result from Lemma \ref{local}, guarantee the gobal existence of solutions. The uniqueness of solutions can be carried out in a straightforward way by using some energy estimates similar to those above in combination with Gronwall's inequality applied to the subtraction of the equations satisfied by two possible solutions with the same initial data.

To conclude the proof of Theorem \ref{thm2}, we note that by \eqref{Lag3} and \eqref{cVC}, we can turn back to the original Eulerean coordinates.

\section{Limit equations}\label{limit}

We now move on to the vanishing viscosity problem. Our objective here is to study the limit of solutions of system \eqref{E1rho}-\eqref{E1Sch} as $(\varepsilon,\alpha,\beta,\delta)\to 0$. In order to deal with the convergence issues in the continuity and momentum equations we adapt the framework by Chen and Perepelitsa in \cite{CP}. Regarding the convergence of solutions in the nonlinear Schr\"{o}dinger equation a simple application of Aubin-Lions lemma will suffice. The magnetic description poses no problems as we can deduce good uniform estimates on the magnetic field. Lastly, for the thermal description we adapt some ideas from the study by Feireisl in \cite{Fe} on the full multidimensional compressible Navier-Stokes equations.

In the interest of analysing the limit as $(\varepsilon,\alpha,\beta,\delta)\to 0$, let us make some considerations on the limit equations. In order to fix notation we denote by $\Omega$ the spatial domain where the Eulearean coordinates take values and by $\Omega_y$ the corresponding domain of the Lagrangian coordinate. We begin by the compressible Euler equations.

\subsection{Isentropic Euler equations}

Let us consider the isentropic Euler equations 
\begin{flalign}
 &\rho_t+(\rho u)_x=0,&\label{Eulerrho},\\
 &(\rho u)_t + (\rho u^2 + p_e(\rho) )_x=0 ,&\label{Euleru}
\end{flalign}
where the pressure $p_e(\rho)$ is given by
\begin{equation}
p_e(\rho)=a\rho^\gamma\label{press}
\end{equation}
for some $\gamma>1$, with initial data
\begin{equation}
(\rho(x,0),u(x,0))=(\rho_0(x),u_0(x)).\label{Euler0}
\end{equation}
As it is not possible to avoid the occurrence of vacuum  in this setting, it is convenient to consider the momentum $m=\rho u$ as state variable in place of the velocity. Accordingly, system \eqref{Eulerrho}, \eqref{Euleru} may be written in the form of a nonlinear hyperbolic system of conservation laws
\begin{equation}
U_t+F(U)_x=0\label{conslaw}
\end{equation}
where, $U=(\rho,m)^\top$ and $F(U)=(m,\frac{m^2}{\rho}+p_e)$.

Let us recall that a pair of functions $(\eta,q):\mathbb{R}_+^2\to \mathbb{R}^2$ is called an {\sl entropy-entropy flux pair} (or simply entropy pair) of system \eqref{conslaw} provided that they satisfy
\begin{equation}
\nabla q(\mathbf{U})=\nabla\eta(\mathbf{U})\nabla F(\mathbf{U}).\label{entropy'}
\end{equation}
An entropy pair for \eqref{Eulerrho}, \eqref{Euleru} is said to be convex if the Hessian $\nabla^2\eta(\rho,m)\geq 0$, and $\eta$ is called a {\sl weak entropy} if
\[
\lim_{\substack{\rho\to 0,\\\frac{m}{\rho}}=\text{const.}}\eta(\rho,m)=0.
\]
A very important example of weak entropy pair for \eqref{Eulerrho}, \eqref{Euleru} is given by the {\sl mechanical energy} $\eta^*$ and the {\sl mechanical energy flux} $q^*$:
\begin{equation}
\eta^*(\rho,m)=\frac{1}{2}\frac{m^2}{\rho}+\rho P_e(\rho),\hspace{10mm}q^*(\rho,m)=\frac{1}{2}\frac{m^3}{\rho^2}+mP_e(\rho)+\rho m P_e'(\rho),
\end{equation}
where, as before, $P_e(\rho)$ is the elastic potential
\[
P_e(\rho)=\frac{a}{\gamma-1}\rho^{\gamma-1}.
\]
The {\sl total mechanical energy} for \eqref{Eulerrho}, \eqref{Euleru} is
\begin{equation}
E[\rho,u](t):=\int_\Omega \eta^*(\rho,m)dx=\int_\Omega \left( \frac{1}{2}\rho u^2 + \frac{a}{\gamma-1}\rho^\gamma \right) dx.\label{totmechenergy}
\end{equation}

Relation \eqref{entropy'} may be written in the variables $(\rho,u)$ as the wave equation
\begin{equation}
\begin{cases}
\eta_{\rho\rho}-\frac{p_e'(\rho)}{\rho^2}\eta_{uu}=0, &\rho>0\\
\eta|_{\rho=0}=0. & 
\end{cases}
\end{equation}
and consequently, any weak entropy pair $(\eta,q)$ can be represented by
\[
\begin{cases}
\eta^\zeta(\rho,\rho u)=\int_\mathbb{R}\chi(\rho;s-u)\zeta(s)ds, & \\
q^\zeta(\rho,\rho u)=\int_\mathbb{R} (\vartheta s +(1+\vartheta)u)\chi(\rho;s-u)\zeta(s)ds, &\vartheta=\frac{\gamma-1}{2},
\end{cases}
\]
for any continuous function $\zeta(s)$, where $\chi(\rho,u;s)=\chi(\rho;s-u)$ is determined by
\begin{equation}
\begin{cases}
\chi_{\rho\rho}-\frac{p_e'(\rho)}{\rho^2}\chi_{uu}=0, &\\
\chi(0,u;s)=0, &\chi_\rho(0,u;s)=\delta_{u=s}. 
\end{cases}
\end{equation}

For the $\gamma$-law case, where the pressure is given by \eqref{press}, the weak entropy kernel is given by
\begin{equation}
\chi(\rho,u;s)=[\rho^{2\vartheta}-(s-u)^2]_+^\Lambda,\hspace{10mm}\Lambda=\frac{3-\gamma}{2(\gamma-1)},\label{entrkernel}
\end{equation}
and the corresponding weak entropy pairs are given by
\begin{equation}
\begin{cases}
\eta^\zeta(\rho,\rho u)=\rho\int_{-1}^1 \zeta(u+\rho^{\vartheta}s)[1-s^2]_+^\Lambda ds, & \\
q^\zeta(\rho,\rho u)=\rho\int_{-1}^1 (u+\vartheta\rho^\vartheta s)\zeta(u+\rho^{\vartheta}s)[1-s^2]_+^\Lambda ds. &
\end{cases}\label{entropexpl}
\end{equation}

A direct consequence of this representation is the following (\cite[Lemma 2.1]{CP}), which we state for later reference.
\begin{lemma}\label{boundsetaq}
For a $C^2$ function $\zeta:\mathbb{R}\to\mathbb{R}$ compactly supported in $[a,b]$, we have
\[
\text{supp} \eta^\zeta,\text{ supp}q^\zeta\subseteq \{ (\rho,\rho u):\rho^\vartheta+u\geq a, u-\rho^\vartheta\leq b \}.
\]
Furthermore, there exists a constant $C_\zeta>0$ such that, for any $\rho\geq 0$ and $u\in\mathbb{R}$, we have
\begin{itemize}
\item{(i)} For $\gamma\in (1,3]$,
\[
|\eta^\zeta(\rho,m)|+|q^\zeta(\rho,m)|\leq C_\zeta\rho.
\]
\item{(ii)} For $\gamma>3$,
\[
|\eta^\zeta(\rho,m)|\leq C_\zeta \rho,\hspace{10mm}|q^\zeta(\rho,m)|\leq C_\zeta\rho\max\{1,\rho^\vartheta \}.
\]
\item[(iii)] If $\eta_n^\zeta$ is considered as a function of $(\rho,m)$, $m=\rho u$, then
\[
|\eta_{m}^\zeta(\rho,m)|+|\rho\eta_{mm}^\zeta(\rho,m)|\leq C_\zeta,
\]
and if $\eta_n^\zeta$ is considered as a function of $(\rho,u)$, then
\[
|\eta_{mu}^\zeta(\rho,m)|+|\rho^{1-\vartheta}\eta_{m\rho}^\zeta(\rho,m)|\leq C_\zeta.
\]
\end{itemize} 
\end{lemma}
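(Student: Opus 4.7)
The plan is to work directly from the explicit representation formulas \eqref{entropexpl} for $(\eta^\zeta,q^\zeta)$ and exploit two observations: (a) that $\zeta(u+\rho^\vartheta s)$ vanishes unless $u+\rho^\vartheta s\in[a,b]$, and (b) that under the substitution $\tau=u+\rho^\vartheta s$ the $s$-measure of the effective integration region shrinks like $\rho^{-\vartheta}$ for large $\rho$. These two facts, combined with $\int_{-1}^1(1-s^2)^\Lambda\,ds<\infty$ (since $\Lambda>-1$ whenever $\gamma>1$), are the core of every estimate in the lemma.

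For the support statement, I would simply note that the integrand in $\eta^\zeta$ vanishes unless $u+\rho^\vartheta s\in[a,b]$ for some $s\in[-1,1]$; since the image of $s\mapsto u+\rho^\vartheta s$ on $[-1,1]$ is $[u-\rho^\vartheta,u+\rho^\vartheta]$, having nontrivial overlap with $[a,b]$ is precisely $u-\rho^\vartheta\le b$ and $u+\rho^\vartheta\ge a$, and the same reasoning applies to $q^\zeta$. This immediately yields the a priori control $|u|\le\max\{|a|,|b|\}+\rho^\vartheta$ on the support, which I will use throughout.

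For the pointwise bounds (i)–(ii), I would estimate $\eta^\zeta$ in two regimes. When $\rho\le 1$ I bound $[1-s^2]_+^\Lambda$ by an integrable majorant and use $\|\zeta\|_\infty$ to get $|\eta^\zeta|\le C\rho$. When $\rho\ge 1$ I change variables $\tau=u+\rho^\vartheta s$ so that $ds=d\tau/\rho^\vartheta$ and the $\tau$-integration is confined to $[a,b]$, giving $|\eta^\zeta|\le C\rho^{1-\vartheta}\le C\rho$ (the last step uses $\vartheta\ge 0$). For $q^\zeta$ the extra factor rewrites as
\[
u+\vartheta\rho^\vartheta s=(1-\vartheta)u+\vartheta\tau,
\]
whose modulus is at most $|1-\vartheta|(\max\{|a|,|b|\}+\rho^\vartheta)+|\vartheta|\max\{|a|,|b|\}$ on the support. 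When $\gamma\le 3$ one has $\vartheta\le 1$ and the $\rho^\vartheta$ contribution to $|q^\zeta|$ is exactly cancelled by the $\rho^{-\vartheta}$ gain coming from the length of the integration region (for large $\rho$), whereas for small $\rho$ one has $\rho^\vartheta\le 1$ so the bound $|q^\zeta|\le C\rho$ follows trivially. When $\gamma>3$ the extra factor $\rho^\vartheta$ cannot be absorbed and one just reads off $|q^\zeta|\le C\rho(1+\rho^\vartheta)\le C\rho\max\{1,\rho^\vartheta\}$.

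For part (iii) I would differentiate under the integral sign using $u=m/\rho$ and $\partial_m u=1/\rho$. A direct computation gives
\[
\eta_m^\zeta=\int_{-1}^1\zeta'(u+\rho^\vartheta s)[1-s^2]_+^\Lambda\,ds,\qquad \rho\,\eta_{mm}^\zeta=\int_{-1}^1\zeta''(u+\rho^\vartheta s)[1-s^2]_+^\Lambda\,ds,
\]
and the two bounds follow at once from $\|\zeta'\|_\infty,\|\zeta''\|_\infty<\infty$ and the finiteness of $\int_{-1}^1(1-s^2)^\Lambda\,ds$. Viewing $\eta_m^\zeta$ as a function of $(\rho,u)$, differentiating in $u$ at fixed $\rho$ gives the same integrand as $\rho\eta_{mm}^\zeta$ so $|\eta_{mu}^\zeta|\le C_\zeta$, while differentiating in $\rho$ at fixed $u$ produces a factor $\vartheta\rho^{\vartheta-1}s$, yielding $\rho^{1-\vartheta}\eta_{m\rho}^\zeta=\vartheta\int_{-1}^1\zeta''(u+\rho^\vartheta s)\,s\,[1-s^2]_+^\Lambda\,ds$, which is again $O(1)$. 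The main technical obstacle in the whole argument is really the sharp estimate on $q^\zeta$ in case (i): a naive triangle-inequality bound yields $C\rho+C\rho^{1+\vartheta}$, and one must exploit the $\rho^{-\vartheta}$ shrinking of the effective integration interval to recover the clean bound $C\rho$; the rest is bookkeeping.
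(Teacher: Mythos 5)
Your proof is correct and takes exactly the route the paper intends: the paper states the lemma as ``a direct consequence of this representation'' \eqref{entropexpl} and cites \cite[Lemma 2.1]{CP} without reproducing the argument, and your computation --- reading the support off the range of $s\mapsto u+\rho^\vartheta s$, using the substitution $\tau=u+\rho^\vartheta s$ (valid in case (i) since $\Lambda\geq 0$ there, so $[1-s^2]_+^\Lambda\leq 1$) to cancel the extra $\rho^\vartheta$ in $q^\zeta$, and differentiating under the integral sign for (iii) --- is precisely the omitted verification. No gaps.
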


In light of these considerations we state the following concept, taken from \cite{CP}.

\begin{definition}
Let $(\rho_0,u_0)$ be given initial data such that $E[\rho_0,u_0]\leq E_0<\infty$. A pair $(\rho,u):\Omega\times[0,T)\to[0,\infty)\times\mathbb{R}$ is called a  finite-energy entropy solution of \eqref{Eulerrho},\eqref{Euleru},\eqref{Euler0} if the following hold:
\begin{itemize}
\item There is a locally bounded function $C(E,t)\geq 0$ such that
\[
E[\rho,u](t)\leq C(E_0,t).
\]
\item $(\rho,u)$ satisfies \eqref{Eulerrho} and \eqref{Euleru} in the sense of distributions and, more generally, 
\[
\eta^\zeta(\rho,u)_t+q^\zeta(\rho,u)_x\leq 0,
\]
in the sense of distributions, for the test functions $\zeta(s)\in\{ \pm 1, \pm s, s^2 \}$.
\item The initial data are attained in the sense of distributions.
\end{itemize}
\end{definition}

The first step in our analysis is to show strong convergence of a subsequence of $(\rho^\varepsilon,\rho^\varepsilon u^\varepsilon)$ to a finite-energy entropy solution to \eqref{Eulerrho},\eqref{Euleru}. For this we adapt the compactness scheme in \cite{CP}, which is based on the compensated compactness method (cf. \cite{T,T',Mu,DP1,DP,C,C',DC,LPS,LPT,LW}).

\subsection{Transverse velocity field and magnetic field}

We move on to the limit equations \eqref{E1winfty} and \eqref{E1hinfty} for the transverse velocity field and the magnetic field. As $\mu$ and $\nu$ are left fixed independently of $\varepsilon$ we can deduce some satisfactory uniform estimates on $\mathbf{w}_x$ and on $\mathbf{h}_x$ that permit the passage to the limit for the sequence $(\mathbf{w}^\varepsilon,\mathbf{h}^\varepsilon)$ without any major complications, once we have shown that $\rho^\varepsilon$ and $\rho^\varepsilon u^\varepsilon$ converge strongly.

Regarding equation \eqref{E1hinfty}, we see that we are left with a stationary equation and therefore the initial condition loses its meaning. However, note that from equation \eqref{E1h} we have that
\[
\int_\Omega \beta\mathbf{h}^\varepsilon\varphi dx-\int_\Omega \beta \mathbf{h}_0^\varepsilon \varphi dx - \int_0^t\int_\Omega(\beta u^\varepsilon\mathbf{h}^\varepsilon - \beta\mathbf{w}^\varepsilon)\varphi_x dx\hspace{.5mm}ds =-\int_0^t\int_\Omega \nu\mathbf{h}_x^\varepsilon \varphi_x dx\hspace{.5mm}ds,
\] 
for any smooth test function $\varphi=\varphi(x)$ with compact support in $\Omega$.

A couple of energy estimates based on the energy identity \eqref{difE1} and on equation \eqref{E1Q} (which will be deduced later) as well as an interpolation inequality for $u^\varepsilon$ and for $\mathbf{w}^\varepsilon$ show that $\beta \mathbf{h}^\varepsilon\to 0$ in $L^\infty(0,T;L^2(\Omega))$ and $\beta(u^\varepsilon\mathbf{h}^\varepsilon,\mathbf{w}^\varepsilon)\to 0$ in $L^1(\Omega\times (0,T))$ as $\varepsilon\to 0$ with $\beta=o(\varepsilon)$. By the same token we can a assume that $\mathbf{h}_x^\varepsilon$ converges weakly to $\mathbf{h}_x$ for some $\mathbf{h}\in L^2(0,T;H_0^1(\Omega))$. As a result, in the limit we have
\[
\lim_{\varepsilon\to 0}\int_\Omega \beta\mathbf{h}_0^\varepsilon \varphi ds  =\int_0^t\int_\Omega \nu\mathbf{h}_x \varphi_x dx\hspace{.5mm}ds.
\] 

For this reason, we are compelled to impose that $\beta \mathbf{h}_0^\varepsilon \to 0$ in the sense of distributions, in which case we would have that $\mathbf{h}_x=0$, thus forcing $\mathbf{h}$ to be identically equal to zero.

As for the limit equation \eqref{E1winfty}, the same energy estimates allow us to assume that $\mathbf{w}^\varepsilon\to \mathbf{w}$ weakly in $L^2(0,T;H_0^1(\Omega))$ and provided that $\rho^\varepsilon$ and $\rho^\varepsilon u^\varepsilon$ converge strongly, we can conclude that the limit equation \eqref{E1winfty} is satisfied in the sense of distributions.  

\subsection{Thermal energy}

The uniform estimates that we obtain further ahead, guarantee that $\varepsilon |u^\varepsilon|^2$, $\mu|\mathbf{w}^\varepsilon|^2$ and $\nu|\mathbf{h}^\varepsilon|^2$ are bounded in $L^1(\Omega\times(0,T))$. Nonetheless, this is the best uniform estimate that we can hope to obtain on the derivatives of $u$, $\mathbf{w}$ and $\mathbf{h}$. This means that, consistency becomes an issue in the thermal energy limit equation \eqref{E1Qinfty} as we cannot guarantee that the sequence (or any subsequence of) $\varepsilon |u^\varepsilon|^2+\mu|\mathbf{w}^\varepsilon|^2+\nu|\mathbf{h}^\varepsilon|^2$ converges to anything other than possibly a positive Radon measure. For this reason we do not expect equation \eqref{E1Qinfty} to be satisfied and the best we can aim to obtain when taking the limit as $\varepsilon\to 0$ in equation \eqref{E1Q} is an inequality. 

On the bright side we note that given a nonnegative smooth test function $\varphi$, the function $f\to \int_0^t\int_\Omega |f|^2\varphi dx\hspace{.5mm}ds$ defined for $f\in L^2(\Omega\times (0,T))$ and taking values in $[0,\infty)$ may be regarded as the squared norm in the weighted $L_\varphi^2$ space. As we have that the sequence $(\mathbf{w}_x^\varepsilon,\mathbf{h}_x^\varepsilon)$ is weakly convergent in $L^2(\Omega\times(0,T))$ (and therefore, also in $ L_\varphi^2(\Omega\times(0,T))$) we see that 
\[
\liminf_{\varepsilon\to 0}\int_0^t\int_\Omega (\mu|\mathbf{w}_x^\varepsilon|^2 + \nu|\mathbf{h}_x^\varepsilon|^2)\varphi dx \hspace{.5mm} ds \geq \int_0^t\int_\Omega (\mu|\mathbf{w}_x|^2 + \nu|\mathbf{h}_x|^2)\varphi dx ds.
\]

Recall that, in fact, the limit magnetic field $\mathbf{h}$ has to be equal to zero. With this in mind, we intend to show that, in the limit, the following inequality
\begin{equation}
(\rho Q(\theta))_t + (\rho Q(\theta)\mathbf{u})_x \geq (\kappa \theta_x)_x + \mu |\mathbf{w}_x|^2,\label{E1Qineq}
\end{equation}
is satisfied in the sense of distributions by the limit functions. In the process we are going to show that the following inequality also holds
\begin{align}
&\int_\Omega \rho\left(P_e(\rho) + Q(\theta) + \frac{1}{2}|u|^2+\frac{1}{2}|\mathbf{w}|^2 \right)(t)dx\nonumber\\
&\hspace{40mm}\leq \int_\Omega \left(\rho_0P_e(\rho_0) + \rho_0Q(\theta_0) + \frac{1}{2}\frac{m_0^2}{\rho_0}+\rho_0\frac{1}{2}|\mathbf{w}|^2 \right)dx.\label{E1Einftyweak}
\end{align}

This is nothing other than to say, in the notation of \eqref{totmechenergy}, that
\[
E[\rho,u](t)+||(\rho Q(\theta),\rho|\mathbf{w}|^2)(t)||_{L^1(\Omega)}\leq E[\rho_0,u_0](0)+||(\rho_0 Q(\theta_0),\rho_0|\mathbf{w_0}|^2)||_{L^1(\Omega)},
\]
which compensates, in some way, the ``loss of information'' resulting from considering an inequality instead of an identity in the limit thermal energy equation.

With this in mind we state the following definition.
\begin{definition}
We say that $(\rho,u,\mathbf{w},\mathbf{h},\theta)$ constitute a varional solution of equation \eqref{E1Qinfty} with initial data $\theta|_{t=0}=\theta_0$ provided that it satisfies both 
\begin{flalign}
&\int_\Omega \left(\rho\left(e(\rho,\theta) +\tfrac{1}{2}|u|^2 + \tfrac{1}{2}|\mathbf{w}|^2  \right)+\tfrac{\beta}{2}|\mathbf{h}|^2\right)(t)  dx \nonumber\\
&\hspace{25mm}\leq \int_\Omega \rho_0\left(e(\rho_0,\theta_0) +\tfrac{1}{2}|u_0|^2 + \tfrac{1}{2}|\mathbf{w}_0|^2+\tfrac{\beta}{2}|\mathbf{h}_0|^2  \right)  dx.&&\label{defenergyfinal}
\end{flalign}
and
\begin{flalign}
&\int_0^T \int_\Omega \big(\rho Q(\theta)\varphi_t + \rho u Q(\theta)\varphi_x + \mathcal{K}(\theta)\varphi_{xx}\big) dx ds &\nonumber\\
&\hspace{15mm}\leq -\int_0^T\int_\Omega (\mu|\mathbf{w}_x|^2\varphi +\nu |\mathbf{h}_x|^2)dxds -\int_\Omega \rho_0 Q(\theta_0)\varphi|_{t=0}dx,&\label{defE1Qinftyineq}
\end{flalign}
for any test function $\varphi$ such that
\begin{equation}
\varphi\geq 0,\hspace{5mm} \varphi\in W^{2,\infty}(\Omega\times(0,T)),\hspace{5mm} \psi_x|_{\partial \Omega}=0,\hspace{5mm} \text{supp}\varphi\subseteq \overline{\Omega}\times[0,T).\label{deftest}
\end{equation}
\end{definition}

This is in accordance with the definition of variational solution of the thermal energy equation considered by Feireisl in \cite{Fe} (see \cite[Definition 4.5]{Fe}).

Let us point out, that even by considering the inequality \eqref{E1Qineq} in place of \eqref{E1Qinfty}, the task of showing consistency is not simple as $Q$ and $\kappa$ are nonlinear functions of $\theta$. This means that we have to show strong convergence of the sequence $\theta^\varepsilon$. 

For this we adapt an idea in \cite{Fe} which can be divided into two steps. First, using uniform estimates and some careful analysis we can show that $Q(\theta^\varepsilon)$ converges pointwise to some limit $\overline{Q}$, in the set where $\rho$ (the limit density) is positive. As $Q$ is a strictly increasing function, we can write $\overline{Q}$ as $\overline{Q}=Q(\hspace{.5mm}\overline{\theta}\hspace{.5mm})$, i.e., $\overline{\theta}=Q^{-1}(\overline{Q})$. Then, using \eqref{Q} we see that
\begin{align*}
0&=\lim_{\varepsilon\to 0}\int_0^T\int_\Omega (Q(\theta^\varepsilon)-Q(\overline{\theta}))(\theta^\varepsilon-\overline{\theta})\mathbbm{1}_{\{\rho>0\}}dx\hspace{.5mm}ds\\
&\geq \lim_{\varepsilon\to 0}C^{-1}\int_0^T\int_\Omega(\theta^\varepsilon-\overline{\theta})^2\mathbbm{1}_{\{\rho>0\}}dx\hspace{.5mm}ds,
\end{align*}
so that $\theta^\varepsilon$ also converges pointwise to $\overline{\theta}$ in the set $\{\rho>0\}$. After this, we adapt a clever argument from \cite{Fe} to show that the function $\mathcal{K}(\theta):=\int_0^\theta \kappa(z)dz$ converges weakly to some $\overline{\mathcal{K}}$. Accordingly, $\overline{\mathcal{K}}=\mathcal{K}(\overline{\theta})$ in the set where $\rho>0$. Thus, if we define $\theta:=\mathcal{K}^{-1}(\hspace{.5mm}\overline{\mathcal{K}}\hspace{.5mm})$ we then have that $\theta=\overline{\theta}$ in the set $\{\rho>0\}$ and we can pass to the limit in equation \eqref{E1Q} in order to conclude that $\theta$ satisfies inequality \eqref{E1Qineq}.

We will fill in the details of this procedure later.

\subsection{Nonlinear Schr\"{o}dinger equation}

Finally, we consider the limit equation \eqref{E1Schinfty}. Let us recall that $\psi:\Omega_y\times(0,T)\to\mathbb{C}$ is called a weak solution of \eqref{E1Schinfty} with initial data $\psi|_{t=0}=\psi_0$ if $\psi\in L^\infty(0,T;H_0^1(\Omega_y))\cap W^{1,\infty}(0,T;H^{-1}(\Omega_y))$, \eqref{E1Schinfty} is satisfied in $H^{-1}(\Omega_y)$ for each $t\in (0,T)$ and the initial data is attained in the sense of distributions. Existence and uniqueness of global weak solutions to \eqref{E1Schinfty} with initial data $\psi_0\in H^1(\Omega)$ is a well known result (see \cite{K},\cite{Ca}).

Assuming, as before, that $\alpha=o(\varepsilon^{1/2})$, the energy identity \eqref{difE1} yields uniform estimates on the $L^\infty(0,T;L^4(\Omega_y)\cap H_0^1(\Omega_y))$ norm of $\psi^\varepsilon$. In view of our hypotheses \eqref{gh} on the coupling, functions a direct application of Aubin-Lions lemma (see \cite{JLi,Au,Si}) allows us to pass to the limit in equation \eqref{L1Sch}.

As for the initial data, we only have to assume that $\psi_0^\varepsilon\to\psi_0$ in $H_0^1$ as $\varepsilon \to 0$ for the argument above to hold.

\section{Uniform estimates}\label{estimates}

Our goal now, is to deduce some uniform estimates that allow us to proceed as sketched above. They are divided into several lemmas. Lemmas \ref{uniform1} through \ref{uniform4} are inspired by their analogues contained in \cite{CP}, although with several improvements in order to include the thermal description, the magnetic field and the coupling terms. To avoid the overload of notation, in this Section we denote by $(\rho,u,\mathbf{w},\mathbf{h},\theta,\psi)$ a solution of \eqref{E1rho}-\eqref{E1Sch} with initial conditions $(\rho_0,u_0,\mathbf{w}_0,\mathbf{h}_0,\theta_0,\psi_0)$. We also assume, without loss of generality that $\Omega=(0,1)$. The estimates below are uniform in the sense that the bounding constants do not depend on $\varepsilon$ (and hence nor on $\beta$, $\alpha$ or $\delta$). To this end, in what follows $C$ will stand for a universal constant independent of $\varepsilon$. We also assume that $\alpha=o(\varepsilon^{1/2})$, $\beta=0(\varepsilon)$ and $\delta=o(\varepsilon)$ as $\varepsilon\to 0$, and that $\mu$ and $\nu$ are fixed positive constants independent of $\varepsilon$ and that $\kappa$ satisfies \eqref{K}, also independently of $\varepsilon$.

We begin with the following basic energy estimate.

\begin{lemma}\label{uniform1}
Let $\delta=o(\varepsilon)$. Assume that 
\[
C_0^{-1}\leq \int_0^1 \rho_0 dx\leq C_0,\hspace{10mm} -\int_\Omega \rho_0 s(\rho_0,\theta_0)dx\leq C_0
\]
where $s$ is the entropy given by \eqref{entropy}, and that
\begin{align*}
&\int_\Omega\Big(\rho_0\Big(e(\rho_0,\theta_0)+ \frac{1}{2}_0u^2+\frac{1}{2}|\mathbf{w}_0|^2 \Big) +\frac{\beta}{2}|\mathbf{h}_0|^2 \Big)dx \\
&\hspace{30mm}+ \int_{\Omega_y}\Big(\frac{1}{2}|\psi_{0y}|^2+\frac{1}{4}|\psi_0|^4+\alpha g(v_0)h(|\psi_0|^2)\Big)dy \leq C_0,
\end{align*}
where $C_0>0$ is independent of $\varepsilon$. Then, there exists $C=C(C_0)>0$, independent of $\varepsilon$ such that 
\begin{flalign}
&\int_\Omega\Big(\rho\Big(e(\rho,\theta)+ \frac{1}{2}u^2+\frac{1}{2}|\mathbf{w}|^2 \Big) +\frac{\beta}{2}|\mathbf{h}|^2 \Big)dx \nonumber\\
&\hspace{15mm}+ \int_{\Omega_y}\Big(\frac{1}{2}|\psi_y|^2+\frac{1}{4}|\psi|^4+\alpha g(v)h(|\psi|^2)\Big)dy \leq C.&&\label{unifE1}
\end{flalign}
Also,
\begin{equation}
\int_\Omega \rho(\theta-1-\log \theta)dx+ \int_0^t\int_\Omega \Big( \frac{\kappa \theta_y^2}{\theta^2} + \varepsilon u_x^2 + \mu |\mathbf{w}_x|^2+\nu|\mathbf{h}_x|^2 \Big)dx\hspace{.5mm}ds  \leq C.\label{entrunif}
\end{equation}
\end{lemma}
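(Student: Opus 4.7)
The proof naturally splits into two independent parts corresponding to the two displayed estimates. For \eqref{unifE1} the plan is to integrate the differential energy identity \eqref{difE1} over $\Omega\times(0,t)$. All flux contributions vanish at the boundary: the Eulerian flux because $u,\mathbf{w},\mathbf{h}$ and $\theta_x$ vanish on $\partial\Omega$ by \eqref{E10}, and the Lagrangian flux $\overline{\psi}_t\psi_y+\psi_t\overline{\psi}_y$ because $\psi|_{\partial\Omega_y}=0$. What remains is that the total (hydrodynamic + Schr\"odinger) energy at time $t$ equals its initial value, bounded by $C_0$. Since every summand is non-negative (elastic $P_e\geq 0$ and thermal $Q\geq 0$ by \eqref{elpot}, \eqref{Q}; kinetic, magnetic, and $|\psi_y|^2$, $|\psi|^4$, $\alpha g(v)h(|\psi|^2)\geq 0$ by \eqref{gh}), estimate \eqref{unifE1} follows at once.

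For \eqref{entrunif} I would combine an entropy balance with a direct integration of the thermal energy equation. Using $s_\rho=-\delta p_\theta/\rho^2$ and $s_\theta=C_\vartheta/\theta$ from \eqref{entropydef} (for the $\delta$-modified pressure), continuity \eqref{E1rho}, and \eqref{E1Q}, the $\delta p_\theta u_x$ contributions cancel exactly and one obtains
\[
(\rho s)_t + (\rho u s)_x = \left(\frac{\kappa\theta_x}{\theta}\right)_x + \frac{\kappa\theta_x^2}{\theta^2} + \frac{\varepsilon u_x^2+\mu|\mathbf{w}_x|^2+\nu|\mathbf{h}_x|^2}{\theta}.
\]
Integrating over $\Omega\times(0,t)$ the flux disappears ($\theta_x|_{\partial\Omega}=0$), and $\int\rho s(t)\,dx-\int\rho_0 s_0\,dx$ equals the time integral of the non-negative right-hand side. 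The growth conditions \eqref{Q}, \eqref{ptet} together with \eqref{unifE1} give an upper bound on $\int\rho s(t)dx$, yielding the $\kappa\theta_x^2/\theta^2$ piece of \eqref{entrunif}; the matching lower bound $\int\rho s(t)\,dx\geq-C_0-C$ combined with the structure \eqref{entropy} of $s$ controls $\int\rho(-\log\theta)\,dx$, and together with mass conservation and the energy-bounded $\int\rho\theta\,dx$ this yields $\int\rho(\theta-1-\log\theta)\,dx\leq C$. For the three remaining dissipation terms I integrate \eqref{E1Q} itself over $\Omega\times(0,t)$: the insulated boundary kills $(\kappa\theta_x)_x$ and the convective term, giving
\[
\int_0^t\!\!\int_\Omega(\varepsilon u_x^2+\mu|\mathbf{w}_x|^2+\nu|\mathbf{h}_x|^2)dx\,ds = \int\rho Q(\theta)(t)dx - \int\rho_0 Q(\theta_0)dx + \int_0^t\!\!\int_\Omega \delta\theta p_\theta u_x\,dx\,ds,
\]
with the first two boundary-in-time terms bounded by \eqref{unifE1}. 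Young's inequality absorbs half of the $\varepsilon u_x^2$ into the left-hand side and leaves a term of size $\tfrac{\delta^2}{\varepsilon}\int\int\theta^2 p_\theta^2\,dx\,ds$, whose prefactor vanishes under the scaling $\delta=o(\varepsilon)$.

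The main obstacle is therefore to obtain an $\varepsilon$-uniform bound on $\int\int\theta^2 p_\theta^2\,dx\,ds$. By \eqref{ptet} this is controlled by $\int\int(\theta^2+\rho^{2\Gamma}\theta^2)\,dx\,ds$ with $2\Gamma\leq\gamma$. The plan is to use the entropy-derived bound $\int_0^t\int(1+\theta^{q-2})|\theta_x|^2\,dx\,ds\leq C$ — available precisely because $\kappa\geq k_1(1+\theta^q)$ with $q\geq 2+2r$ in \eqref{K} — which places $\theta^{q/2}$ in $L^2(0,t;H^1(\Omega))$ and, by the one-dimensional Sobolev embedding $H^1(\Omega)\hookrightarrow L^\infty(\Omega)$, in $L^2(0,t;L^\infty(\Omega))$. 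Combined with the energy controls $\int\rho^\gamma\leq C$ and $\int\rho\theta^{1+r}\leq C$ that follow from \eqref{unifE1}, this closes the estimate and yields \eqref{entrunif}. The scalings $\alpha=o(\varepsilon^{1/2})$ and $\beta=o(\varepsilon)$ do not enter here, but are earmarked for the subsequent mechanical-energy-type estimates where the Schr\"odinger and magnetic couplings must be absorbed.
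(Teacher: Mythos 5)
Your overall strategy coincides with the paper's: \eqref{unifE1} from integrating the energy identity \eqref{difE1} with vanishing boundary fluxes; the entropy identity to get $\int_0^t\int\kappa\theta_x^2/\theta^2$ and $\int\rho(\theta-1-\log\theta)$; and then a direct integration of \eqref{E1Q}, with Young's inequality absorbing $\tfrac{\varepsilon}{2}u_x^2$ and leaving a term of order $\tfrac{\delta^2}{\varepsilon}\int_0^t\int\theta^2p_\theta(\rho)^2$, killed by $\delta=o(\varepsilon)$ once $\int_0^t\int\theta^2p_\theta^2\leq C$ is known. The one place where your argument does not close as written is precisely that last uniform bound. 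You control $\int_0^t\int\theta^2(1+\rho^{2\Gamma})$ by $\int_0^t M_\theta(s)^2\,ds$ with $M_\theta(s)=\max_x\theta(x,s)$, and you propose to bound $M_\theta$ via the embedding $H^1(\Omega)\hookrightarrow L^\infty(\Omega)$ applied to $\theta^{q/2}$. But at this stage only the \emph{gradient} part of that $H^1$ norm is controlled by the entropy dissipation; the $L^2$-in-space norm of $\theta^{q/2}$, i.e.\ $\int_0^t\int\theta^{q}$, is not available --- indeed $\int_0^t\int\theta^{q+1}\leq C$ is the content of the later Lemma \ref{uniform5}, which itself relies on the present lemma. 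A bounded gradient alone does not bound the supremum (constants are the obstruction), and the weighted bounds $\int\rho^\gamma\leq C$, $\int\rho\,\theta^{1+r}\leq C$ do not convert into unweighted spatial norms of $\theta$ because vacuum is not excluded.

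The missing ingredient, which is what the paper uses, is a pointwise anchor obtained by a mean-value argument over the measure $\rho\,dx$: since $\int_\Omega\rho\,\theta\,dx\leq C$ by \eqref{unifE1} and \eqref{Q}, while $\int_\Omega\rho\,dx=\int_\Omega\rho_0\,dx\geq C_0^{-1}$ by \eqref{masscons}, for a.e.\ $t$ there is $b(t)\in\Omega$ with $\theta(b(t),t)\leq C$. Then $M_\theta(t)\leq\theta(b(t),t)+\int_\Omega|\theta_x|\,dx\leq C+\big(\int_\Omega\theta^2/\kappa\,dx\big)^{1/2}\big(\int_\Omega\kappa\theta_x^2/\theta^2\,dx\big)^{1/2}$, and $\theta^2/\kappa\leq C$ by the lower bound in \eqref{K} with $q\geq 2$, so $\int_0^TM_\theta(s)^2\,ds\leq C$; this is exactly \eqref{EMtetint} (cf.\ \eqref{Mtetint}). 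With this substitution for your Sobolev step, the rest of your argument goes through and reproduces the paper's proof.
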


\begin{proof}
First, \eqref{unifE1} follows directly from the energy identity \eqref{difE1}.

Second, from equation \eqref{E1rho} we have that 
\begin{equation}
\int_\Omega \rho dx = \int_\Omega \rho_0 dx, \label{masscons}
\end{equation}

Now, using relations \eqref{entropydef} we see that the entropy $s$ satisfies the following equation
\begin{align}
&(\rho s)_t + (\rho u s)_x-\left(\frac{\kappa \theta_x}{\theta} \right)_x=\frac{\kappa \theta_x^2}{\theta^2}+ \frac{\varepsilon u_x^2}{\theta}+\frac{\mu |\mathbf{w}_x|^2}{\theta}+\frac{\nu |\mathbf{h}|_x^2}{\theta}\label{E1s}
\end{align}
From the definition of $s$ and using \eqref{defe} and \eqref{Q} we have that
\begin{align*}
-\int_\Omega \rho s dx &\geq C^{-1}\int_\Omega \rho(\theta-1-\log \theta)dx -C-C\int_\Omega \rho e(\rho,\theta)dx\\
&\geq C^{-1}\int_\Omega \rho(\theta-1-\log \theta)dx -C.
\end{align*}

Then, integrating equation \eqref{E1s} over $\Omega\times(0,t)$ we get
\begin{align*}
&\int_\Omega \rho(\theta-1-\log \theta)dx + \int_0^t\int_\Omega \left( \frac{\kappa \theta_y^2}{\theta^2} + \frac{\varepsilon u_x^2}{\theta}+\frac{\mu |\mathbf{w}_x|^2}{\theta}+\frac{\nu |\mathbf{h}|_x^2}{\theta} \right)dx\hspace{.5mm}ds \leq C.
\end{align*}

Next, integrating equation \eqref{E1Q} (remember that we introduced the coefficient $\delta$ multiplying the thermal part of the pressure) and using \eqref{unifE1} together with \eqref{deltape+ptet}, \eqref{ptet} and our assumption that $\delta=o(\varepsilon)$ we have
\begin{align*}
&\int_0^t\int_\Omega (\varepsilon u_x^2+|\mathbf{w}_x|^2+\nu|\mathbf{h}_x|^2)dx\hspace{.5mm}ds\\
&\hspace{20mm}\leq C+C\int_0^t M_\theta(s)^2\int_\Omega(1+\rho^\gamma)dx\hspace{.5mm}ds+\frac{\varepsilon}{2}\int_0^t\int_\Omega u_x^2dx\hspace{.5mm}ds.
\end{align*}

Here, $M_\theta(t)=\max_{x\in\Omega}\theta(x,t)$. Now, according to \eqref{Q} and using \eqref{unifE1} we have that $\int_\Omega \rho \theta dx\leq C$. Also, we see that for any $t\in [0,T]$ there is a point $b=b(t)\in\Omega$ such that $\theta(b(t),t)=\left(\int_\Omega \rho dx\right)^{-1}\int_\Omega \rho \theta dx\leq C$. Thus, similarly as in \eqref{Mtetint}, using \eqref{K} we have
\begin{equation}
\int_0^T M_\theta(s)^2ds\leq C+C\int_0^T\int_\Omega \frac{\kappa \theta_x^2}{\theta^2}\leq C.\label{EMtetint}
\end{equation}
Also notice that by \eqref{defe}
\[
\int_\Omega \rho^\gamma dx \leq C\int_\Omega \rho e dx\hspace{.5mm}ds\leq C,
\]
and hence,
\[
\int_0^t\int_\Omega (\varepsilon u_x^2 + \mu|\mathbf{w}_x|^2+\nu|\mathbf{h}_x|^2)dx\hspace{.5mm}ds\leq C.
\]
\end{proof}

We now establish an estimate for the spatial derivative of the density.

\begin{lemma}\label{uniform2}
Let $\alpha=o(\varepsilon^{1/2})$, $\delta=o(\varepsilon)$ and $\beta=o(\varepsilon)$. Assume that $\rho_0$, $u_0$ and $\mathbf{h}_0$ satisfy
\[
\varepsilon^2\int_\Omega \frac{\rho_{0x}^2}{\rho_0^3}dx + \varepsilon\beta^2\int_\Omega\frac{|\mathbf{h}_0|^2}{\rho_0}dx\leq C_0,
\]
and
\[
C_0^{-1}\leq \int_\Omega \rho_0 dx\leq C_0.
\]
where $C_0$ is independent of $\varepsilon$. Then, there exists $C=C(C_0)$ such that
\begin{flalign}
&\varepsilon^2\int_\Omega \frac{\rho_{x}^2}{\rho^3}dx + \varepsilon\beta^2\int_\Omega\frac{|\mathbf{h}|^2}{\rho}dx\nonumber\\
&\hspace{15mm}+\varepsilon \int_0^t\int_\Omega \rho_x^2\rho^{\gamma-3}dx\hspace{.5mm}ds+\varepsilon \beta \int_0^t\int_\Omega \frac{|\mathbf{h}_x|^2}{\rho}dx\hspace{.5mm}ds\leq C.&&\label{unifeq2}
\end{flalign}
\end{lemma}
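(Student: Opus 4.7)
The approach is a Bresch--Desjardins-type (BD) entropy estimate coupled with a $\rho^{-1}$-weighted energy estimate for the magnetic field.

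\textbf{Step 1 (BD identity).} Introduce the effective velocity $U:=u+\varepsilon\rho_x/\rho^2$. Differentiating continuity \eqref{E1rho} in $x$ gives $\varepsilon u_{xx}=-\varepsilon(\log\rho)_{xt}-\varepsilon(u(\log\rho)_x)_x$, so substituting into the momentum equation \eqref{E1u} yields the conservation form
\begin{equation*}
(\rho U)_t+\bigl(\rho uU+p+\tfrac{\beta}{2}|\mathbf{h}|^2-\alpha g'(1/\rho)h(|\psi\circ\mathbf{Y}|^2)\bigr)_x=0.
\end{equation*}
Multiplying this by $U$, integrating, and subtracting the kinetic-energy identity (momentum equation tested with $u$), and using $\rho(U^2-u^2)/2=\varepsilon u\rho_x/\rho+\tfrac{\varepsilon^2}{2}\rho_x^2/\rho^3$ and the identity $\varepsilon\!\int(\rho_x/\rho^2)p_{e,x}\,dx=\varepsilon a\gamma\!\int\rho^{\gamma-3}\rho_x^2\,dx$, we obtain
\begin{align*}
&\frac{d}{dt}\!\int\!\Big(\varepsilon u\tfrac{\rho_x}{\rho}+\tfrac{\varepsilon^2}{2}\tfrac{\rho_x^2}{\rho^3}\Big)dx+\varepsilon a\gamma\!\int\rho^{\gamma-3}\rho_x^2\,dx \\
&\quad=\varepsilon\!\int u_x^2\,dx-\varepsilon\delta\!\int\tfrac{\rho_x(\theta p_\theta)_x}{\rho^2}dx-\varepsilon\beta\!\int\tfrac{\rho_x\mathbf{h}\cdot\mathbf{h}_x}{\rho^2}dx+\varepsilon\alpha\!\int\tfrac{\rho_x(g'(1/\rho)h)_x}{\rho^2}dx.
\end{align*}

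\textbf{Step 2 (magnetic identity).} Test \eqref{E1h} with $\varepsilon\mathbf{h}/\rho$; after using continuity to rewrite the convective part and integrating by parts the diffusive term,
\begin{equation*}
\varepsilon\beta^2\frac{d}{dt}\!\int\!\tfrac{|\mathbf{h}|^2}{\rho}dx+2\nu\varepsilon\beta\!\int\!\tfrac{|\mathbf{h}_x|^2}{\rho}dx=2\varepsilon\beta^2\!\int\!\tfrac{\mathbf{h}\cdot\mathbf{w}_x}{\rho}dx+2\nu\varepsilon\beta\!\int\!\tfrac{\rho_x\mathbf{h}\cdot\mathbf{h}_x}{\rho^2}dx.
\end{equation*}
Adding this to the identity from Step 1 and integrating in time produces on the LHS exactly the four quantities in \eqref{unifeq2}, plus the indefinite contribution $\varepsilon\!\int u\rho_x/\rho\,dx$, which is absorbed by Cauchy--Schwarz against $\tfrac{\varepsilon^2}{4}\!\int\rho_x^2/\rho^3+\!\int\rho u^2$ using Lemma \ref{uniform1}.

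\textbf{Step 3 (closure).} The bounded RHS contributions are handled as follows: $\varepsilon\!\int_0^t\!\!\int u_x^2$ by \eqref{entrunif}; the $\delta$-term is split by Young into a piece absorbed into $\varepsilon\!\int\!\rho^{\gamma-3}\rho_x^2$ (using $\Gamma\leq\gamma/2$ from \eqref{ptet} to match powers of $\rho$) and a piece paid for by $\delta=o(\varepsilon)$ together with $\int\!\kappa\theta_x^2/\theta^2$ from \eqref{entrunif}; the $\alpha$-term uses the compact support of $g'$, the uniform $L^\infty$ bound on $\psi$ coming from \eqref{unifE1} via the 1D embedding $H^1\hookrightarrow L^\infty$, and $\alpha=o(\varepsilon^{1/2})$; the transverse-coupling term $2\varepsilon\beta^2\!\int\mathbf{h}\cdot\mathbf{w}_x/\rho$ is handled by Cauchy--Schwarz together with $\beta=o(\varepsilon)$.

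The \emph{main obstacle} is the magnetic cross-term $\varepsilon\beta\!\int(\rho_x/\rho^2)\mathbf{h}\cdot\mathbf{h}_x\,dx$, which appears with coefficient $2\nu-1$ in the summed identity and cannot be controlled by either estimate in isolation. Young's inequality splits it into $\tfrac{\nu\varepsilon\beta}{2}\!\int|\mathbf{h}_x|^2/\rho$ (absorbed into the magnetic dissipation) plus $C(\varepsilon\beta/\nu)\!\int|\mathbf{h}|^2\rho_x^2/\rho^3$; for the latter, the 1D embedding $\|\mathbf{h}\|_{L^\infty}^2\lesssim\|\mathbf{h}\|_{L^2}^2+\|\mathbf{h}_x\|_{L^2}^2$, combined with $\beta\|\mathbf{h}\|_{L^2}^2\leq C$ from \eqref{unifE1} and the time-integrated $\|\mathbf{h}_x\|_{L^2}^2$ from \eqref{entrunif}, recasts the piece as a multiple of $(\beta/\varepsilon)\|\mathbf{h}\|_{L^\infty}^2\cdot\varepsilon^2\!\int\rho_x^2/\rho^3$. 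The $o(1)$ prefactor $\beta/\varepsilon$ then allows the resulting nonlinear Gronwall-type inequality to close with an exponent that remains bounded uniformly in $\varepsilon$.
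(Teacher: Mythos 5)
Your Steps 1 and 2 are, up to reformulation, the same computation as the paper's: the effective-velocity/BD identity is identical to the Chen--Perepelitsa identity $(\rho v_x^2)_t+(\rho u v_x^2)_x=2v_xu_{xx}$ (with $v=1/\rho$) that the paper starts from, and your magnetic identity is what the paper obtains by multiplying \eqref{E1h} by a multiple of $v\mathbf{h}$ and using \eqref{E1rho}. The treatments of the $\delta$-term, the $\alpha$-term and the $\mathbf{w}_x$-coupling also match the paper's.

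The genuine gap is in your handling of what you call the main obstacle, the cross term $\varepsilon\beta\int (\rho_x/\rho^2)\,\mathbf{h}\cdot\mathbf{h}_x\,dx$. After Young's inequality you are left with $C\tfrac{\varepsilon\beta}{\nu}\int |\mathbf{h}|^2\rho_x^2/\rho^3\,dx \le C\tfrac{\beta}{\nu\varepsilon}\Vert\mathbf{h}\Vert_{L^\infty}^2\cdot\big(\varepsilon^2\int\rho_x^2/\rho^3\,dx\big)$, and for Gronwall to close uniformly you would need $\int_0^T\tfrac{\beta}{\varepsilon}\Vert\mathbf{h}\Vert_{L^\infty}^2\,ds\le C$ independently of $\varepsilon$. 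But the only available bounds are $\beta\Vert\mathbf{h}\Vert_{L^2}^2\le C$ from \eqref{unifE1} and $\int_0^T\Vert\mathbf{h}_x\Vert_{L^2}^2\,ds\le C$ from \eqref{entrunif}, so $\Vert\mathbf{h}\Vert_{L^\infty}^2\lesssim \beta^{-1}+\beta^{-1/2}\Vert\mathbf{h}_x\Vert_{L^2}$ and the Gronwall coefficient is of size $O(\varepsilon^{-1})+O(\sqrt{\beta}/\varepsilon)$, which diverges as $\varepsilon\to 0$ even with $\beta=o(\varepsilon)$; the resulting bound is $e^{C/\varepsilon}$, not uniform. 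The prefactor $\beta/\varepsilon=o(1)$ does not save you because $\Vert\mathbf{h}\Vert_{L^\infty}^2$ is not uniformly bounded. The paper avoids the cross term altogether by choosing the multiplier for \eqref{E1h} so that it cancels \emph{identically}: multiplying \eqref{E1h} by $2v\mathbf{h}$ and using \eqref{E1rho} gives
\begin{equation*}
v_x(|\mathbf{h}|^2)_x=-\frac{\beta}{\nu}(v|\mathbf{h}|^2)_t-2v|\mathbf{h}_x|^2+\frac{\beta}{\nu}(vu|\mathbf{h}|^2)_x+(2v\mathbf{h}\cdot\mathbf{h}_x)_x-\frac{2\beta}{\nu}v\mathbf{h}\cdot\mathbf{w}_x,
\end{equation*}
and substituting this exact identity into the BD identity (i.e.\ scaling the magnetic estimate by $1/\nu$ rather than leaving a mismatch of coefficient $2\nu-1$) puts $\varepsilon\beta^2\nu^{-1}\int|\mathbf{h}|^2/\rho$ and $2\varepsilon\beta\int\!\!\int|\mathbf{h}_x|^2/\rho$ directly on the left-hand side of \eqref{unifeq2} with good signs; the only surviving magnetic term is $\tfrac{2\varepsilon\beta^2}{\nu}\int\!\!\int v\,\mathbf{h}\cdot\mathbf{w}_x$, which is controlled by Cauchy--Schwarz, the bound $\max_\Omega\rho^{-1/2}\le C+C(\int\rho_x^2/\rho^3)^{1/2}$, and a \emph{linear} Gronwall inequality whose coefficient $1+\int_\Omega(\kappa\theta_x^2/\theta^2+\mu|\mathbf{w}_x|^2)\,dx$ is uniformly integrable in time by Lemma \ref{uniform1}. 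You should replace your Young-inequality splitting of the cross term by this exact cancellation; the rest of your argument then goes through.
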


\begin{proof}
As in \cite{CP} we deduce the following equation for $v(x,t)=1/\rho(x,t)$:

\begin{equation}
(\rho v_x^2)_t+(\rho u v_x^2)_x=2v_x u_{xx}.\label{rhoxrho3}
\end{equation}
Using equation \eqref{E1u} we have
\begin{align}
2 v_x u_{xx}&=\frac{2}{\varepsilon}v_x(p_x + (\rho u)_t +(\rho u^2)_x) + \frac{\beta}{\varepsilon}v_x(|\mathbf{h}|^2)_x-2\frac{\alpha}{\varepsilon}v_x(g'(v)h(|\psi|^2))_x\nonumber\\
  &=\frac{2}{\varepsilon}v_x p_x + \frac{2}{\varepsilon}((\rho u v_x)_t +[\rho u(uv_x)_x-\rho u(v u_x)_x+v_x(\rho u^2)_x])\nonumber\\
  &\hspace{5mm}+\frac{2}{\varepsilon}v_x(|\mathbf{h}|^2)_x-2\frac{\alpha}{\varepsilon}v_x(g'(v)h(|\psi\circ\mathbf{Y}|^2))_x.
\end{align}

Denoting by $J$ the expression in square brackets, by integration by parts, we have
\begin{align*}
\int_\Omega J dx&=\int_\Omega (v u_x (\rho u)_x-uv_x(\rho u)_x + v_x(u(\rho u)_x+\rho u u_x))dx\\
&=\int_\Omega (v u_x(\rho u)_x + \rho u v_x u_x)dx = \int_\Omega u_x^2 dx.
\end{align*}

Next, bearing in mind our assumption \eqref{deltape+ptet} we see that
\[
v_x p_x = -a\gamma \rho^{\gamma-3}\rho_x^2 - \delta\frac{\rho_x}{\rho^2}\theta_xp_\theta(\rho)-\delta\frac{\rho_x^2}{\rho^2}\theta p_\theta'(\rho).
\]

In order to deal with the term $v_x(|\mathbf{h}|^2)_x$ we first rewrite \eqref{E1rho} as
\[
v_t+v_x u =v u_x.
\]

Multiply this equation by $\beta |\mathbf{h}|^2$ to obtain
\[
\beta v_t |\mathbf{h}|^2 + \beta v_x u |\mathbf{h}|^2-\beta v u_x|\mathbf{h}|^2=0.
\]

Now, multiply \eqref{E1h} by $2v\mathbf{h}$ and add the resulting equation to the above to obtain
\[
\beta (v|\mathbf{h}|^2)_t + 2\nu v |\mathbf{h}_x|^2+\beta(v u |\mathbf{h}|^2)_x - (2\nu v \mathbf{h}\cdot\mathbf{h}_x)_x + 2\beta v\mathbf{h}\cdot \mathbf{w}_x=-\nu v_x(|\mathbf{h}|^2)_x.
\]

In this way,
\[
v_x (|\mathbf{h}|^2)_x=-\frac{\beta}{\nu} (v|\mathbf{h}|^2)_t - 2 v |\mathbf{h}_x|^2+\frac{\beta}{\nu}(v u |\mathbf{h}|^2)_x + (2 v \mathbf{h}\cdot\mathbf{h}_x)_x - \frac{2\beta}{\nu} v\mathbf{h}\cdot \mathbf{w}_x.
\]

Gathering this information in \eqref{rhoxrho3}, multiplying by $\varepsilon^2$ and integrating over $\Omega\times(0,t)$ we get
\begin{flalign}
&\varepsilon^2\int_\Omega \frac{\rho_x^2}{\rho^3}dx &\nonumber\\
&=\varepsilon^2\int_\Omega\frac{\rho_{0,x}^2}{\rho_0^3}dx - 2a\gamma\varepsilon\int_0^t\int_\Omega \rho^{\gamma-3}\rho_x^2dx\hspace{.5mm}ds &\nonumber\\
&\hspace{5mm}- 2\varepsilon \delta\int_0^t\int_\Omega\left( \frac{\rho_x}{\rho^2}\theta_x p_\theta(\rho)+\frac{\rho_x^2}{\rho^2}\theta p_\theta'(\rho) \right)dx\hspace{.5mm}ds -2\varepsilon\int_\Omega\frac{\rho_x}{\rho}u dx &\nonumber\\
&\hspace{5mm}+ 2\varepsilon \int_\Omega\frac{\rho_{0x}}{\rho_0}u_0dx + 2\varepsilon\int_0^t\int_\Omega u_x^2 dx-\frac{\varepsilon \beta^2}{\nu}\int_\Omega\frac{1}{\rho}|\mathbf{h}|^2dx&\nonumber\\
&\hspace{5mm}+\frac{\varepsilon \beta^2}{\nu}\int_\Omega\frac{1}{\rho_0}|\mathbf{h}_0|^2dx - 2\varepsilon\beta\int_0^t\int_\Omega \frac{1}{\rho}|\mathbf{h}_x|^2dx\hspace{.5mm}ds + \frac{2\varepsilon\beta^2}{\nu}\int_0^t\int_\Omega \frac{1}{\rho}\mathbf{b}\cdot\mathbf{w}_x dx\hspace{.5mm}&\nonumber\\
&\hspace{5mm}+2\alpha\varepsilon\int_0^t\int_\Omega \frac{\rho_x}{\rho^2}(g'(1/\rho)h(|\psi\circ \mathbf{Y}|^2))_xdx\hspace{.5mm}ds.&\label{rhorho}
\end{flalign}

Concerning the third integral on the right hand side, by virtue of \eqref{ptet}, we have that
\begin{align*}
&-2\varepsilon \delta\int_0^t\int_\Omega\left( \frac{\rho_x}{\rho^2}\theta_x p_\theta(\rho)+\frac{\rho_x^2}{\rho^2}\theta p_\theta'(\rho) \right)dx\hspace{.5mm}ds \\
&\hspace{30mm}\leq 2C\varepsilon \delta\int_0^t\int_\Omega \frac{|\rho_x|}{\rho^2}|\theta_x|(1+\rho^{\gamma/2})dx\hspace{.5mm}ds.
\end{align*}

Observe that since $\int_\Omega \rho dx = \int_\Omega \rho_0 dx$, then for each $t\in(0,T)$ there is a point $b(t)\in\Omega$ such that $\rho(b(t),t)=\int_\Omega \rho_0 dx\geq C_0^{-1}$. Therefore,
\[
\max_{z\in\Omega}\frac{1}{\sqrt{\rho(z,t)}}\leq \sqrt{C_0} + \int_\Omega\left| \left(\frac{1}{\sqrt{\rho}} \right)_x\right|dx\leq C+C\left( \int_\Omega \frac{\rho_x^2}{\rho^3}dx \right)^{1/2}.
\]

Thus, taking \eqref{K} into consideration we see that
\begin{align*}
&2\varepsilon \delta\int_0^t\int_\Omega \frac{|\rho_x|}{\rho^2}|\theta_x|(1+\rho^{\gamma/2})dx\hspace{.5mm}ds\\
&\leq C\varepsilon\int_0^t\max_{x\in\Omega}\frac{1}{\rho^{1/2}}\int_\Omega\left( \delta\frac{|\rho_x|}{\rho^{3/2}}|\theta_x| + \delta|\theta_x|\hspace{.5mm}|\rho_x|\rho^{(\gamma-3)/2} \right)dx\hspace{.5mm}ds\\
&\leq C\varepsilon \int_0^t\left(C+C\left( \int_\Omega \frac{\rho_x^2}{\rho^3}dx \right)^{1/2}  \right)\left( \int_\Omega \frac{\kappa \theta_x^2}{\theta^2}dx \right)^{1/2}\times\\
&\hspace{40mm}\times\left[ \left(\int_\Omega \varepsilon^2 \frac{\rho_x^2}{\rho^3}dx \right)^{1/2}+\left(\int_\Omega \varepsilon^2\rho_x^2 \rho^{\gamma-3} \right)^{1/2} \right]ds\\
&\leq \frac{a\gamma\varepsilon}{4}\int_0^t\int_\Omega\rho_x^2\rho^{\gamma-3}dx\hspace{.5mm}ds + C\int_0^t\left( 1+\int_\Omega\frac{\kappa\theta_x^2}{\theta^2}dx \right)\left(1+\int_\Omega\varepsilon^2\frac{\rho_x^2}{\rho^3} dx\right)ds.
\end{align*}

We already know from Lemma \ref{uniform1} that 
\[
\varepsilon\int_0^t\int_\Omega u_x^2 dx\hspace{.5mm}ds\leq C.
\]

Concerning the fourth integral on the right hand side
\[
2\varepsilon\int_\Omega\frac{\rho_x}{\rho}u dx\leq \frac{\varepsilon^2}{4}\int_\Omega \frac{\rho_x^2}{\rho^3}dx + C\int_\Omega \rho u^2dx\leq \frac{\varepsilon^2}{4}\int_\Omega \frac{\rho_x^2}{\rho^3}dx+C.
\]

We continue with (recall that $\beta=o(\varepsilon)$)
\begin{align*}
&\frac{2\varepsilon\beta^2}{\nu}\int_0^t\int_\Omega\frac{1}{\rho}\mathbf{h}\cdot\mathbf{w}_x dx\hspace{.5mm}ds\\
&\leq \frac{2\varepsilon\beta^2}{\nu}\int_0^t\max_{x\in\Omega}\frac{1}{\rho^{1/2}}\left( \int_\Omega\frac{1}{\rho}|\mathbf{h}|^2dx \right)^{1/2}\left( \int_\Omega|\mathbf{w}_x|^2 dx  \right)^{1/2}ds\\
&\leq C\frac{\varepsilon\beta^2}{\nu}\int_0^t\left( 1+\left(\int_\Omega\frac{\rho_x^2}{\rho^3}dx \right)^{1/2} \right)\left( \int_\Omega \frac{1}{\rho}|\mathbf{h}|^2dx\right)^{1/2}\left( \int_\Omega |\mathbf{w}_x|^2 dx  \right)^{1/2}ds\\
&\leq C\int_0^t\left( \varepsilon+\left(\int_\Omega\varepsilon^2\frac{\rho_x^2}{\rho^3}dx \right)^{1/2} \right)\left( \int_\Omega \frac{\varepsilon\beta^2}{\nu}\frac{1}{\rho}|\mathbf{h}|^2dx\right)^{1/2}\left( \int_\Omega \frac{\beta^2}{\varepsilon\nu} |\mathbf{w}_x|^2 dx  \right)^{1/2}ds\\
&\leq C\int_0^t \left(1+\int_\Omega \mu |\mathbf{w}_x|^2 dx\right)\left(1+\int_\Omega\varepsilon^2\frac{\rho_x^2}{\rho^3}dx+ \int_\Omega \varepsilon\beta^2\frac{1}{\rho}|\mathbf{h}|^2dx \right)ds.
\end{align*}

Finally, recalling \eqref{Lag3} we see that $(\psi\circ\mathbf{Y})_x=\rho\psi_y$. We also know that the Jacobian of the Lagrangian coordinate change is equal to $\rho$. Therefore, using \eqref{gh} and Lemma \ref{uniform1} we see that
\begin{align*}
&2\alpha\varepsilon\int_0^t\int_\Omega\frac{\rho_x}{\rho^2}(g'(1/\rho)h(|\psi\circ\mathbf{Y}|^2)))_x dx\hspace{.5mm ds}\\
&\hspace{30mm}\leq \frac{a\gamma\varepsilon}{8}\int_0^t\int_\Omega \rho_x^2\rho^{\gamma-3}dx\hspace{.5mm}ds + C\int_0^t\int_\Omega|\psi_x|^2dx\hspace{.5mm}ds\\
&\hspace{30mm}\leq \frac{a\gamma\varepsilon}{8}\int_0^t\int_\Omega \rho_x^2\rho^{\gamma-3}dx\hspace{.5mm}ds +C.
\end{align*}

Putting all of these estimates together with \eqref{rhorho} we deduce the inequality
\begin{align*}
&\varepsilon^2\int_\Omega \frac{\rho_{x}^2}{\rho^3}dx + \varepsilon\beta^2\int_\Omega\frac{|\mathbf{h}|^2}{\rho}dx+\varepsilon \int_0^t\int_\Omega \rho_x^2\rho^{\gamma-3}dx\hspace{.5mm}ds+\varepsilon \beta \int_0^t\int_\Omega \frac{|\mathbf{h}_x|^2}{\rho}dx\hspace{.5mm}ds\\
&\leq C+C\int_0^t\left[1+\int_\Omega\left( \frac{\kappa\theta_x^2}{\theta^2}+\mu|\mathbf{w}_x|^2 \right)dx  \right]\times\\
&\hspace{40mm}\times\left( 1+\varepsilon^2\int_\Omega\frac{\rho_x^2}{\rho^3}dx+\varepsilon\beta^2\int_\Omega\frac{1}{\rho}|\mathbf{h}|^2dx \right) ds,
\end{align*}
with $C>0$ independent of $\varepsilon$. And since,
\[
\int_0^t\left[1+\int_\Omega\left( \frac{\kappa\theta_x^2}{\theta^2}+\mu|\mathbf{w}_x|^2 \right)dx  \right]ds\leq C,
\]
Gronwall's inequality yields \eqref{unifeq2}.
\end{proof}

We now deduce some higher integrability estimates for the density.

\begin{lemma}\label{uniform3}
Let
\[
\int_\Omega \rho_0 e(\rho_0,\theta_0)dx+\int_\Omega\rho_0 u_0^2dx\leq C_0
\]
where $C_0$ is independent of $\varepsilon$. Then, there is a constant $C=C(C_0)$, independent of $\varepsilon$ such that
\begin{equation}
\int_0^t\int_\Omega (\rho^{\gamma+1}+\delta\rho\theta p_\theta(\rho)+\beta\rho|\mathbf{h}|^2)dx\hspace{.5mm}ds\leq C.
\end{equation}
\end{lemma}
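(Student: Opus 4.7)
The plan is to apply the classical ``pressure estimate'' test-function trick, adapted to the present system. Specifically, set
\[
\phi(x,t) := \int_0^x \rho(y,t)\, dy - x\int_0^1 \rho(y,t)\, dy,
\]
so that $\phi|_{\partial\Omega}=0$, $\phi_x = \rho-\bar{\rho}$ with $\bar{\rho}:=\int_\Omega \rho\, dx = \int_\Omega\rho_0\, dx$ (conserved by \eqref{E1rho}), and, using \eqref{E1rho} together with $u|_{\partial\Omega}=0$, $\phi_t = -\rho u$. I would multiply equation \eqref{E1u} by $\phi$, integrate over $\Omega\times(0,t)$, and integrate by parts in both $t$ (for the term $(\rho u)_t\phi$) and $x$ (for all flux terms), using $\phi|_{\partial\Omega}=0$ to kill the $x$-boundary contributions.

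The time-derivative term contributes $[\int_\Omega \rho u\phi\,dx]_0^t + \int_0^t\int_\Omega \rho^2 u^2\,dx\,ds$, while the convective flux contributes $-\int_0^t\int_\Omega \rho u^2(\rho-\bar{\rho})\,dx\,ds$, whose leading piece exactly cancels the quadratic $\int\!\int \rho^2 u^2$. This cancellation is the algebraic core of the estimate. After it, the pressure and magnetic-pressure fluxes, together with $\phi_x=\rho-\bar\rho$, place precisely
\[
\int_0^t\!\!\int_\Omega \bigl(a\rho^{\gamma+1}+\delta\theta p_\theta(\rho)\rho + \tfrac{\beta}{2}\rho|\mathbf{h}|^2\bigr)\,dx\,ds
\]
on the left-hand side. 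The remainder consists of: (i) the boundary trace $[\int \rho u\phi\,dx]_0^t$, controlled by Cauchy--Schwarz using $|\phi|\leq \bar\rho\leq C$ together with the kinetic-energy bound from Lemma \ref{uniform1}; (ii) terms of the form $\bar\rho\int\!\int(a\rho^\gamma+\delta\theta p_\theta(\rho)+\tfrac{\beta}{2}|\mathbf{h}|^2+\rho u^2)\,dx\,ds$, controlled by Lemma \ref{uniform1}, the uniform bound $\int_0^T M_\theta^2\,ds\leq C$ from \eqref{EMtetint}, and the growth $p_\theta(\rho)\leq p_0(1+\rho^\Gamma)$ with $\Gamma\leq\gamma/2$; (iii) the coupling term $\alpha\int\!\int g'(1/\rho)h(|\psi\circ\mathbf{Y}|^2)(\rho-\bar\rho)\,dx\,ds$, handled using that $g'$ has compact support in $(0,\infty)$, that $\psi$ is uniformly bounded by the one-dimensional Sobolev embedding $H^1\hookrightarrow L^\infty$, and that $\alpha=o(\varepsilon^{1/2})$; and (iv) the viscous cross-term $-\int\!\int\varepsilon u_x(\rho-\bar\rho)\,dx\,ds$.

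I expect (iv) to be the only nontrivial obstacle. Cauchy--Schwarz gives
\[
\left|\int_0^t\!\!\int_\Omega \varepsilon u_x(\rho-\bar{\rho})\,dx\,ds\right| \leq \left(\int_0^t\!\!\int_\Omega \varepsilon u_x^2\,dx\,ds\right)^{1/2}\left(\varepsilon\int_0^t\!\!\int_\Omega (\rho-\bar{\rho})^2\,dx\,ds\right)^{1/2},
\]
and the first factor is controlled by Lemma \ref{uniform1}. For the second, the $L^1$--$L^{\gamma+1}$ interpolation in $x$ gives $\|\rho\|_{L^2}^2\leq C\|\rho\|_{L^{\gamma+1}}^{(\gamma+1)/\gamma}$ (using $\int\rho\,dx\leq C$), and Hölder in time yields
\[
\int_0^t\!\!\int_\Omega \rho^2\,dx\,ds \leq C+CT^{(\gamma-1)/\gamma}\left(\int_0^t\!\!\int_\Omega \rho^{\gamma+1}\,dx\,ds\right)^{1/\gamma}.
\]
Hence the cross-term is bounded by $C\varepsilon^{1/2}\bigl(1+(\int\!\int\rho^{\gamma+1})^{1/(2\gamma)}\bigr)$, and Young's inequality with exponents $2\gamma$ and $2\gamma/(2\gamma-1)$ absorbs the $\rho^{\gamma+1}$ factor into the left-hand side at the cost of a constant independent of $\varepsilon$ (since $1/(2\gamma)<1$ for $\gamma>1$). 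Collecting everything yields the stated bound.
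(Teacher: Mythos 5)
Your proposal is correct and is essentially the paper's argument: both hinge on testing the momentum equation against an antiderivative of $\rho$, exploiting the exact cancellation of the $\rho^2u^2$ terms, and then controlling the remainder (kinetic, thermal-pressure, magnetic, coupling and viscous cross terms) with Lemma \ref{uniform1}, the bound \eqref{EMtetint}, and the compact support of $g'$. The only (cosmetic) difference is the multiplier: you use the single global function $\phi=\int_0^x\rho\,dy-x\bar\rho$ vanishing at both endpoints, whereas the paper integrates the momentum equation from a boundary point $b$, multiplies by $\rho\sigma$ with $\sigma(b)=0$, and then combines the two choices $\sigma_1(x)=x$, $\sigma_2(x)=1-x$ to cover $\Omega$; your absorption of the viscous cross term via interpolation and Young's inequality is a valid (if slightly more elaborate) alternative to the paper's direct Young-inequality absorption of $\delta_1\int\!\!\int\rho^2\sigma^2$ into $\int\!\!\int\rho p\sigma^2$.
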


Let us point out that according to the growth conditions \eqref{defe}, Lemma \ref{uniform1} yields only uniform boundedness of $\rho$ in the space $L^\infty(0,T;L^\gamma(\Omega))$. Let us carry on the proof.

\begin{proof}
Let $b\in\{ 0,1\}$ (recall that we are assuming that $\Omega=(0,1)$ without loss of generality) and let $\sigma(x)$ be a smooth function such that
\begin{equation}
\sigma(b)=0\hspace{3mm}\text{ and }\hspace{3mm}0\leq \sigma\leq 1.\label{sigm}
\end{equation}

Multiplying equation \eqref{E1u} by $\sigma$ and integrating from $b$ to $x$ (with respect to the space variable) we have
\begin{align*}
p\sigma + \frac{\beta}{2}|\mathbf{h}|^2=&-\rho u^2\sigma + \varepsilon u_x\sigma+\alpha g'(1/\rho)h(|\psi|^2)\sigma - \left( \int_b^x \rho u \sigma d\xi \right)_t \\
&+ \int_b^x\left[\left(\rho u^2 + p + \frac{\beta}{2}|\mathbf{h}|^2-\alpha g'(1/\rho)h(|\psi|^2)\right)\sigma_x - \varepsilon u_x\sigma_x \right]d\xi.
\end{align*}

Multiply this identity by $\rho\sigma$ and use \eqref{E1rho} to obtain
\begin{flalign*}
&\rho p\sigma^2+\frac{\beta}{2}\rho|\mathbf{h}|^2\sigma^2&\\
&\hspace{10mm}=\varepsilon\rho u_x\sigma^2 + \alpha g'(1/\rho)h(|\psi|^2)\rho\sigma^2-\left( \rho\sigma\int_b^x\rho u \sigma d\xi \right)_t &\\
&\hspace{15mm}- \left(\rho u \sigma\int_b^x \rho u \sigma d\xi\right)_x+\rho u\sigma_x\int_0^x\rho u\sigma d\xi  &\\
&\hspace{15mm}+\rho \sigma\int_b^x\left[\left(\rho u^2 + p + \frac{\beta}{2}|\mathbf{h}|^2-\alpha g'(1/\rho)h(|\psi|^2)\right)\sigma_x - \varepsilon u_x\sigma_x \right]d\xi.&
\end{flalign*}

Integrating over $\Omega\times(0,t)$ we have
\begin{align}
&\int_0^t\int_\Omega \rho p \sigma^2dx\hspace{.5mm}+\frac{\beta}{2}\int_0^t\int_\Omega \rho|\mathbf{h}|^2dx\hspace{.5mm}ds\nonumber\\
&=\alpha \int_0^t\int_\Omega g'(1/\rho)h(|\psi|^2)\rho\sigma^2dx\hspace{.5mm}ds + \varepsilon\int_0^t\int_\Omega \rho u_x \sigma^2dx\hspace{.5mm}ds\nonumber\\
&\hspace{7mm}+\int_\Omega\rho\sigma\left( \int_b^x\rho u\sigma dxi\right)dx + \int_\Omega\rho_0\sigma\left( \int_b^x\rho_0 u_0\sigma dxi\right)dx + r_1(t),\label{uniform3.1}
\end{align}
where
\begin{align*}
&r_1(t)=\int_0^t\int_\Omega \rho u\sigma_x\left(\int_b^x \rho u\sigma d\xi\right)dx\hspace{.5mm}ds\\
  &+\int_0^t\int_\Omega \rho \sigma\int_b^x\left[\left(\rho u^2 + p + \frac{\beta}{2}|\mathbf{h}|^2-\alpha g'(1/\rho)h(|\psi|^2)\right)\sigma_x - \varepsilon u_x\sigma_x \right]d\xi dx\hspace{.5mm}ds.
\end{align*}

Note that,
\begin{align*}
\varepsilon\int_0^t\int_\Omega \rho u_x \sigma^2dx\hspace{.5mm}ds&\leq \frac{\varepsilon}{\delta_1}\int_0^t\int_\Omega u_x^2dx\hspace{.5mm}ds + \delta_1\int_0^t\int_\Omega\rho^2\sigma^2dx\hspace{.5mm}ds\\
&\leq C_{\delta_1} + C\delta_1\int_0^t\int_\Omega\rho p\sigma^2dx\hspace{.5mm}ds.
\end{align*}

On the other hand, by virtue of the estimates in Lemma \ref{uniform1}, all the other terms on the right hand side of \eqref{uniform3.1} are bounded. Thus, choosing $\delta_1>0$ small enough, we get
\[
\int_0^t\int_\Omega \left(\rho p +\frac{\beta}{2}\rho|\mathbf{h}|^2\right)\sigma^2 dx\hspace{.5mm}ds\leq C,
\]
and using the hypotheses \eqref{pe+ptet} on $p$ 
\[
\int_0^t\int_\Omega \left(\rho^{\gamma+1}+\delta\theta\rho p_\theta(\rho) +\frac{\beta}{2}\rho|\mathbf{h}|^2\right)\sigma^2 dx\hspace{.5mm}ds\leq C,
\]
and this holds for any $\sigma$ that satisfies \eqref{sigm} (of course, the constant $C$ in this last inequality depends on $\sigma$). Choosing $\sigma_1(x)=x$ and $\sigma_2(x)=1-x$ (again, $\Omega=(0,1)$) we get
\[
\int_0^t\int_\Omega \left(\rho^{\gamma+1}+\delta\theta\rho p_\theta(\rho) +\frac{\beta}{2}\rho|\mathbf{h}|^2\right)(x^2+(1-x)^2) dx\hspace{.5mm}ds\leq C.
\]

Since $\min_{x\in(0,1)}(x^2+(1-x)^2)=1/2$ we conclude that
\[
\int_0^t\int_\Omega \left(\rho^{\gamma+1}+\delta\theta\rho p_\theta(\rho) +\frac{\beta}{2}\rho|\mathbf{h}|^2\right)dx\hspace{.5mm}ds\leq C.
\]
\end{proof}

We now deduce a higher order integrability estimate for the velocity.

\begin{lemma}\label{uniform4}
Let $\alpha=o(\varepsilon^{1/2})$ and $\beta=o(\varepsilon)$. Assume that $(\rho_0,u_0,\mathbf{h}_0,\theta_0)$ satisfy
\[
\int_\Omega \rho_0 dx\geq C^{-1},\hspace{10mm}\varepsilon^2\int_\Omega \frac{\rho_{0,x}^2}{\rho_0^3}dx + \varepsilon\beta^2\int_\Omega \frac{1}{\rho_0}|\mathbf{h}_0|^2dx\leq C_0,
\]
and
\[
\int_\Omega \rho_0(e(\rho_0,\theta_0)+u_0^2)dx\leq C_0
\]
where, $C_0$ is a constant independent of $\varepsilon$. Then
\begin{equation}
\int_0^t\int_\Omega(\rho |u|^3+\rho^{\gamma+\vartheta})dx\hspace{.5mm}ds\leq C,\label{highint2}
\end{equation}
where, $\vartheta=\frac{\gamma-1}{2}$ and $C>$ is a constant independent of $\varepsilon$.
\end{lemma}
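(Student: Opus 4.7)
Following the scheme of Chen and Perepelitsa (their Lemma~3.4 in \cite{CP}), the strategy is to introduce an auxiliary primitive function of the momentum and test the momentum equation \eqref{E1u} against carefully chosen quantities in order to upgrade the uniform energy estimate of Lemma~\ref{uniform1} to the desired higher integrability of both $\rho|u|^3$ and $\rho^{\gamma+\vartheta}$.

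First I would introduce the function $w(t,x):=\int_0^x (\rho u)(t,y)\,dy$. The no-slip boundary condition $u|_{\partial\Omega}=0$ together with the continuity equation \eqref{E1rho} yields $w_x=\rho u$ and, after integrating \eqref{E1u} from $0$ to $x$,
\[
w_t(t,x)=-\Big(\rho u^2+p+\tfrac{\beta}{2}|\mathbf{h}|^2-\alpha g'(1/\rho)h(|\psi\circ\mathbf{Y}|^2)\Big)+\varepsilon u_x+\mathcal{B}(t),
\]
where $\mathcal{B}(t)$ collects the values at $x=0$. The quantity $w$ is uniformly bounded in $L^\infty_t L^\infty_x$ by $\|\sqrt{\rho}u\|_2\|\sqrt{\rho}\|_2\le C$, thanks to Lemma~\ref{uniform1} and mass conservation.

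Next, I would compute $\frac{d}{dt}\int_\Omega \rho u w\,dx$ by expanding $(\rho u)_t w + \rho u\, w_t$ via \eqref{E1u} and the identity above. Integration by parts against $w_x=\rho u$ converts the pressure and the convective flux into the expression
\[
\int_0^t\int_\Omega \rho |u|^3\,dx\,ds + \int_0^t\int_\Omega p\,\rho u\,dx\,ds
\]
plus controlled boundary-in-time terms and a viscous contribution $\varepsilon\int_0^t\int_\Omega u_x\rho u\,dx\,ds$. The last term is bounded using the $L^2_{t,x}$ estimate $\varepsilon\int u_x^2\le C$ (Lemma~\ref{uniform1}), combined with Lemma~\ref{uniform3} to handle $\rho u$ in $L^2$. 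The pressure term $\int\int \rho^{\gamma+1}$ is already bounded by Lemma~\ref{uniform3}, which closes the estimate for $\int\int\rho|u|^3$.

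For the bound on $\int\int\rho^{\gamma+\vartheta}$, I would test \eqref{E1u} against an auxiliary function $\phi(t,x)$ whose spatial derivative equals $\rho^\vartheta$ minus its mean, i.e.\ $\phi_x=\rho^\vartheta-\langle\rho^\vartheta\rangle$. This is the one-dimensional analogue of the Bogovskii-type test function used in Feireisl's work \cite{Fe}. Integration by parts then produces the good term $\int_0^t\int_\Omega p\,\rho^\vartheta\,dx\,ds\gtrsim\int_0^t\int_\Omega \rho^{\gamma+\vartheta}\,dx\,ds$ on one side, while the right-hand side consists of terms of the form $\int\rho u\,\phi_t$, $\int\rho u^2\phi_x$, and the viscous, magnetic and coupling contributions, all of which are controlled by Lemmas~\ref{uniform1}--\ref{uniform3} and the already established $\int\int\rho|u|^3$ bound (via H\"older with the appropriate interpolation exponents chosen so that $\vartheta=\tfrac{\gamma-1}{2}$ is admissible).

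The main obstacle, beyond the already intricate bookkeeping inherited from \cite{CP}, will be absorbing the new contributions specific to our setting: the magnetic term $\tfrac{\beta}{2}(|\mathbf{h}|^2)_x$, the thermal pressure $\delta\theta p_\theta(\rho)$, and the SW-LW coupling $\alpha(g'(1/\rho)h(|\psi\circ\mathbf{Y}|^2))_x$. These are handled, respectively, by using $\beta\int\rho|\mathbf{h}|^2\,dx\le C$ (Lemma~\ref{uniform3}) together with the scaling $\beta=o(\varepsilon)$; the assumption $\delta=o(\varepsilon)$ with Lemma~\ref{uniform2} and \eqref{EMtetint}; and the $L^\infty$ bound on $\psi$, the support conditions \eqref{gh} on $g,h$, and the scaling $\alpha=o(\varepsilon^{1/2})$, which is precisely the threshold required so that the coupling does not spoil the bound after Young's inequality against the $\varepsilon^{1/2}\rho u$ terms. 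Once these error terms are shown to be $O(1)$, Gronwall's inequality closes the argument and yields \eqref{highint2}.
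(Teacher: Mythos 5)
There is a genuine gap, and it lies at the heart of the lemma: your proposal contains no mechanism that actually produces the cubic term $\int_0^t\!\!\int_\Omega \rho|u|^3$. The functional $\int_\Omega \rho u\, w\,dx$ with $w_x=\rho u$ is identically $\tfrac12 w(1,t)^2$ (since $\rho u\,w=\tfrac12(w^2)_x$), so when you expand $(\rho u)_t w+\rho u\,w_t$ the two interior contributions cancel exactly and only boundary terms survive; nothing like $\int\!\!\int\rho|u|^3$ appears. Even setting that aside, pairing the convective flux $\rho u^2$ with $w_x=\rho u$ gives $\int\rho^2u^3$, which is neither $\rho|u|^3$ nor signed. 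Multipliers built from primitives of the momentum or density (your first step) and Bogovskii-type multipliers with $\phi_x=\rho^\vartheta-\langle\rho^\vartheta\rangle$ (your second step) only upgrade the integrability of the \emph{density} --- this is exactly what Lemma \ref{uniform3} already does with the cutoff $\sigma$ and the primitive $\int_b^x\rho u\sigma\,d\xi$ --- and your second step moreover generates the convective term $\int\rho^{1+\vartheta}u^2$, which for $\gamma>3$ is not controlled by $\int\rho|u|^3$ and $\int\rho^{\gamma+1}$ alone.

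The paper's proof obtains both $\rho|u|^3$ and $\rho^{\gamma+\vartheta}$ \emph{simultaneously} from a single object: the weak entropy pair $(\eta^\#,q^\#)$ generated by $\zeta_\#(z)=\tfrac12 z|z|$, whose flux satisfies the two-sided bound $C^{-1}(\rho|u|^3+\rho^{\gamma+\vartheta})\le q^\#\le C(\rho|u|^3+\rho^{\gamma+\vartheta})$ (see \eqref{2.14}). One writes the entropy dissipation identity \eqref{entrfluxeq}, sets $f:=q^\#+(\tfrac{\beta}{2}|\mathbf{h}|^2-\alpha g'h-\varepsilon u_x)\eta_m^\#$, and --- this is the second key idea missing from your plan --- finds an anchor point $a(t)$ with $\int_0^t|f(a(s),s)|\,ds\le C$ by averaging $\rho|f|^{1/k}$ against the conserved mass for $k$ large. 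Integrating the identity over $(a(t),x)\times(0,t)$ and then in $x$, the positive term $\int\!\!\int q^\#$ is isolated on the left, while the remaining terms are controlled by Lemmas \ref{uniform1}--\ref{uniform3}, the interpolation bounds \eqref{hL1infty}--\eqref{uL1infty}, and the derivative bounds \eqref{2.15}--\eqref{2.16}; this is also where the scalings $\alpha=o(\varepsilon^{1/2})$, $\beta=o(\varepsilon)$ enter, much as you anticipated. Your treatment of the new magnetic, thermal and coupling terms is in the right spirit, but without the cubic entropy $\zeta_\#$ and the anchor-point argument the estimate \eqref{highint2} cannot be reached.
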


\begin{proof}
Let $\zeta_\#(z)=\frac{1}{2}z|z|$. Then, the corresponding weak entropy pair $(\eta^\#,q^\#)$ $:=(\eta^{\zeta_\#},q^{\zeta_\#})$ satisfies
\begin{align}
&|\eta^\#(\rho,m)|\leq C(\rho|u|^2+\rho^\gamma), &C^{-1}(\rho|u|^3+\rho^{\gamma+\vartheta})\leq q^\#(\rho,m)\leq C(\rho|u|^3+\rho^{\gamma+\vartheta}),\label{2.14}\\
&|\eta_m^\#(\rho,m)|\leq C(|u|+\rho^\vartheta), &|\eta_{mm}^\#(\rho,m)\leq C\rho^{-1}.\label{2.15}
\end{align}
and, regarding $\eta_m^\#$ in the coordinates $(\rho,u)$
\begin{align}
&|\eta_{mu}^\#(\rho,\rho u)|\leq C, &|\eta_{m\rho}^\#(\rho,u)|\leq C\rho^{\vartheta-1},\label{2.16}
\end{align}
for all $\rho\geq 0$ and all $u\in\mathbb{R}$. This is a consequence of the representation formulas \eqref{entropexpl}.

Multiply \eqref{E1rho} by $\eta_\rho^\#$ and \eqref{E1u} by $\eta_u^\#$ and add the resulting equations to obtain
\begin{equation}
\eta^\#(\rho,m)_t + q^\#(\rho,m)_x=\left( -\frac{\beta}{2}|\mathbf{h}|^2 + \alpha g'(1/\rho)h(|\psi|^2) + \varepsilon u_x  \right)_x \eta_m^\#(\rho,m).\label{entrfluxeq}
\end{equation}

Define the function
\begin{equation}
f(x,t):=\big[ q^\#(\rho,m) +\big(\tfrac{\beta}{2}|\mathbf{h}|^2-\alpha g'(1/\rho)h(|\psi\circ Y|^2) -\varepsilon u_x \big)\eta_m^\#(\rho,m) \big](x,t).
\end{equation}

We claim that there is a function $a(t)$ taking values in $\Omega$ such that
\begin{equation}
\int_0^t |f(a(s),s)|ds \leq C,\label{claim}
\end{equation}
for some $C>0$ independent of $\varepsilon$.

Assuming this for now, we integrate \eqref{entrfluxeq} over $(a,x)\times(0,t)$ and get
\begin{align}
&\int_a^x (\eta^\#(\rho,m)-\eta^\#(\rho_0,m_0)) d\xi + \int_0^t q^\#(\rho,m) ds\nonumber\\
&= \int_0^t f(a(s),s)ds+\int_0^t  \left( -\frac{\beta}{2}|\mathbf{h}|^2+\alpha g'(1/\rho)h(|\psi\circ Y|^2)+\varepsilon u_x \right)\eta_m^\# ds \nonumber\\
&\hspace{5mm}-\int_0^t\int_a^x \left( -\frac{\beta}{2}|\mathbf{h}|^2+\alpha g'(1/\rho)h(|\psi|^2)+\varepsilon u_x \right)(\eta_{m\rho}^\#\rho_x + \eta_{mu}^\# u_x )dx\hspace{.5mm}ds.\label{etanum}
\end{align}

First, from \eqref{2.14} we see that
\[
\int_0^t\int_\Omega q^\#(\rho,m)dx\hspace{.5mm}ds \geq C^{-1}\int_0^t\int_\Omega (\rho|u|^3+\rho^{\gamma+\vartheta}).
\]

Second, from Lemma \ref{uniform1} and \eqref{2.14}
\[
\int_0^t\int_\Omega (|\eta^\#(\rho,m)|+|\eta^\#(\rho_0,m_0)|)dx\leq C.
\]

Next, using the fact that $\mathbf{h}|_{x=0}=0$ we see that
\begin{equation}
\beta^{1/2} |\mathbf{h}|^2 \leq 2\left(\beta \int_\Omega |\mathbf{h}|^2\right)^{1/2}\left( \int_\Omega |\mathbf{h}_x|^2\right)^{1/2}\leq C\left( \int_\Omega |\mathbf{h}_x|^2\right)^{1/2}.\label{hL1infty}
\end{equation}

Similarly,
\begin{equation}
\varepsilon^{1/2}|u|\leq \int_\Omega \varepsilon^{1/2}|u_x|dx\leq C\left( \varepsilon\int_\Omega u_x^2dx\right)^{1/2}.\label{uL1infty}
\end{equation}

Using these two observations along with \eqref{2.15} and Lemma \ref{uniform1}
\begin{align*}
&\int_0^t\int_\Omega \left( -\frac{\beta}{2}|\mathbf{h}|^2+\alpha g'(1/\rho)h(|\psi|^2)+\varepsilon u_x \right)\eta_m^\#dx\hspace{.5mm}ds\\
&\hspace{5mm}\leq C+C\int_0^t\int_\Omega (\nu|\mathbf{h}_x|^2+\varepsilon u_x^2)dx\hspace{.5mm}ds + C(\beta+\alpha^2+\varepsilon)\int_0^t\int_\Omega (u_x^2+\rho^{\gamma-1})dx\hspace{.5mm}ds\\
&\hspace{5mm}\leq C.
\end{align*}

Finally, by the same reasoning and using \eqref{2.16} and Lemma \ref{uniform3} we see that
\begin{align*}
&\int_0^t\int_\Omega \left( -\frac{\beta}{2}|\mathbf{h}|^2+\alpha g'(1/\rho)h(|\psi|^2)+\varepsilon u_x \right)(\eta_{m\rho}^\#\rho_x + \eta_{mu}^\# u_x )dx\hspace{.5mm}ds\\
&\hspace{5mm}\leq C+C\int_0^t\int_\Omega \nu|\mathbf{h}_x|^2dx\hspace{.5mm}ds+C(\beta+\alpha^2+\varepsilon)\int_0^t\int_\Omega u_x^2 dx\hspace{.5mm}ds\\
&\hspace{15mm}+C(\beta+\alpha^2+\varepsilon)\int_0^t\int_\Omega \rho_x^2\rho^{\gamma-3} dx\hspace{.5mm}ds\\
&\hspace{5mm}\leq C.
\end{align*}

Taking this information into account, integrating \eqref{etanum} over $\Omega$ and using \eqref{claim} we obtain \eqref{highint2}.

In order to complete the proof we have to prove our claim. For this, fix $k\in\mathbb{N}$ large enough so that $\gamma \geq \max\{ 1+\tfrac{2}{2k-3}, 1+\tfrac{1}{2(k-1)} \}$ and observe that
\[
\rho(x,t)\min_{z\in\Omega}|f(z,t)|^{1/k}\leq \rho(x,t)|f(x,t)|^{1/k}\leq \rho(x,t)\max_{z\in\Omega}|f(z,t)|^{1/k}.
\]

Integrating over $\Omega$ and using \eqref{masscons} we see that for a.e. $t$ there is a point $a=a(t)\in\Omega$ such that
\[
|f(a(t),t)|=\left( \int_\Omega \rho_0 dx \right)^{-k}\left( \int_\Omega\rho(x,t)|f(x,t)|^{1/k} dx \right)^k.
\]

Let us show that 
\begin{equation}
\int_0^t\left( \int_\Omega\rho(x,s)|f(x,s)|^{1/k} dx \right)^kds\leq C.\label{claim1}
\end{equation}

On the one hand, since $k$ was chosen so that $\gamma\geq 1+\tfrac{2}{2k-3}$ (which implies that $\tfrac{1}{2k}\leq \tfrac{\gamma-1}{3\gamma-1}$), we can use \eqref{2.14} and \eqref{unifE1} in order to show that
\begin{align*}
&\int_\Omega \rho |q^\#(\rho,m)|^{1/k}\\
&\leq C\int_\Omega \big( \rho^{(2k-1)/2k}(\rho u^2)^{3/2k}+\rho^{1+(3\gamma-1)/2k} \big)dx\\
&\leq C\left( \int_\Omega\rho^{(2k-1)/(2k-3)}dx \right)^{(2k-3)/2k}\left( \int_\Omega \rho u^2 dx \right)^{3/2k} + C\int_\Omega \rho^{1+(3\gamma-1)/2k} dx\\
&\leq C\left(1+ \int_\Omega\rho^\gamma dx \right)^{(2k-3)/2k}\left( \int_\Omega \rho u^2 dx \right)^{3/2k} + C\left(1+\int_\Omega \rho^\gamma dx\right)\leq C
\end{align*}

On the other hand, since $\gamma\geq 1+\tfrac{1}{2(k-1)}$ we have that
\begin{align*}
&\int_\Omega \rho |\varepsilon u_x \eta_m^\#|^{1/k}dx\\
&\leq \varepsilon^{1/2k}\int_\Omega \rho^{1-1/2k}(\varepsilon u_x^2)^{1/2k}((\rho u^2)^{1/2k}+\rho^{\gamma/2k})dx\\
&\leq C\left( \int_\Omega \rho^{1+1/2(k-1)}dx \right)^{(k-1)/k}\left(\int_\Omega \varepsilon u_x^2 dx \right)^{1/2k} \left( \int_\Omega (\rho u^2 +\rho^\gamma) dx \right)^{1/2k} \\
&\leq C \left(1+ \int_\Omega \rho^\gamma dx \right)^{(k-1)/k}\left(1+\int_\Omega \varepsilon u_x^2 dx \right)^{1/k}\\
&\leq C\left(1+\int_\Omega \varepsilon u_x^2 dx \right)^{1/k}.
\end{align*}

With this, we conclude that
\[
\int_0^t \left( \int_\Omega \rho |\varepsilon u_x\eta_m^\#|^{1/k}dx \right)^kds \leq C.
\]

Finally, by the same reasoning, we see that
\begin{align*}
&\int_\Omega \rho \left| \left(\frac{\beta}{2}|\mathbf{h}|^2-\alpha g'(1/\rho)h(|\psi\circ Y|^2) \right)\eta_m^\#\right|^{1/k}dx\\
&\qquad\leq C\left(1+\nu\int_\Omega |\mathbf{h}_x|^2dx\right)^{1/k}\left( \int_\Omega \rho dx \right)^{(2k-1)/2k}\left( \int_\Omega (\rho u^2+\rho^\gamma)dx \right)^{1/2k}\\
&\qquad \leq C\left(1+\nu\int_\Omega |\mathbf{h}_x|^2dx\right)^{1/k},
\end{align*}
which implies that
\[
\int_0^t\left( \int_\Omega \rho \left| \left(\frac{\beta}{2}|\mathbf{h}|^2-\alpha g'(1/\rho)h(|\psi\circ Y|^2) \right)\eta_m^\#\right|^{1/k}dx\right)^k ds\leq C,
\]
thus proving \eqref{claim1}.
\end{proof}

These last four Lemmas provide the necessary uniform estimates that allow us apply Chen and Perepelitsa's compactness scheme in order to deal with the convergence in the continuity equation \eqref{E1rho} and in the momentum equation \eqref{E1u}. They also suffice to handle the convergence issues in equations \eqref{E1w}, \eqref{E1h} and \eqref{E1Sch} to the extent that was explained in Section \ref{limit}. Yet, in order to address the convergence issues in the thermal energy equation \eqref{E1Q} we need one more estimate that reads as follows.

\begin{lemma}\label{uniform5}
Let $\delta=o(\varepsilon)$. Assume that
\[
C^{-1}\leq \int_\Omega \rho_0 dx\leq C_0,\hspace{10mm}\int_\Omega \rho_0 e(\rho_0,\theta_0)dx\leq C_0
\]
for some $C_0>0$ independent of $\varepsilon$. Then,
\begin{equation}
\int_0^t\int_\Omega (\theta^{q+1}+|(\theta^{q/2})_x|^2) dx\hspace{.5mm} ds \leq C,\label{uniftetq}
\end{equation}
where, $q$ is as in \eqref{K} and $C>0$ is constant independent of $\varepsilon$.
\end{lemma}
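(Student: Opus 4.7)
My plan is to prove the two pieces separately. The bound on $|(\theta^{q/2})_x|^2$ falls out immediately from the entropy dissipation already secured in Lemma~\ref{uniform1}: combining $\kappa(\theta)\ge k_1\theta^q$ from \eqref{K} with \eqref{entrunif},
\[
\frac{4k_1}{q^2}\int_0^t\!\!\int_\Omega |(\theta^{q/2})_x|^2\,dx\,ds \;=\; k_1\int_0^t\!\!\int_\Omega \theta^{q-2}\,\theta_x^2\,dx\,ds \;\le\; \int_0^t\!\!\int_\Omega \frac{\kappa\,\theta_x^2}{\theta^2}\,dx\,ds \;\le\; C.
\]

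For the $\theta^{q+1}$ bound the natural multiplier is $\theta$. I plan to rewrite the thermal energy equation \eqref{E1Q} (with pressure \eqref{deltape+ptet}) in non-conservative form $\rho C_\vartheta(\theta)(\theta_t + u\theta_x) + \delta\theta p_\theta(\rho)\,u_x = (\kappa\theta_x)_x + \varepsilon u_x^2 + \mu|\mathbf{w}_x|^2 + \nu|\mathbf{h}_x|^2$, multiply through by $\theta$, invoke the continuity equation to put the transport part into divergence form with potential $\Phi(\theta):=\int_0^\theta z\,C_\vartheta(z)\,dz\ge \tfrac{e_1}{2}\theta^2$, and integrate by parts in $x$ using $\theta_x|_{\partial\Omega}=0$. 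This yields the identity
\[
\int_\Omega \rho\,\Phi(\theta)\,dx\Big|_0^t + \int_0^t\!\!\int_\Omega \kappa\,\theta_x^2\,dx\,ds \;=\; \int_0^t\!\!\int_\Omega \theta\bigl[\varepsilon u_x^2 + \mu|\mathbf{w}_x|^2 + \nu|\mathbf{h}_x|^2 - \delta\,\theta\,p_\theta(\rho)\,u_x\bigr]\,dx\,ds.
\]
Once the right-hand side is bounded uniformly, the inequality $\kappa\,\theta_x^2\ge k_1\theta^q\theta_x^2 = \tfrac{4k_1}{(q+2)^2}|(\theta^{(q+2)/2})_x|^2$ delivers $\int_0^t\|(\theta^{(q+2)/2})_x\|_{L^2_x}^2\,ds\le C$. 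Combined with the base-point estimate $\theta(b(t),t)\le C$ (valid for some $b(t)\in\Omega$ because $\int_\Omega\rho\,\theta\,dx\le C$ from Lemma~\ref{uniform1} via $e\ge e_1\theta$, and $\int_\Omega\rho\,dx\ge C^{-1}$) and the one-dimensional Sobolev embedding $H^1(\Omega)\hookrightarrow L^\infty(\Omega)$, this yields $\theta^{q+2}(x,t)\le C + C\|(\theta^{(q+2)/2})_x\|_{L^2_x}^2$ and hence $\int_0^t\!\!\int_\Omega\theta^{q+2}\,dx\,ds\le C$; the target $\int_0^t\!\!\int_\Omega\theta^{q+1}\,dx\,ds\le C$ follows by H\"older in $x$.

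The main obstacle is thus the uniform control of the right-hand side. For each viscous source I plan to decompose $\theta = \theta(b(t),t) + (\theta-\theta(b(t),t))$: the bounded portion produces contributions already integrable by Lemma~\ref{uniform1}, while for the oscillation I intend to use $\|\theta-\theta(b(t),t)\|_{L^\infty_x}\le\|\theta_x\|_{L^1_x}\le C\,(\int_\Omega\kappa\,\theta_x^2\,dx)^{1/2}$ (valid because $\kappa\ge k_1$), combined with Young's inequality to absorb a small multiple of $\int_\Omega\kappa\,\theta_x^2\,dx$ back into the left-hand side. The $\delta\theta^2 p_\theta u_x$ term is to be handled by Cauchy--Schwarz together with the bound $\int_0^t\!\!\int_\Omega\delta\,\rho\,\theta\,p_\theta\,dx\,ds\le C$ from Lemma~\ref{uniform3} and the smallness $\delta=o(\varepsilon)$. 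The delicate bookkeeping is the time integrability of the remainder left after the Young absorption: the hypothesis $q\ge 2+2r$ in \eqref{K} provides exactly the margin needed to balance the Young exponents against the $L^1_t$ bounds on $\int_\Omega(\varepsilon u_x^2+\mu|\mathbf{w}_x|^2+\nu|\mathbf{h}_x|^2)\,dx$ from \eqref{entrunif}, so that the estimate closes uniformly in $\varepsilon$.
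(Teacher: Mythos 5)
Your treatment of the gradient term is fine and coincides with the paper's one\mbox{-}line argument: $|(\theta^{q/2})_x|^2=\tfrac{q^2}{4}\theta^{q-2}\theta_x^2\le C\,\kappa(\theta)\theta_x^2/\theta^2$ by \eqref{K}, so \eqref{entrunif} finishes it. The problem is the $\theta^{q+1}$ bound. Multiplying \eqref{E1Q} by $\theta$ produces on the right-hand side the term $\int_0^t\int_\Omega\theta\,(\varepsilon u_x^2+\mu|\mathbf{w}_x|^2+\nu|\mathbf{h}_x|^2)\,dx\,ds$, and your plan for its oscillation part does not close. After writing $\|\theta-\theta(b(t),t)\|_{L^\infty_x}\le C\big(\int_\Omega\kappa\theta_x^2\,dx\big)^{1/2}$ and applying Young's inequality, the remainder is $C_\eta\int_0^t D(s)^2\,ds$ with $D(s)=\int_\Omega(\varepsilon u_x^2+\mu|\mathbf{w}_x|^2+\nu|\mathbf{h}_x|^2)\,dx$. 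Uniformly in $\varepsilon$ the only available information is $D\in L^1(0,T)$ (from \eqref{entrunif}); there is no uniform $L^2$-in-time bound on the dissipation (the $L^2(0,T;H^2)$ and $u_y\in L^4$ estimates of Section 3 are for fixed $\varepsilon$ and degenerate as $\varepsilon\to0$), and the hypothesis $q\ge 2+2r$ relates the growth of $\kappa$ to that of $C_\vartheta$ but supplies no such time integrability. Pairing $M_\theta\in L^2_t$ (from \eqref{EMtetint}) with $D\in L^1_t$ does not help either, since H\"older then still demands $D\in L^2_t$. So the central step of your plan is a genuine gap.

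The paper avoids the multiplier altogether: it rewrites \eqref{E1Q} as $\mathcal{K}_{xx}=(\rho Q(\theta))_t+(\rho uQ(\theta))_x+\delta\theta p_\theta(\rho)u_x-\varepsilon u_x^2-\mu|\mathbf{w}_x|^2-\nu|\mathbf{h}_x|^2$ with $\mathcal{K}(\theta)=\int_0^\theta\kappa(z)\,dz$, integrates once in space from the boundary (where $\mathcal{K}_x=0$) and once in time, and then integrates once more in space from a base point $b(t)$ at which $\theta(b(t),t)\le C$. In this way the dissipation enters only through its $L^1(\Omega\times(0,t))$ norm, which \eqref{entrunif} already controls, no absorption is needed, and one obtains $\int_0^t\mathcal{K}\,ds\le C$ directly; since $\theta^{q+1}\le C\,\mathcal{K}(\theta)$ by \eqref{K}, this is exactly the first half of \eqref{uniftetq}. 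If you insist on a multiplier argument, it must be a bounded multiplier such as $(1+\theta)^{-\omega}$ (used by the paper in Section 6 for a different purpose), which lowers rather than raises the attainable power of $\theta$.
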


\begin{proof}
Let us define $\mathcal{K}$ as in Section \ref{limit} as
\[
\mathcal{K}(\theta):=\int_0^\theta \kappa (z)dz.
\]

Then, from \eqref{K} we have that
\[
C^{-1}(1+\theta^{q+1})\leq \mathcal{K}(\theta)\leq C(1+\theta^{q+1}).
\]

Also, note that equation \eqref{E1Q} can be rewritten as
\begin{equation}
\mathcal{K}_{xx}=(\rho Q(\theta))_t+(\rho u Q(\theta))_x+\delta\theta p_\theta(\rho) u_x-\varepsilon u_x^2 - \mu |w_x|^2 - \nu |\mathbf{h}_x|^2.
\end{equation}

Realizing that $(\mathcal{K}_x)|_{x=0}=0$, we integrate this equation over $(0,x)\times(0,t)$ to obtain
\begin{align*}
\int_0^t \mathcal{K}_x ds &= \int_0^x (\rho Q(\theta)-\rho_0 Q(\theta_0) )d\xi+\int_0^t \rho Q(\theta)u ds + \delta\int_0^t\int_0^x \theta p_\theta u_x d\xi\hspace{.5mm}ds \\
 &\hspace{10mm}-\int_0^t\int_0^x (\varepsilon u_x^2 + \mu |w_x|^2 + \nu |\mathbf{h}_x|^2)d\xi\hspace{.5mm}ds.
\end{align*}

Let us choose some $b=b(t)\in\Omega$ such that $\theta(b(t),t)=\left( \int_\Omega \rho_0 dx \right)^{-1}\left( \int_\Omega \rho \theta dx\right)$ $\leq C$. Then, integrating the above equality from $b(t)$ to $x$ (with respect to the space variable) and using Lemma \ref{uniform1} we obtain
\[
\int_0^t \mathcal{K} ds \leq C.
\]

In particular,
\[
\int_0^t\int_\Omega \theta^{q+1}dx\hspace{.5mm}ds\leq C.
\]

In order to conclude, we observe that 
\[
\int_0^t\int_\Omega|\nabla(\theta^{q/2})_x|^2 dx\hspace{.5mm} ds \leq C+C\int_0^t\int_\Omega\frac{\kappa \theta_x^2}{\theta^2} dx\hspace{.5mm} ds\leq C.
\]
\end{proof}

With these estimates at hand we are ready to pass to the limit as $\varepsilon\to 0$

\section{Limit process}\label{process}

\subsection{Limit in the continuity and momentum equations}

Let $(\rho^\varepsilon,u^\varepsilon,\mathbf{w}^\varepsilon,\mathbf{h}^\varepsilon,\theta^\varepsilon,\psi^\varepsilon)$ be the unique global solution of \eqref{E1rho}-\eqref{E1Sch}. Let us consider the sequence $(\rho^\varepsilon,u^\varepsilon)$. The first step is to apply the Young measures theorem to the sequence $(\rho^\varepsilon,u^\varepsilon)$. Note that this sequence takes values in the set $\mathcal{H}:=\{ (\rho,u)\in\mathbb{R}^2:\rho > 0 \}$. Nonetheless, we do not have any uniform estimates that guarantee that this sequence takes values on a fixed compact of $\mathcal{H}$.

With this in mind, and following \cite{CP} (cf. \cite{LW}), we consider a compactification $\mathbb{H}$ of $\mathcal{H}$ such that the space $C(\mathbb{H})$ is isometrically isomorphic to the space of continuous functions $\phi\in C(\overline{\mathcal{H}})$ satisfying that $\phi(\rho,u)$ is constant on the vacuum $\{ \rho=0\}$ and that the map $(\rho,u)\to \lim_{s\to \infty}\phi(s\rho,su)$ belong to $C(\mathbb{S})\cap \mathcal{H}$, where $\mathbb{S}\subseteq \mathbb{R}^2$ is the unit circle. Of course, $\mathcal{H}$ is naturally embedded in $\mathbb{H}$ (note that the vacuum line $V=\{ \rho=0\}$ is identified to a single point in $\mathbb{H}$).

By the Young measures theorem (\cite[Theorem 2.4]{AM}, also \cite{B}) there exists a subsequence, still denoted $(\rho^\varepsilon,u^\varepsilon)$ and a weakly measurable mapping from $\Omega\times [0,\infty)$ to $\text{Prob}(\mathbb{H})$, the space of probability measures in $\mathbb{H}$, $(x,t)\to\nu_{x,t}$ such that for all $\phi\in C(\mathbb{H})$
\[
\phi(\rho^\varepsilon,u^\varepsilon)\rightharpoonup \int_{\mathbb{H}}\phi(\rho,u)d\nu_{x,t},\hspace{5mm}\text{ weakly}-*\text{ in }L^\infty(\Omega\times [0,\infty)).
\]

In order to prove strong convergence of the sequence $(\rho^\varepsilon,\rho^\varepsilon  u^\varepsilon)$, and consequently that the limit $(\rho,\rho u)$ is a weak solution of the isentropic Euler equations \eqref{Eulerrho}, \eqref{Euleru},  it suffices to show that the Young measures are reduced to delta masses. That is, we have to show that
\begin{equation}
\nu_{x,t}=\delta_{(\rho(x,t),\rho(x,t) u(x,t))}\text{ for a.e. }(x,t).\label{nueqdelta}
\end{equation}

This is achieved by showing that the parametrized Young measure satisfies the Tartar-Murat commutator relation, from which the arguments in \cite{CP,C,DC} (cf. \cite{DP,LPS,LPT}) imply \eqref{nueqdelta}.

To this end, just as in \cite{CP}, we can show the following.
\begin{proposition}
The following statements hold: 
\begin{itemize} 
\item[(i)]
\begin{equation}
\int_\mathcal{H}(\rho^{\gamma+1}+\rho|u|^3)d\nu_{x,t}\in L^1_{loc}(\Omega\times[0,\infty)).\label{rhoulim}
\end{equation}
\item[(ii)]Let $\phi(\rho,u)$ be a function such that
  \begin{enumerate}
  \item[(a)] $\phi$ is continuous on $\overline{\mathcal{H}}$ and zero on $\partial \mathcal{H}$ (in the vacuum);
  \item[(b)] $\text{supp}\phi\subseteq\{ (\rho,u):\rho^\vartheta+u\geq -a, u-\rho^\vartheta\leq a \}$ for some constant $a>0$;
  \item[(c)] $|\phi(\rho,u)|\leq \rho^{\beta(\gamma+1)}$ for all $(\rho,u)$ with large $\rho$ and some $\beta\in(0,1)$. 
\end{enumerate}  
  Then, $\phi$ is $\nu_{x,t}$-integrable and
  \begin{equation}
  \phi(\rho^\varepsilon,u^\varepsilon)\rightharpoonup\int_\mathcal{H}\phi d\nu_{x,t} \hspace{5mm} \text{ in }L^1_{loc}(\Omega\times[0,\infty)).\label{testampl}
  \end{equation}
\item[(iii)] For $\nu_{x,t}$ viewed as an element of $(C(\mathbb{H}))^*$
\[
\nu_{x,t}[\mathbb{H} \setminus (\mathcal{H}\cup V)]=0,
\]
meaning that $\nu_{x,t}$ is concentrated at $\mathcal{H}$ and or the vacuum $V=\{ \rho=0\}$.
\end{itemize}
\end{proposition}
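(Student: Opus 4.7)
The plan is to derive (i) and (iii) together from the uniform $L^1$ bounds of Lemmas \ref{uniform3} and \ref{uniform4}, and then to obtain (ii) as a consequence by an equi-integrability/truncation argument. The key idea throughout is to reduce statements about the unbounded integrands $\rho^{\gamma+1}+\rho|u|^3$ and $\phi(\rho,u)$ to statements about bounded continuous functions on the compactification $\mathbb{H}$, which is where the Young measure representation applies directly.

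For (i), I would fix a compact set $K\subset\Omega\times[0,\infty)$ and for each $N>0$ introduce a smooth cutoff $\chi_N(\rho,u)\in C_c(\mathcal{H})$ approaching the characteristic function of $\{\rho^{\gamma+1}+\rho|u|^3\leq N\}$. Then $(\rho^{\gamma+1}+\rho|u|^3)\chi_N$ extends continuously to $\mathbb{H}$ (it is bounded and vanishes near the compactification boundary at infinity, and is constant on the vacuum since $\rho=0$ there), so by the Young measure property
\[
\int_K (\rho^\varepsilon)^{\gamma+1}\chi_N + \rho^\varepsilon|u^\varepsilon|^3\chi_N\,dxdt \longrightarrow \int_K \int_{\mathbb{H}} (\rho^{\gamma+1}+\rho|u|^3)\chi_N \,d\nu_{x,t}\,dxdt.
\]
The left hand side is bounded uniformly in $N$ by Lemmas \ref{uniform3} and \ref{uniform4}. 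Taking $N\to\infty$ and applying monotone convergence yields $\int_K\int_{\mathcal{H}}(\rho^{\gamma+1}+\rho|u|^3)d\nu_{x,t}\,dxdt<\infty$, proving (i).

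For (iii), observe that every point of $\mathbb{H}\setminus(\mathcal{H}\cup V)$ is a ``point at infinity'' identified with a direction in $\mathbb{S}\cap\overline{\mathcal{H}}$, i.e.\ a limit of sequences $(\rho_n,u_n)\in\mathcal{H}$ with $\rho_n^2+u_n^2\to\infty$. For any such neighbourhood, one can build a continuous cutoff $\Psi\in C(\mathbb{H})$ supported there, with $\Psi\leq 1$, and such that on the preimage in $\mathcal{H}$ it is dominated by $(\rho^{\gamma+1}+\rho|u|^3)/M$ for arbitrarily large $M$. Applying the Young measure representation for this $\Psi$ and using the bound from (i), letting $M\to\infty$ forces $\nu_{x,t}$ to put no mass in the neighbourhood. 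A covering argument then gives $\nu_{x,t}[\mathbb{H}\setminus(\mathcal{H}\cup V)]=0$ for a.e.\ $(x,t)$.

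For (ii), the strategy is equi-integrability. Under the support condition (b), the function $\phi(\rho,u)$ is controlled by a multiple of $(1+\rho|u|^2+\rho^\gamma)$ on any region of bounded $\rho$, and by (c) it grows like $\rho^{\beta(\gamma+1)}$ on the unbounded part. Since $\beta<1$ and we have the $L^1_{loc}$ bound on $\rho^{\gamma+1}$ from Lemma \ref{uniform3}, a standard H\"{o}lder-type argument gives that $\{\phi(\rho^\varepsilon,u^\varepsilon)\}$ is equi-integrable on compacts. I would then approximate $\phi$ by a sequence $\phi_N\in C(\mathbb{H})$ (truncating $\phi$ where $\rho\geq N$ and smoothing), apply the Young measure representation to each $\phi_N$ to get $\phi_N(\rho^\varepsilon,u^\varepsilon)\rightharpoonup\int\phi_N d\nu_{x,t}$ in $L^1_{loc}$, and use equi-integrability combined with (iii) (so that no mass escapes to infinity in the Young measure) to pass to the limit $N\to\infty$ on both sides via Vitali's theorem.

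The main obstacle is the interplay between the compactification $\mathbb{H}$ and the unboundedness of the moments: one must carefully check that the support condition (b), which controls the Riemann invariants $u\pm\rho^\vartheta$, guarantees that $\phi$ extends continuously through the ``infinity'' part of $\mathbb{H}$ on which, by (iii), the Young measure is harmless. The compatibility between (b) and the structure of $\mathbb{H}$ inherited from the $\gamma$-law entropy kernel \eqref{entrkernel} is what makes the Vitali argument go through; everything else reduces to monotone and dominated convergence using the uniform estimates of Section \ref{estimates}.
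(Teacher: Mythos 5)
The paper itself does not prove this proposition: it states that, once Lemmas \ref{uniform3} and \ref{uniform4} are in place, the proof is identical to the analogue in \cite{CP}, and omits all details. Your sketch is a faithful reconstruction of that standard argument --- truncation plus the uniform space--time bounds on $\rho^{\gamma+1}$ and $\rho|u|^3$ and monotone convergence for (i), Chebyshev-type domination for (iii), and equi-integrability together with Vitali's theorem for (ii), using that the support condition (b) forces $|u|\leq C(1+\rho^\vartheta)$ so that escape to infinity within $\mathrm{supp}\,\phi$ can only happen through large $\rho$ --- so there is nothing in the paper to contrast it with.

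One step in your (iii) deserves care. The claim that a cutoff supported near an arbitrary point of $\mathbb{H}\setminus(\mathcal{H}\cup V)$ is dominated by $(\rho^{\gamma+1}+\rho|u|^3)/M$ fails along sequences such as $(\rho_n,u_n)=(n^{-4},n)$, for which $|(\rho_n,u_n)|\to\infty$ while both moments tend to zero. The argument is rescued by the structure of the compactification: every $\phi\in C(\mathbb{H})$ is constant on the vacuum line, so its radial limit in the directions $(0,\pm 1)$ coincides with its value at $V$; hence those two boundary directions are identified with the vacuum point and do not lie in $\mathbb{H}\setminus(\mathcal{H}\cup V)$. The genuinely new points at infinity all have strictly positive $\rho$-component, and on sufficiently small neighbourhoods of these one has $\rho\geq R$ with $R$ arbitrarily large, which is exactly what makes the domination by $\rho^{\gamma+1}/M$ legitimate. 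With that observation your covering argument, and hence the whole proposal, goes through.
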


In view of this proposition, and by Lemma \ref{boundsetaq} the entropy pairs are $\nu_{x,t}$-integrable and we can use them as test functions. Once we have Lemmas \ref{uniform3} and \ref{uniform4} the proof follows exactly as the analogue in \cite{CP} and we omit the details.

The next step to take is to apply the Div-Curl Lemma in order to prove the commutator relations for the Young measures.  For this, we need the following result.
\begin{proposition}\label{etaqcomp}
Let $\zeta$ be any compactly supported $C^2$ function and let $(\eta^\zeta,q^\zeta)$ be the corresponding entropy pair given by \eqref{entropexpl}. Then the {\sl entropy dissipation measures}
\[
\eta^\zeta(\rho^\varepsilon,\rho^\varepsilon u^\varepsilon)_t + q^\zeta(\rho^\varepsilon,\rho^\varepsilon u^\varepsilon)_x
\]
belong to a compact of $H_{loc}^{-1}(\Omega\times[0,\infty))$.
\end{proposition}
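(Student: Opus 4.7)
My plan is to invoke Murat's form of the compensated compactness lemma: if a distribution sequence $f^\varepsilon$ is bounded in $W^{-1,p}_{\mathrm{loc}}$ for some $p>2$ and admits a decomposition $f^\varepsilon=g^\varepsilon+h^\varepsilon$ with $g^\varepsilon$ precompact in $H^{-1}_{\mathrm{loc}}$ and $h^\varepsilon$ bounded in the space of Radon measures $\mathcal{M}_{\mathrm{loc}}$, then $f^\varepsilon$ itself is precompact in $H^{-1}_{\mathrm{loc}}$. The task therefore splits into producing such a decomposition of the entropy dissipation and verifying the ambient $W^{-1,p}$ bound.

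Using \eqref{E1rho}, \eqref{E1u} and the entropy--flux relation $\nabla q^\zeta=\nabla\eta^\zeta\,\nabla F$ for the $\gamma$-law flux $F(\rho,m)=(m,\,m^2/\rho+a\rho^\gamma)^\top$, a computation parallel to the one leading to \eqref{entrfluxeq} (now also carrying the thermal pressure) produces
\begin{equation*}
\eta^\zeta_t+q^\zeta_x=\bigl(\eta^\zeta_m[\varepsilon u^\varepsilon_x+R_\varepsilon]\bigr)_x-(\eta^\zeta_m)_x\,[\varepsilon u^\varepsilon_x+R_\varepsilon],
\end{equation*}
where
\begin{equation*}
R_\varepsilon:=-\tfrac{\beta}{2}|\mathbf{h}^\varepsilon|^2-\delta\theta^\varepsilon p_\theta(\rho^\varepsilon)+\alpha\,g'(1/\rho^\varepsilon)\,h(|\psi^\varepsilon\circ\mathbf{Y}^\varepsilon|^2).
\end{equation*}
I will take the first summand as $g^\varepsilon$ and the second as $h^\varepsilon$.

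For $g^\varepsilon$ the plan is to show that $\eta^\zeta_m[\varepsilon u^\varepsilon_x+R_\varepsilon]\to 0$ strongly in $L^2_{\mathrm{loc}}$, which immediately makes its $x$-derivative precompact in $H^{-1}_{\mathrm{loc}}$. Since $|\eta^\zeta_m|\leq C_\zeta$ by Lemma~\ref{boundsetaq}(iii), it suffices to control each piece: $\varepsilon u^\varepsilon_x=\sqrt\varepsilon\cdot\sqrt\varepsilon u^\varepsilon_x$ has $L^2$-norm $O(\sqrt\varepsilon)$ by Lemma~\ref{uniform1}; the Sobolev-type bound $\sup_x\beta|\mathbf{h}^\varepsilon|^2\leq C\sqrt\beta\,\|\mathbf{h}^\varepsilon_x\|_{L^2_x}$ from \eqref{hL1infty} combined with $\beta=o(\varepsilon)$ gives vanishing in $L^\infty_xL^2_t$; a H\"older estimate using $\theta^\varepsilon\in L^2_tL^\infty_x$ (see \eqref{EMtetint}) and $\rho^{\gamma/2}\in L^\infty_tL^2_x$ (from \eqref{unifE1}) together with $\delta=o(\varepsilon)$ disposes of the thermal term; and the SW--LW contribution is uniformly $O(\alpha)$ by \eqref{gh} and $\|\psi^\varepsilon\|_{L^\infty}\leq C$. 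For $h^\varepsilon$ the plan is to bound $(\eta^\zeta_m)_x[\varepsilon u^\varepsilon_x+R_\varepsilon]$ in $L^1_{\mathrm{loc}}$. In $(\rho,u)$-coordinates Lemma~\ref{boundsetaq}(iii) yields $|(\eta^\zeta_m)_x|\leq C\bigl(\rho^{\vartheta-1}|\rho^\varepsilon_x|+|u^\varepsilon_x|\bigr)$, so Young's inequality lets me absorb the controlled $\varepsilon$-weighted squares $\varepsilon u_x^2$ and $\varepsilon\rho^{\gamma-3}\rho_x^2$ via Lemmas~\ref{uniform1}--\ref{uniform2}, while the complementary terms $\beta^2|\mathbf{h}^\varepsilon|^4/\varepsilon$, $\delta^2(\theta^\varepsilon)^2p_\theta^2/\varepsilon$ and $\alpha^2/\varepsilon$ turn out to be $L^1$-bounded thanks to the calibrated rates $\beta=o(\varepsilon)$, $\delta=o(\varepsilon)$, $\alpha=o(\sqrt\varepsilon)$, the magnetic piece once again using \eqref{hL1infty}.

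For the ambient $W^{-1,p}_{\mathrm{loc}}$ bound with some $p>2$, I will use a uniform $L^p_{\mathrm{loc}}$ bound on $(\eta^\zeta,q^\zeta)$: when $\gamma\in(1,3]$, Lemma~\ref{boundsetaq}(i) gives $|\eta^\zeta|+|q^\zeta|\leq C_\zeta\rho\in L^{\gamma+1}_{\mathrm{loc}}$ by Lemma~\ref{uniform3}; when $\gamma>3$, the sharper bound $|q^\zeta|\leq C_\zeta\rho\max\{1,\rho^\vartheta\}$ combined with Lemma~\ref{uniform4} yields an $L^p$ bound with $p=(3\gamma-1)/(\gamma+1)>2$. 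Murat's lemma then closes the argument. The hardest part will be the careful bookkeeping of the coupling terms in $R_\varepsilon$: each one must be split between $g^\varepsilon$ and $h^\varepsilon$ consistently with the uniform estimates of Section~\ref{estimates}, and it is precisely at this stage that the calibrated rates $\alpha=o(\sqrt\varepsilon)$, $\beta=o(\varepsilon)$, $\delta=o(\varepsilon)$ are used in their full strength.
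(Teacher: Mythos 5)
Your proposal is correct and follows essentially the same route as the paper: the same entropy dissipation identity obtained by multiplying \eqref{E1rho} and \eqref{E1u} by $\eta_\rho^\zeta$ and $\eta_m^\zeta$, the same splitting into an $x$-derivative of an $L^2$-vanishing quantity plus an $L^1$-bounded remainder (controlled via Lemma \ref{boundsetaq} and Lemmas \ref{uniform1}--\ref{uniform4} together with \eqref{hL1infty}, \eqref{EMtetint} and the rates $\alpha=o(\varepsilon^{1/2})$, $\beta=o(\varepsilon)$, $\delta=o(\varepsilon)$), and the same conclusion via the $L^{q_2}_{loc}$ bound on $(\eta^\zeta,q^\zeta)$ with $q_2>2$ and Murat's interpolation lemma. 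The only cosmetic difference is that you integrate the coupling terms by parts inside the decomposition, which is an equivalent rearrangement of the paper's \eqref{entrdissip}.
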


\begin{proof}
Multiplying \eqref{E1rho} by $\eta_\rho^\zeta$ and \eqref{E1u} by $\eta_m^\zeta$ and adding the resulting equations we obtain
\begin{align}
&\eta^\zeta(\rho^\varepsilon,m^\varepsilon)_t+q^\zeta(\rho^\varepsilon,m^\varepsilon)_x\nonumber\\
&= \varepsilon (\eta_m^\zeta (\rho^\varepsilon,\rho^\varepsilon u^\varepsilon)u_x^\varepsilon)_x - \varepsilon \eta_{mu}(\rho^\varepsilon,\rho^\varepsilon u^\varepsilon)|u_x^\varepsilon|^2-\varepsilon\eta_{m\rho}^\zeta(\rho^\varepsilon,\rho^\varepsilon u^\varepsilon)\rho_x^\varepsilon u_x^\varepsilon\nonumber\\
&\hspace{3mm}-(\delta\theta^\varepsilon p_\theta(\rho^\varepsilon)\eta_m^\zeta(\rho^\varepsilon,\rho^\varepsilon u^\varepsilon))_x + \delta \theta^\varepsilon p_\theta(\rho^\varepsilon) (\eta_{mu}^\zeta(\rho^\varepsilon,\rho^\varepsilon u^\varepsilon) u_x^\varepsilon + \eta_{m\rho}^\zeta(\rho^\varepsilon,\rho^\varepsilon u^\varepsilon)\rho_x^\varepsilon)\nonumber\\
&\hspace{3mm}-\left(\frac{\beta}{2}|\mathbf{h}^\varepsilon|^2 -\alpha g'(1/\rho^\varepsilon)h(|\psi^\varepsilon|^2)\right)_x\eta_m^\zeta(\rho^\varepsilon,\rho^\varepsilon u^\varepsilon).\label{entrdissip}
\end{align}

Using H\"{o}lder inequality and Lemmas \ref{boundsetaq}, \ref{uniform1} and \ref{uniform2} we see that
\begin{align*}
&|| \varepsilon \eta_{mu}(\rho^\varepsilon,\rho^\varepsilon u^\varepsilon)|u_x^\varepsilon|^2-\varepsilon\eta_{m\rho}^\zeta(\rho^\varepsilon,\rho^\varepsilon u^\varepsilon)\rho_x^\varepsilon u_x^\varepsilon ||_{L^1((\Omega\times(0,T))}\\
&\hspace{40mm}\leq C_\zeta ||(\varepsilon^{1/2}u_x^\varepsilon,\varepsilon^{1/2}(\rho^\varepsilon)^{\frac{\gamma-3}{2}}\rho_x^\varepsilon)||_{L^2(\Omega\times(0,T))}\leq C,
\end{align*}

Similarly, using \eqref{EMtetint} and \eqref{ptet}
\begin{align*}
&||\delta \theta^\varepsilon p_\theta(\rho^\varepsilon) (\eta_{mu}^\zeta(\rho^\varepsilon,\rho^\varepsilon u^\varepsilon) u_x^\varepsilon + \eta_{m\rho}^\zeta(\rho^\varepsilon,\rho^\varepsilon u^\varepsilon)\rho_x^\varepsilon)||_{L^1((\Omega\times(0,T))}\leq C,
\end{align*}
and using \eqref{hL1infty}, \eqref{Lag3}, \eqref{gh} and Lemmas \ref{uniform1} and \ref{uniform2}
\[
\left\Vert \left(\frac{\beta}{2}|\mathbf{h}^\varepsilon|^2 -\alpha g'(1/\rho^\varepsilon)h(|\psi^\varepsilon\circ Y|^2)\right)_x\eta_m^\zeta(\rho^\varepsilon,\rho^\varepsilon u^\varepsilon)\right\Vert_{L^1((\Omega\times(0,T))}\leq C
\] 

Also, note that
\[
\Vert \varepsilon \eta_m^\zeta (\rho^\varepsilon,\rho^\varepsilon u^\varepsilon)u_x^\varepsilon\Vert_{L^2((\Omega\times(0,T))}\leq C,
\]
and
\[
\Vert \delta\theta^\varepsilon p_\theta(\rho^\varepsilon)\eta_m^\zeta(\rho^\varepsilon,\rho^\varepsilon u^\varepsilon) \Vert_{L^2((\Omega\times(0,T))}\leq C.
\]

Now, we recall that $L^1(\Omega\times(0,T))$ is compactly embedded into $W^{-1,q}(\Omega\times(0,T))$ for any $1<q<2$, so that from equation \eqref{entrdissip} we can conclude that
\[
\eta^\zeta(\rho^\varepsilon,m^\varepsilon)_t+q^\zeta(\rho^\varepsilon,m^\varepsilon)_x \text{ belong to a compact subset of }W_{loc}^{-1,q_1}(\Omega\times[0,\infty)),
\]
for some $1<q_1<2$.

On the other hand, using the bounds in Lemma \ref{boundsetaq} and the estimates in Lemmas \ref{uniform3} and \ref{uniform4} we have that
\[
\eta^\zeta(\rho^\varepsilon,m^\varepsilon),q^\zeta(\rho^\varepsilon,m^\varepsilon) \text{ are uniformly bounded in }L_{loc}^2(\Omega\times [0,\infty)),
\]
which implies that
\[
\eta^\zeta(\rho^\varepsilon,m^\varepsilon)_t+q^\zeta(\rho^\varepsilon,m^\varepsilon)_x \text{ are uniformly bounded in }W_{loc}^{-1,q_2}(\Omega\times[0,\infty))
\]
where $q_2=\gamma+1>2$, when $\gamma \in (1,3]$ and $q_2=\frac{\gamma+\vartheta}{1+\vartheta}>2$ when $\gamma>3$. 

Finally, we recall Murat's Lemma (see \cite{Mu,Mu',T,C,DC}) stating that 
\[
\{\text{Compact of } W_{loc}^{-1,q}(\Omega)\}\cap\{ \text{Bounded of } W_{loc}^{-1,r}(\Omega)\}\subseteq \{ \text{Compact of }W_{loc}^{-1,p}(\Omega) \},
\] 
for any $1<q\leq p<r\leq \infty$, which implies that
\[
\eta^\zeta(\rho^\varepsilon,\rho^\varepsilon u^\varepsilon)_t + q^\zeta(\rho^\varepsilon,\rho^\varepsilon u^\varepsilon)_x
\]
belong to a compact of $H_{loc}^{-1}(\Omega\times[0,\infty))$.
\end{proof}

Let us introduce the following notation. First, we omit the first two arguments ($\rho$ and $u$) in the entropy kernel (recall \eqref{entrkernel}) and denote
\[
\chi(\xi)=[\rho^{2\vartheta}-(u-\xi)^2]_+^{\Lambda}.
\]
Second, given any function $f(\rho,u)$ with growth slower than $\rho|u|^3+\rho^{\gamma+\max\{ 1,\vartheta\}}$, we denote
\[
f(\rho^\varepsilon,u^\varepsilon)\rightharpoonup \overline{f(\rho,u)}(x,t):=\langle \nu_{x,t},f(\rho,u)\rangle.
\]
In other words, the overline stands for integration with respect to the young measure. 

We can finally show that our parametrized Young measure $\nu_{x,t}$ satisfies the commutator relations. Remember that we are assuming that $\alpha=o(\varepsilon^{1/2})$, $\beta=o(\varepsilon)$ and $\delta=o(\varepsilon)$.

\begin{proposition}
For each test function $\zeta(s)\in \{ \pm 1,\pm s, s^2 \}$ we have that
\begin{equation}
\langle \nu_{t,x}, \eta^\zeta\rangle_t+\langle \nu_{x,t},q^\zeta\rangle_x \leq 0,\hspace{5mm}\langle \nu_{x,t},\eta^\zeta\rangle(0,\cdot)=\eta(\rho_0,\rho_0 u_0),\label{desigentr}
\end{equation}
in the sense of distributions. Moreover, $\nu_{x,t}$ satisfies the following commutator relation
\begin{equation}
\vartheta(\xi_2-\xi_1)(\hspace{.5mm}\overline{\chi(\xi_1)\chi(\xi_2)} - \overline{\chi(\xi_1)}\hspace{1.5mm}\overline{\chi(\xi_2)}\hspace{.5mm})=(1-\vartheta)(\hspace{.5mm}\overline{u\chi(\xi_2)}\hspace{1.5mm}\overline{\chi(\xi_1)}-\overline{u\chi(\xi_1)}\hspace{1.5mm}\overline{\chi(\xi_2)}\hspace{.5mm}),\label{commutator}
\end{equation}
where, as before, $\vartheta=(\gamma-1)/2$.
\end{proposition}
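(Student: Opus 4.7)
The plan is to prove the two claims separately. The entropy inequality (\ref{desigentr}) comes from passing to the limit in (\ref{entrdissip}), while the commutator relation (\ref{commutator}) is obtained from the Div--Curl lemma applied to pairs of entropies and entropy fluxes.

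For (\ref{desigentr}), I start from identity (\ref{entrdissip}), already derived in the proof of Proposition \ref{etaqcomp}. The left-hand side converges in the sense of distributions to $\langle\nu_{x,t},\eta^\zeta\rangle_t+\langle\nu_{x,t},q^\zeta\rangle_x$, because $\eta^\zeta(\rho^\varepsilon,m^\varepsilon)$ and $q^\zeta(\rho^\varepsilon,m^\varepsilon)$ are uniformly bounded in $L^2_{\mathrm{loc}}$ by Lemma \ref{boundsetaq} combined with Lemmas \ref{uniform3}--\ref{uniform4}, so that the Young-measure convergence (\ref{testampl}) applies. The right-hand side must then be shown to produce a nonpositive distribution. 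The flux term $(\varepsilon\eta_m^\zeta u_x^\varepsilon)_x$ tests against a smooth cutoff as $O(\sqrt{\varepsilon})$, since $\|\eta_m^\zeta\|_\infty\le C_\zeta$ and $\|\varepsilon u_x^\varepsilon\|_{L^1}\le C\sqrt{\varepsilon}$ by Cauchy--Schwarz and Lemma \ref{uniform1}. For the linear choices $\zeta\in\{\pm 1,\pm s\}$, $\eta^\zeta$ is affine in $(\rho,m)$ and $\eta_{mu}^\zeta=\eta_{m\rho}^\zeta=0$, so the dissipative correction $-\varepsilon\eta_{mu}^\zeta|u_x^\varepsilon|^2-\varepsilon\eta_{m\rho}^\zeta\rho_x^\varepsilon u_x^\varepsilon$ vanishes identically; for $\zeta(s)=s^2$, a direct computation from (\ref{entropexpl}) shows $\eta^\zeta=c_1\rho u^2+c_2\rho^\gamma$ (a positive multiple of the mechanical energy), whence $\eta_{mu}^\zeta\equiv 2c_1>0$ and $\eta_{m\rho}^\zeta\equiv 0$, so $-\varepsilon\eta_{mu}^\zeta|u_x^\varepsilon|^2\le 0$ pointwise and passing to the limit produces a nonpositive Radon measure. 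The remaining terms of (\ref{entrdissip}) carry the small parameters $\delta$, $\beta$ and $\alpha$ and vanish in the sense of distributions by the scalings $\delta=o(\varepsilon)$, $\beta=o(\varepsilon)$, $\alpha=o(\varepsilon^{1/2})$, exactly as in the proof of Proposition \ref{etaqcomp}. The statement about the initial trace follows from hypotheses (iv)--(vi) of Theorem \ref{vanish}.

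For the commutator identity (\ref{commutator}), I fix two compactly supported $C^2$ functions $\zeta_1,\zeta_2$ and form the vector fields $\mathbf{V}^\varepsilon=(\eta^{\zeta_1},q^{\zeta_1})$ and $\mathbf{W}^\varepsilon=(q^{\zeta_2},-\eta^{\zeta_2})$, both uniformly bounded in $L^2_{\mathrm{loc}}$. Proposition \ref{etaqcomp} provides $H^{-1}_{\mathrm{loc}}$-compactness of $\partial_t\eta^{\zeta_i}+\partial_x q^{\zeta_i}$, which is exactly the compactness of $\mathrm{div}_{t,x}\mathbf{V}^\varepsilon$ and $\mathrm{curl}_{t,x}\mathbf{W}^\varepsilon$ required by the Div--Curl lemma; the latter then yields, in $\mathcal{D}'(\Omega\times(0,\infty))$, the identity $\overline{\eta^{\zeta_1}q^{\zeta_2}-q^{\zeta_1}\eta^{\zeta_2}}=\overline{\eta^{\zeta_1}}\,\overline{q^{\zeta_2}}-\overline{q^{\zeta_1}}\,\overline{\eta^{\zeta_2}}$. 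Substituting the kernel representations $\eta^{\zeta_i}=\int\zeta_i(\xi)\chi(\xi)\,d\xi$ and $q^{\zeta_i}=\int\zeta_i(\xi)\sigma(\xi)\,d\xi$, with $\sigma(\xi):=(\vartheta\xi+(1-\vartheta)u)\chi(\xi)$, and applying Fubini, both sides become double integrals against $\zeta_1(\xi_1)\zeta_2(\xi_2)$; arbitrariness of $\zeta_1,\zeta_2$ forces the integrands to coincide as distributions in $(\xi_1,\xi_2)$. The algebraic identity $\chi(\xi_1)\sigma(\xi_2)-\sigma(\xi_1)\chi(\xi_2)=\vartheta(\xi_2-\xi_1)\chi(\xi_1)\chi(\xi_2)$ simplifies the left-hand side, while expanding $\overline{\sigma(\xi_i)}=\vartheta\xi_i\overline{\chi(\xi_i)}+(1-\vartheta)\overline{u\chi(\xi_i)}$ on the right-hand side and regrouping gives exactly (\ref{commutator}).

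The principal obstacle is verifying the correct sign of the right-hand side of (\ref{entrdissip}) for the convex entropy $\zeta(s)=s^2$ once all the new magnetic, thermal, and Schr\"odinger-coupling correction terms (absent from \cite{CP}) are included: each such term has to be cashed against the only available coercive control $\varepsilon\|u_x^\varepsilon\|_{L^2}^2\le C$, using the precise scalings $\delta,\beta=o(\varepsilon)$ and $\alpha=o(\varepsilon^{1/2})$, the weighted density-derivative bound of Lemma \ref{uniform2}, and the $L^\infty$-type interpolation (\ref{hL1infty}) for the magnetic field. Once this bookkeeping is carried out, the Div--Curl step is entirely standard.
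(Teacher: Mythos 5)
Your overall strategy coincides with the paper's: the entropy inequality is obtained by passing to the limit in the viscous entropy-dissipation identity and checking that each right-hand-side term either vanishes or has a sign, and the commutator relation follows from the Div--Curl lemma applied to $(\eta^{\zeta_1},q^{\zeta_1})$ and $(q^{\zeta_2},-\eta^{\zeta_2})$ together with the kernel representation \eqref{entropexpl}. Your algebra for the commutator --- in particular $\chi(\xi_1)\sigma(\xi_2)-\sigma(\xi_1)\chi(\xi_2)=\vartheta(\xi_2-\xi_1)\chi(\xi_1)\chi(\xi_2)$ and the expansion of $\overline{\sigma(\xi_i)}$ --- is exactly the paper's computation and is correct, as are the observations that $\eta^\zeta_{m\rho}\equiv 0$ for the five test functions (since $\int s[1-s^2]_+^\Lambda\,ds=0$) and that $-\varepsilon\eta^{s^2}_{mu}|u_x^\varepsilon|^2\le 0$.

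There is, however, one concrete slip in the first half. You dispose of the flux term $(\varepsilon\eta_m^\zeta u_x^\varepsilon)_x$ by invoking $\|\eta_m^\zeta\|_\infty\le C_\zeta$. That bound (Lemma \ref{boundsetaq}, item (iii)) is stated only for \emph{compactly supported} $C^2$ functions $\zeta$, whereas here $\zeta(s)=s^2$ must be covered. By your own computation $\eta^{s^2}=c_1\rho u^2+c_2\rho^\gamma$, so $\eta_m^{s^2}=2c_1u$, which is unbounded precisely near vacuum, where the energy bound on $\rho|u|^2$ gives no pointwise control of $u$. For this entropy the flux term is $c\,\varepsilon u^\varepsilon u_x^\varepsilon$, and the product of your two estimates no longer yields $O(\sqrt{\varepsilon})$: combining $\varepsilon\|u_x^\varepsilon\|_{L^2}^2\le C$ with the interpolation \eqref{uL1infty} only shows that $\varepsilon u^\varepsilon u_x^\varepsilon$ is \emph{bounded} in $L^1$, not small. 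The paper records the correct growth $|\eta_m^\zeta(\rho^\varepsilon,m^\varepsilon)|\le C(1+|u^\varepsilon|)$ and handles the terms carrying this factor with the weighted estimates of Lemmas \ref{uniform1}--\ref{uniform2} and the scalings $\alpha=o(\varepsilon^{1/2})$, $\beta,\delta=o(\varepsilon)$; the $\varepsilon u^\varepsilon u_x^\varepsilon$ contribution is exactly the delicate point, and it requires a genuine argument (for instance splitting according to $\rho^\varepsilon\gtrless\sigma$ and using the higher integrability $\iint\rho^\varepsilon|u^\varepsilon|^3\le C$ of Lemma \ref{uniform4}) rather than a sup-norm bound on $\eta_m^\zeta$. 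Everything else in your proposal, including the treatment of the $\alpha$, $\beta$, $\delta$ terms and the Div--Curl step, matches the paper.
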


\begin{proof}
First we prove \eqref{desigentr}. Multiplying \eqref{E1rho} by $\eta_\rho^\zeta$ and \eqref{E1u} by $\eta_m^\zeta$ and adding the resulting equations we obtain
\begin{flalign}
&\eta^\zeta(\rho^\varepsilon,m^\varepsilon)_t + q^\zeta(\rho^\varepsilon,m^\varepsilon)_x & \nonumber\\
&\hspace{5mm}=(\varepsilon\eta_m^\zeta(\rho^\varepsilon,m^\varepsilon)u_x^\varepsilon - \delta\theta^\varepsilon p_\theta(\rho^\varepsilon))_x&\nonumber\\
&\hspace{15mm}-\big(\varepsilon u_x^\varepsilon-\delta\theta^\varepsilon p_\theta(\rho^\varepsilon)\big) \big( \eta_{mu}^\zeta(\rho^\varepsilon,m^\varepsilon)u_x^\varepsilon + \eta_{m\rho}^\zeta(\rho^\varepsilon,m^\varepsilon)\rho_x^\varepsilon\big)&\nonumber\\
&\hspace{25mm}-\eta_m^\zeta(\rho^\varepsilon,m^\varepsilon)\big( \frac{\beta}{2}|\mathbf{h}^\varepsilon|^2-\alpha g'(1/\rho^\varepsilon)h(|\psi^\varepsilon|^2)\big)_x.\label{entrmeasalm}
\end{flalign}

Let us show that all the terms on the RHS tend to zero as $\varepsilon\to 0$ in the sense of distributions, except possibly for the term $\varepsilon \eta_m^\zeta(\rho^\varepsilon,m^\varepsilon) |u_x^\varepsilon|^2$, which turns out to be nonpositive anyway.

From \eqref{entropexpl}, given any $\zeta\in C^2(\mathbb{R})$ we have
\[
\eta_{m}^\zeta(\rho,\rho u)=\int_{-1}^1\zeta'(u+\rho^\vartheta s)[1-s^2]_+^\Lambda ds.
\]

Hence
\[
\eta_{mu}^\zeta(\rho,\rho u)=\int_{-1}^1\zeta''(u+\rho^\vartheta s)[1-s^2]_+^\Lambda ds,
\]
and also
\[
\eta_{m\rho}^\zeta=(\rho,\rho u)=\vartheta\rho^{\vartheta-1}\int_{-1}^1\zeta''(u+\rho^\vartheta s)s[1-s^2]_+^\Lambda ds.
\]

Take $\zeta(s)\in \{ \pm 1, \pm s, s^2 \}$. Then, $|\eta_m^\zeta(\rho^\varepsilon,m^\varepsilon)|\leq C(1+|u^\varepsilon|)$, $\eta_{m\rho}^\zeta(\rho^\varepsilon,m^\varepsilon)=0$ and $0\leq\eta_{mu}^\zeta(\rho^\varepsilon,m^\varepsilon)\leq C$ (note that $\int s[1-s^2]_+^\Lambda ds =0$).

Let us recall \eqref{hL1infty} and \eqref{uL1infty}. Then,
\begin{align*}
&\int_0^T\int_\Omega \left| \eta_m^\zeta(\rho^\varepsilon,m^\varepsilon) \left(\frac{\beta}{2}|\mathbf{h}|^2\right)_x \right|dx\hspace{.5mm}ds\leq C\int_0^T\int_\Omega (1+|u^\varepsilon|)\beta|\mathbf{h}^\varepsilon\cdot\mathbf{h}_x^\varepsilon|dx\hspace{.5mm}ds\\
   &\hspace{10mm}\leq C\left( 1+\left( \int_0^T\int_\Omega |u_x^\varepsilon|^2 dx\hspace{.5mm}ds \right)^{1/2} \right)\left( \beta \int_0^T\int_\Omega |\mathbf{h}^\varepsilon\cdot\mathbf{h}_x^\varepsilon|^2 dx\hspace{.5mm}ds \right)^{1/2}\\
   &\hspace{10mm}\leq C\left( 1+\left( \int_0^T\int_\Omega |u_x^\varepsilon|^2 dx\hspace{.5mm}ds \right)^{1/2} \right)\left( \beta^{1/2} \int_0^T\int_\Omega |\mathbf{h}_x^\varepsilon|^2 dx\hspace{.5mm}ds \right)\\
   &\hspace{10mm}\leq C\frac{\beta^{1/2}}{\varepsilon^{1/2}}\left( \varepsilon^{1/2}+\left(\varepsilon \int_0^T\int_\Omega |u_x^\varepsilon|^2 dx\hspace{.5mm}ds \right)^{1/2}\right)\\
   &\hspace{10mm}\leq C\frac{\beta^{1/2}}{\varepsilon^{1/2}},
\end{align*}
which tends to zero as $\varepsilon\to 0$.

Similarly, recalling \eqref{gh}, that $\psi_x=\rho \psi_y$ and that the Jacobian of the coordinate change equals $\rho$, from Lemmas \ref{uniform1} and \ref{uniform2} we have
\begin{align*}
&\int_0^T\int_\Omega \left| \eta_m^\zeta(\rho^\varepsilon,m^\varepsilon) \left(\alpha g'(1/\rho^\varepsilon)h(|\psi^\varepsilon|^2)\right)_x \right|dx\hspace{.5mm}ds\\
&\hspace{10mm}\leq C\alpha\int_0^T\int_\Omega(1+|\rho^\varepsilon|^{1/2}|u^\varepsilon|)(|\rho^\varepsilon|^{\frac{\gamma-3}{2}}|\rho_x^\varepsilon|+|\psi_x^\varepsilon g'(1/\rho^\varepsilon)|)dx\hspace{.5mm}ds\\
&\hspace{10mm}\leq C\frac{\alpha}{\varepsilon^{1/2}},
\end{align*}
which also tends to zero as $\varepsilon\to 0$.

Next, using \eqref{EMtetint}
\begin{align*}
&\int_0^T\int_\Omega |\delta \theta^\varepsilon p_\theta(\rho^\varepsilon)(\eta_{mu}^\zeta(\rho^\varepsilon,m^\varepsilon)u_x^\varepsilon + \eta_{m\rho}^\zeta(\rho^\varepsilon,m^\varepsilon)\rho_x   )|dxds\\
&\hspace{10mm}\leq C\frac{\delta}{\varepsilon^{1/2}}\left(\int_0^T M_\theta(s)^2\int_\Omega \rho^\gamma dx ds\right)^{1/2} \left( \varepsilon\int_0^T\int_\Omega u_x^2 dxds \right)^{1/2}\\
&\hspace{10mm}\leq C\frac{\delta}{\varepsilon^{1/2}}
\end{align*}
which, tends to zero as well.

Finally, by the same token we have that
\[
\eta_m^\zeta(\rho^\varepsilon,m^\varepsilon)(\varepsilon u_x^\varepsilon - \delta \theta^\varepsilon p_\theta(\rho^\varepsilon))\to 0
\]
in the sense of distributions. Thus, taking $\varepsilon\to 0$ in \eqref{entrmeasalm} we obtain \eqref{desigentr}.

Let us now prove \eqref{commutator}. In view of Lemmas \ref{uniform3} and \ref{uniform4} and Proposition \ref{etaqcomp}, we can apply the celebrated Div-Curl Lemma (see \cite{Mu,T,T'}) in order to conclude that for any $C^2$ compactly supported functions $\zeta$ and $\phi$ we have
\[
\overline{\eta^\zeta q^\phi} - \overline{\eta^\phi q^\zeta} = \overline{\eta^\zeta}\hspace{1.5mm} \overline{q^\phi} - \overline{\eta^\phi}\hspace{1.5mm} \overline{q^\zeta}.
\]

Consequently,
\begin{flalign*}
&\int\zeta(\xi_1)\overline{\chi(\xi_1)}d\xi_1\int \phi(\xi_2)\overline{\vartheta\xi_2+(1-\vartheta)u)\chi(\xi_2)}\hspace{.5mm}d\xi_2 &\\
&\hspace{10mm}-\int\phi(\xi_2)\overline{\chi(\xi_2)}d\xi_2\int \zeta(\xi_1)\overline{\vartheta\xi_1+(1-\vartheta)u)\chi(\xi_1)}\hspace{.5mm}d\xi_1&\\
&\hspace{20mm}=\int \zeta(\xi_1)\phi(\xi_2) \overline{\chi(\xi_1)(\vartheta\xi_2+(1-\vartheta)u)\chi(\xi_2)}d\xi_1 d\xi_2 &\\
&\hspace{30mm}-\int \zeta(\xi_1)\phi(\xi_2) \overline{(\vartheta\xi_1+(1-\vartheta)u)\chi(\xi_1)\chi(\xi_2)}d\xi_1 d\xi_2. &
\end{flalign*}

As this holds for any $\zeta$ and $\phi$ we have
\begin{flalign*}
&\overline{\chi(\xi_1)}\hspace{1mm}\overline{\vartheta\xi_2+(1-\vartheta)u)\chi(\xi_2)} - \overline{\chi(\xi_2)}\hspace{1mm}\overline{\vartheta\xi_1+(1-\vartheta)u)\chi(\xi_1)}\\
&\hspace{70mm}=\vartheta (\xi_2-\xi_1)\overline{\chi(\xi_1)\chi(\xi_2)},&&
\end{flalign*}
which implies \eqref{commutator}.
\end{proof}

As aforementioned , with this proposition a hand, the arguments in \cite{CP,C,DC} to reduce the Young measures apply and we have shown the first part of Theorem \ref{vanish}.

\subsection{Limit for the transverse velocity, magnetic field and wave function}

Having strong convergence of the sequence $(\rho^\varepsilon,\rho^\varepsilon u^\varepsilon)$, the passage to the limit in equations \eqref{E1w} and \eqref{E1h} becomes a straightforward exercise. As pointed out before, the uniform estimates in Lemma \ref{uniform1} and the fact that we are leaving $\mu$ and $\nu$ fixed independently of $\varepsilon$, imply that $\beta \mathbf{h}^\varepsilon$ and $\beta \mathbf{w}^\varepsilon$ tend to zero in $L^2(\Omega\times(0,T))$ and $\beta u^\varepsilon\mathbf{h}^\varepsilon$ tends to zero in $L^1(\Omega\times(0,T))$. Accordingly, we have that $\mathbf{h}_{xx}^\varepsilon$ in the sense of distributions. Nonetheless, we also have a uniform bound for the $L^2(0,T;H_0^1(\Omega))$ for $\mathbf{h}^\varepsilon$ so that we can assume that it converges to some limit $\mathbf{h}$ weakly in $L^2(0,T;H_0^1(\Omega))$. This implies that, necessarily, $\mathbf{h}=0$ and the limit equation is satisfied trivially. Thus, for consistency, we demand that the initial data $\mathbf{h}_0^\varepsilon$ satisfies $\beta \mathbf{h}_0^\varepsilon\to 0$ in the sense of distributions and drop equation \eqref{E1hinfty}.

Moving on to equation \eqref{E1w}, the uniform estimates from Lemma \ref{uniform1} imply that $\mathbf{w}^\varepsilon$ has a subsequence (not relabelled) that converges to some limit $\mathbf{w}$ weakly in $L^2(0,T;H_0^1(\Omega))$. Since $(\rho^\varepsilon,\rho^\varepsilon u^\varepsilon)$ converges strongly to $(\rho,\rho u)$ we have that $\rho^\varepsilon \mathbf{w}^\varepsilon$ and $\rho^\varepsilon u^\varepsilon \mathbf{w}^\varepsilon$ converge to $\rho \mathbf{w}$ and $\rho u\mathbf{w}$, respectively, in the sense of distributions. As $\beta \mathbf{h}$ converges strongly to zero, we have that the limit functions $\rho$, $\rho u$ and $\mathbf{w}$ solve the limit equation \eqref{E1winfty}.

Regarding the initial data for the transverse velocity, we demand that $\rho_0^\varepsilon\mathbf{w}_0^\varepsilon$ converge to some limit $\rho_0 \mathbf{w_0}$ in the sense of distributions. Note that we specify the initial data for the limit equation in terms of the transverse momentum as vacuum is unavoidable in the limit functions. Accordingly, we have that the initial data is attained in the sense of distributions through the weak formulation of \eqref{E1winfty}:
\[
\int_0^t\int_\Omega \rho \mathbf{w}\varphi_tdx ds -\int_\Omega \rho_0\mathbf{w}_0\varphi|_{t=0}ds  -\int_0^t\int_\Omega\rho u\mathbf{w} \varphi_xdx ds =-\int_0^t\int_\Omega \mu \mathbf{w}_x\varphi_xdxds
\]
for any $\varphi\in C^\infty(\Omega\times([0,\infty))$.

Lastly, the passage to the limit in the nonlinear Schr\"{o}dinger equation is a direct consequence of Aubin-Lions Lemma, as explained in Section \ref{limit}. For consistency, we assume that the initial data $\psi_0^\varepsilon$ converges to $\psi_0$ in $H_0^1(\Omega_y)$, thereby concluding that $\psi^\varepsilon$ converges to the unique solution of the limit nonlinear Schr\"{o}dinger equation \eqref{E1Schinfty}.

We have thus proved the second and third parts of Theorem \ref{vanish}.

To conclude, we move on to discussing the limit passage in the thermal energy equation \eqref{E1Q}.

\subsection{Limit in the thermal energy equation}

As explained in Section \ref{limit} the limit process in the thermal energy equation \eqref{E1Q} is not straightforward on account of the nonlinearities. Also, the loss of regularity of the longitudinal velocity $u$ forces us to consider the inequality \eqref{E1Qineq} instead of \eqref{E1Qinfty}.

In order to justify the passage to the limit, we adapt some ideas in \cite{Fe}.

First, we observe that estimate \eqref{uniftetq} implies that $Q(\theta^\varepsilon)$ is uniformly bounded in $L^2(0,T;H^1(\Omega))$ and hence we can assume that 
\[
Q(\theta^\varepsilon)\rightharpoonup \overline{Q}\text{ weakly in }L^2(0,T;H^1(\Omega)).
\]

By the same token, $Q(\theta^\varepsilon)$ is uniformly bounded in $L^2(0,T;L^\infty(\Omega))$ which, in light of Lemma \ref{uniform1}, implies that $\rho^\varepsilon Q(\theta^\varepsilon)$ is uniformly bounded in the space $L^2(0,T;L^\gamma(\Omega))$.

Now, from equation \eqref{E1Q} and using Lemmas \ref{uniform1} and \ref{uniform5} we can easily see that $(\rho^\varepsilon Q(\theta^\varepsilon))_t$ is uniformly bounded in the space $L^1(0,T;H^{-3}(\Omega))$ (recall that $(\kappa \theta_x)_x$ can be written as $\mathcal{K}_{xx}$ and that by Lemma \ref{uniform5} $\mathcal{K}$ is uniformly bounded in $L^1(\Omega\times(0,T))$). With this, a direct application of Aubin-Lions lemma (see \cite{JLi,Au,Si}) and the fact that $\rho^\varepsilon$ converges strongly to $\rho$ imply that 
\[
\rho^\varepsilon Q(\theta^\varepsilon)\to\rho\hspace{.3mm} \overline{Q} \text{ strongly in } L^2(0,T;H^{-1}(\Omega)).
\]

We claim that
\begin{equation}
\int_0^T\int_\Omega \rho^\varepsilon Q(\theta^\varepsilon)^2\varphi dx ds\to \int_0^T\int_\Omega \rho\overline{Q}^2\varphi dx ds,\label{rhoQ2}
\end{equation}
as $\varepsilon\to 0$, for \textbf{any} $\varphi\in C_0^\infty(\Omega\times(0,T))$. Indeed, we have
\begin{align*}
&\int_0^T\int_\Omega (\rho^\varepsilon Q(\theta^\varepsilon)^2 -\rho\overline{Q}^2)\varphi dxds\\
&\hspace{2mm}\leq \int_0^T\int_\Omega (\rho^\varepsilon Q(\theta^\varepsilon) -\rho\overline{Q})(Q(\theta^\varepsilon)+\overline{Q})\varphi dxds + \int_0^T\int_\Omega Q(\theta^\varepsilon)\overline{Q}(\rho^\varepsilon-\rho)\varphi dxds.
\end{align*}

On the one hand we have
\begin{align*}
&\int_0^T\int_\Omega (\rho^\varepsilon Q(\theta^\varepsilon) -\rho\overline{Q})(Q(\theta^\varepsilon)+\overline{Q})\varphi dxds \\
&\leq \int_0^T \Vert (\rho^\varepsilon Q(\theta^\varepsilon)-\rho \overline{Q})(s)\Vert_{H^{-1}(\Omega)}\Vert (Q(\theta^\varepsilon)+\overline{Q})(s)\varphi\Vert_{H_0^1(\Omega)}ds\\
 &\leq  C_\varphi \int_0^T \Vert (\rho^\varepsilon Q(\theta^\varepsilon)-\rho \overline{Q})(s)\Vert_{H^{-1}(\Omega)}\big(\Vert Q(\theta^\varepsilon)(s)\Vert_{H^1(\Omega)}+\Vert \overline{Q}(s)\Vert_{H^1(\Omega)}\big)ds\\
  &\leq C_\varphi \Vert \rho^\varepsilon Q(\theta^\varepsilon)-\rho \overline{Q}\Vert_{L^2(0,T;H^{-1}(\Omega))}\times\\
  &\hspace{40mm}\times\big(\Vert Q(\theta^\varepsilon)(s)\Vert_{L^2(0,T;H^1(\Omega))}+\Vert \overline{Q}(s)\Vert_{L^2(0,T;H^1(\Omega))}\big),
\end{align*}
which tends to zero as $\varepsilon\to 0$.

On the other hand,
\[
\int_0^T\int_\Omega Q(\theta^\varepsilon)\overline{Q}(\rho^\varepsilon-\rho)\varphi dxds \to 0
\]
by the dominated convergence theorem (recall that $\rho^\varepsilon\to \rho$ a.e. in $\Omega\times (0,T)$), thus proving the claim.

Now, from \eqref{rhoQ2} we have that 
\begin{flalign*}
&\int_0^T\int_\Omega \rho Q(\theta^\varepsilon)\varphi dx ds=\int_0^T\int_\Omega (\rho-\rho^\varepsilon) Q(\theta^\varepsilon)\varphi dx ds + \int_0^T\int_\Omega \rho^\varepsilon Q(\theta^\varepsilon)\varphi dx ds\\
&\hspace{32mm}\to \int_0^T\int_\Omega \rho \overline{Q}\varphi dx ds.&&
\end{flalign*}
also by the dominated convergence theorem.

This can be interpreted as convergence of norms in a weighted $L_{\rho \varphi}^2$ space. In particular, we have
\begin{equation}
Q(\theta^\varepsilon)\to \overline{Q} \text{ a.e. in }\{ (x,t)\in \Omega\times(0,T):\rho(x,t)>0 \}.\label{convQ}
\end{equation}

Since $Q$ is strictly increasing (recall our hypotheses \eqref{Q}) we can define $\overline{\theta}:=Q^{-1}(\hspace{.5mm}\overline{Q}\hspace{.5mm})$ and we have that
\begin{align*}
0&=\lim_{\varepsilon\to 0}\int_0^T\int_\Omega (Q(\theta^\varepsilon)-\overline{Q})(\theta^\varepsilon-\overline{\theta})\mathbbm{1}_{\{\rho>0\}} dxds\\
 &\geq C^{-1}\int_0^T\int_\Omega(\theta^\varepsilon-\overline{\theta})^2\mathbbm{1}_{\{\rho>0\}}dxds,
\end{align*}
and hence,
\[
\theta^\varepsilon\to\overline{\theta} \text{ in }L^2(\{ \rho>0 \}).
\]

This last bit of information guarantees that we can pass to the limit in the first two terms of equation \eqref{E1Q} (remember that $\rho^\varepsilon u^\varepsilon\to \rho u$ strongly). Regarding the third term in that equation, we are assuming that there is a coefficient $\delta=o(\varepsilon)$ multiplying it, and by the estimates in Lemma \ref{uniform1} it converges to zero in the sense of distributions. 

All there is left to do, then, is justify the passage to the limit in the second order term on the right hand side. For this we need the following lemma (see \cite[Proposition 2.1]{Fe}).
\begin{lemma}\label{lemmaL1weak}
Let $O\subseteq \mathbb{R}^M$ be a bounded open set. Let $\{v_n\}_{n=1}^\infty$ be a sequence of measurable functions,
\[
v_n:O\to \mathbb{R}^N,
\]
such that 
\[
\sup_{n\geq 1}\int_O \Phi(|v_n|)d\xi < \infty
\]
for a certain continuous function $\Phi:[0,\infty)\to[0,\infty)$.

Then, there exists a subsequence (not relabelled) such that 
\[
\zeta(v_n)\to \overline{\zeta(v)} \text{ weakly in }L^1(O)
\]
for all continuous functions $\zeta:\mathbb{R}^N\to\mathbb{R}$ satisfying
\[
\lim_{|\mathbf{z}|\to\infty}\frac{\zeta(\mathbf{z})}{\Phi(\mathbf{z})}=0.
\]
\end{lemma}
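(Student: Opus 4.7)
The plan is to prove the lemma in two stages: first, to establish that for each admissible continuous $\zeta$ the family $\{\zeta(v_n)\}$ is equiintegrable in $L^1(O)$, so that the Dunford--Pettis theorem furnishes weak relative compactness in $L^1(O)$; and second, to appeal to the fundamental theorem on Young measures in order to extract a single subsequence (not relabelled) that works simultaneously for every admissible $\zeta$.

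For the equiintegrability, fix $\epsilon>0$ and an admissible continuous $\zeta$. The growth hypothesis furnishes $K=K(\zeta,\epsilon)>0$ such that
$$|\zeta(z)|\le \epsilon\,\Phi(|z|)\quad \text{whenever } |z|\ge K,$$
and I set $M_K:=\max_{|z|\le K}|\zeta(z)|$, which is finite by continuity. Writing $C_0:=\sup_n\int_O \Phi(|v_n|)\,d\xi<\infty$, for any threshold $L>M_K$ the set $\{|\zeta(v_n)|>L\}$ is contained in $\{|v_n|>K\}$, whence
$$\int_{\{|\zeta(v_n)|>L\}}|\zeta(v_n)|\,d\xi \;\le\; \epsilon\int_{\{|v_n|>K\}}\Phi(|v_n|)\,d\xi \;\le\; \epsilon\, C_0 .$$
Since $O$ is bounded, the same splitting also gives a uniform $L^1$-bound $\sup_n\|\zeta(v_n)\|_{L^1(O)}\le M_K|O|+\epsilon C_0$. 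Hence $\{\zeta(v_n)\}$ is bounded and equiintegrable in $L^1(O)$ and, by Dunford--Pettis, relatively weakly compact there.

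To pass to a subsequence independent of $\zeta$, I would invoke the fundamental theorem on Young measures. The sequence $\{v_n\}$ is tight: by Chebyshev, $|\{|v_n|>\lambda\}|\le C_0 /\min_{t\ge\lambda}\Phi(t)$, which tends to zero as $\lambda\to\infty$ provided $\Phi$ is coercive (the non-coercive case is trivial since then every admissible $\zeta$ must already be bounded and vanish at infinity, reducing the statement to the $L^\infty$-weak-$*$ setting). Thus a subsequence generates a Young measure $\{\nu_\xi\}_{\xi\in O}$ on $\mathbb{R}^N$; defining $\overline{\zeta(v)}(\xi):=\int_{\mathbb{R}^N}\zeta\,d\nu_\xi$, the Young-measure representation together with the equiintegrability upgrades the generic biting convergence to genuine weak $L^1$-convergence
$$\zeta(v_n)\rightharpoonup \overline{\zeta(v)}\quad\text{in } L^1(O)$$
for every admissible $\zeta$, along the same subsequence.

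The easy part is the uniform integrability, which drops out of the growth hypothesis by a one-line truncation. The main obstacle is securing a single subsequence that serves every admissible test function at once; I expect to resolve this via the Young-measure machinery as above. Should one wish to bypass Young measures, the alternative is to choose a countable dense family $\{\zeta_k\}$ in a separable subspace adapted to the growth (for instance, continuous functions on the one-point compactification of $\mathbb{R}^N$ after dividing by $1+\Phi(|\cdot|)$), diagonalise to obtain a subsequence realizing all $\zeta_k$, and then invoke equiintegrability plus a density argument to extend the convergence to every admissible $\zeta$.
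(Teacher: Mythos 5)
The paper does not actually prove this lemma: it is quoted verbatim from Feireisl's book (cited there as Proposition 2.1), so there is no in-paper argument to compare against. Your proof is the standard one and is essentially correct: the truncation estimate $\int_{\{|\zeta(v_n)|>L\}}|\zeta(v_n)|\,d\xi\leq \epsilon\,C_0$ for $L>M_K$ correctly yields uniform integrability and hence, via Dunford--Pettis, weak $L^1$-compactness of $\{\zeta(v_n)\}$ for each admissible $\zeta$; and the fundamental theorem on Young measures (Ball's theorem, which the paper already invokes elsewhere via \cite{B}) gives a single subsequence and the representation $\overline{\zeta(v)}(\xi)=\int \zeta\,d\nu_\xi$, with the equiintegrability upgrading weak-$*$ $L^\infty$ convergence of truncations to weak $L^1$ convergence. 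One genuine imprecision: your dismissal of the non-coercive case is wrong as stated --- if $\liminf_{t\to\infty}\Phi(t)<\infty$ but $\Phi$ oscillates, an admissible $\zeta$ need \emph{not} be bounded (it only has to be small where $\Phi$ is small), so the reduction to the ``$L^\infty$-weak-$*$ setting'' fails. The cleaner way to avoid tightness altogether is the truncation you mention in your last paragraph: writing $\zeta_R=\chi_R\zeta$ with $\chi_R$ a cutoff, one has $\sup_n\|\zeta(v_n)-\zeta_R(v_n)\|_{L^1(O)}\leq C_0\sup_{|z|\geq R}|\zeta(z)|/\Phi(|z|)\to 0$, and $\zeta_R\in C_0(\mathbb{R}^N)$ is handled by the Young-measure (or a countable-dense-family diagonalization) step directly; this closes the gap and also shows the coercivity discussion is unnecessary. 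In the paper's actual application $\Phi$ is coercive ($\Phi(z)\sim z^{q+1}$ from Lemma 5.5), so the issue is moot there, but the lemma as stated deserves the more careful argument.
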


Fix $0<\omega <1$ and choose $\zeta:[0,\infty)\to [0,\infty)$ as $\zeta(z)=\frac{1}{(1+z)^\omega}$. Then, multiplying \eqref{E1Q} by $\zeta(\theta^\varepsilon)$ and using equation \eqref{E1rho} we have
\begin{align}
&(\rho^\varepsilon Q_\zeta(\theta^\varepsilon))_t + (\rho^\varepsilon u^\varepsilon Q_\zeta(\theta^\varepsilon))_x +\ \delta \theta^\varepsilon p_\theta(\rho^\varepsilon)\zeta(\theta^\varepsilon)u_x^\varepsilon \nonumber\\
&\hspace{15mm}= (\mathcal{K}_\zeta(\theta^\varepsilon))_{xx} +\frac{\omega\kappa(\theta^\varepsilon)|\theta_x^\varepsilon|^2}{(1+\theta^\varepsilon)^\omega} + \frac{\varepsilon |u_x^\varepsilon|^2+\mu|\mathbf{w}_x^\varepsilon|^2+\nu |\mathbf{h}_x^\varepsilon|^2}{(1+\theta^\varepsilon)^\omega},\label{E1Qzet}
\end{align}
where, $Q_\zeta$ and $\mathcal{K}_\zeta$ are given by
\[
Q_\zeta (\theta):=\int_0^\theta \frac{C_\vartheta(z)}{(1+z)^\omega}dz,\hspace{10mm}\mathcal{K}_\zeta(\theta):=\int_0^\theta \frac{\kappa(z)}{(1+z)^\omega}dz
\]

From the strong convergence of $\rho^\varepsilon$, $\rho^\varepsilon u^\varepsilon$, the strong convergence of $\theta^\varepsilon$ in $\{\rho>0\}$ and the uniform estimates we see that
\[
\begin{rcases}
\rho^\varepsilon Q_\zeta(\theta^\varepsilon)\to \rho Q_\zeta(\overline{\theta})\\
\rho^\varepsilon u^\varepsilon Q_\zeta(\theta^\varepsilon)\to\rho u Q_\zeta(\overline{\theta})\\
\delta \theta^\varepsilon p_\theta(\rho^\varepsilon)\zeta(\theta^\varepsilon)u_x^\varepsilon \to 0
\end{rcases}\text{ weakly in }L^1(\Omega\times (0,T)).
\]

Next, using Lemma \ref{lemmaL1weak} we see that 
\[
\mathcal{K}_\zeta(\theta^\varepsilon)\to \overline{\mathcal{K}}_\zeta \text{ weakly in }L^1(\Omega\times(0,T)),
\]
for some $\overline{\mathcal{K}}_\zeta$ that satisfies
\[
\rho \overline{\mathcal{K}}_\zeta=\rho  \mathcal{K}_\zeta(\overline{\theta}), \text{ on }\Omega\times(0,T).
\]

Now, let $\varphi$ be a test function such that
\begin{equation}
\varphi\geq 0,\hspace{5mm} \varphi\in W^{2,\infty}(\Omega\times(0,T)),\hspace{5mm} \varphi_x|_{\partial \Omega}=0,\hspace{5mm} \text{supp}\varphi\subseteq \overline{\Omega}\times[0,T).\label{test}
\end{equation}

For any such test function we have that
\[
\int_0^T\int_\Omega\frac{\mu|\mathbf{w}_x|^2}{(1+\overline{\theta})^\omega}\varphi dxds\leq\liminf_{\varepsilon\to 0}\int_0^T\int_\Omega\frac{\mu|\mathbf{w}_x^\varepsilon|^2}{(1+\theta^\varepsilon)^\omega}\varphi dxds.
\]

Thus, multiplying \eqref{E1Qzet} by $\varphi$, integrating and taking the limit as $\varepsilon\to 0$ we obtain
\begin{flalign}
&\int_0^T \int_\Omega \big(\rho Q_\zeta(\overline{\theta})\varphi_t + \rho u Q_\zeta(\overline{\theta})\varphi_x + \overline{\mathcal{K}}_\zeta\varphi_{xx}\big) dx ds &\nonumber\\
&\hspace{35mm}\leq -\int_0^T\int_\Omega \frac{\mu|\mathbf{w}_x|^2}{(1+\overline{\theta})^\omega}\varphi dxds -\int_\Omega \rho_0 Q_\zeta(\theta_0)\varphi|_{t=0}dx.&\label{Qzetfrac}
\end{flalign}
For this last term we are assuming that $\rho_0^\varepsilon Q(\theta_0^\varepsilon)\to\rho_0 Q(\theta_0)$.

Now, note that 
\[
\frac{1}{(1+z)^\omega}\nearrow 1, \text{ as }\omega\to 0,
\]
then, using the monotone convergence theorem we see that 
\[
\overline{\mathcal{K}}_\zeta\nearrow\overline{K},
\]
where,
\[
\rho\overline{\mathcal{K}}=\rho \mathcal{K}(\hspace{.3mm}\overline{\theta}\hspace{.3mm}),
\]
and
\[
\int_0^T\int_\Omega \overline{\mathcal{K}}dxds \leq \liminf_{\varepsilon\to 0}\int_0^T\int_\Omega \mathcal{K}(\theta^\varepsilon)dxds.
\]

Finally, we can define $\theta:=\mathcal{K}^{-1}(\overline{\mathcal{K}})$ and take the limit as $\omega\to 0$ in \eqref{Qzetfrac} in order to conclude that the nonnegative function $\theta$ satisfies
\begin{flalign}
&\int_0^T \int_\Omega \big(\rho Q(\theta)\varphi_t + \rho u Q(\theta)\varphi_x + \mathcal{K}(\theta)\varphi_{xx}\big) dx ds &\nonumber\\
&\hspace{40mm}\leq -\int_0^T\int_\Omega \mu|\mathbf{w}_x|^2\varphi dxds -\int_\Omega \rho_0 Q(\theta_0)\varphi|_{t=0}dx,&\label{E1Qinftyineq}
\end{flalign}
for any test function that satisfies \eqref{test}; which is the weak formulation of inequality \eqref{E1Qineq}.

Finally, let us show that \eqref{E1Einftyweak} holds. From the energy identity \eqref{difE1} we have
\begin{flalign*}
&\int_\Omega \left( \rho^\varepsilon\left(e(\rho^\varepsilon,\theta^\varepsilon) +\tfrac{1}{2}|u^\varepsilon|^2 + \tfrac{1}{2}|\mathbf{w}^\varepsilon|^2  \right)+\frac{\beta}{2}|\mathbf{h}^\varepsilon|^2 \right) dx &\\
&\hspace{30mm}+\int_{\Omega_y}\Big(\alpha g(v^\varepsilon) h(|\psi^\varepsilon|^2) + \tfrac{1}{2} |\psi_y^\varepsilon|^2 + \tfrac{1}{4}|\psi^\varepsilon|^4\Big)  dy&\\
&= \int_\Omega \left( \rho_0^\varepsilon\left(e(\rho_0^\varepsilon,\theta_0^\varepsilon) +\tfrac{1}{2}|u_0^\varepsilon|^2 + \tfrac{1}{2}|\mathbf{w}_0^\varepsilon|^2  \right)+\frac{\beta}{2}|\mathbf{h}_0^\varepsilon|^2 \right) dx &\\
&\hspace{30mm}+\int_{\Omega_y}\Big(\alpha g(v_0^\varepsilon) h(|\psi_0^\varepsilon|^2) + \tfrac{1}{2} |\psi_{0y}^\varepsilon|^2 + \tfrac{1}{4}|\psi_0^\varepsilon|^4\Big)dy.&
\end{flalign*}

By assumption, the right hand side tends to 
\begin{flalign*}
& \int_\Omega \rho_0\left(e(\rho_0,\theta_0) +\tfrac{1}{2}|u_0|^2 + \tfrac{1}{2}|\mathbf{w}_0|^2  \right)  dx +\int_{\Omega_y}\Big(\tfrac{1}{2} |\psi_{0y}|^2 + \tfrac{1}{4}|\psi_0|^4\Big)dy.&
\end{flalign*}

By lower semi-continuity we have that 
\[
\int_\Omega \left( \tfrac{1}{2}\rho |\mathbf{w}|^2  \right)dx\leq\int_\Omega \left( \tfrac{1}{2}\rho^\varepsilon |\mathbf{w}^\varepsilon|^2  \right)dx
\]
and by strong convergence in $\Omega\times(0,T)$ we have that 
\begin{flalign*}
& \int_\Omega \rho\left(e(\rho,\theta) +\tfrac{1}{2}|u|^2 + \tfrac{1}{2}|\mathbf{w}|^2  \right)(t)  dx +\int_{\Omega_y}\Big(\tfrac{1}{2} |\psi_{y}|^2 + \tfrac{1}{4}|\psi|^4\Big)(t)dy.&\\
&\hspace{10mm}\leq \int_\Omega \rho_0\left(e(\rho_0,\theta_0) +\tfrac{1}{2}|u_0|^2 + \tfrac{1}{2}|\mathbf{w}_0|^2  \right)  dx +\int_{\Omega_y}\Big(\tfrac{1}{2} |\psi_{0y}|^2 + \tfrac{1}{4}|\psi_0|^4\Big)dy.&
\end{flalign*}
for a.e. $t\in(0,T)$.

Finally, we recall that the unique solution of the nonlinear Schr\"{o}dinger equation has conservation of energy:
\[
\int_{\Omega_y}\Big(\tfrac{1}{2} |\psi_{y}|^2 + \tfrac{1}{4}|\psi|^4\Big)(t)dy=\int_{\Omega_y}\Big(\tfrac{1}{2} |\psi_{0y}|^2 + \tfrac{1}{4}|\psi_0|^4\Big)dy
\]
which implies \eqref{E1Einftyweak}, thus concluding the proof of Theorem \ref{vanish}.

\section*{Acknowledgements}
D.R.~Marroquin thankfully acknowledges the support from CNPq, through grants proc. 150118/2018-0 and poc. 140375/2014-7.

\section*{References}


\end{document}